\def\R{{\mathbb{R}}}
\def\Z{{\mathbb{Z}}}
\newcommand{\PP}{\mathbb{P}}
\newcommand{\TT}{\hat{\tau}}
\newcommand{\EE}{\mathbb{E}}
\newcommand{\E}{\mathbb{E}}
\renewcommand{\P}{\mathbb{P}}
\theoremstyle{usual}
\newtheorem{theorem}{Theorem}[section]
\newtheorem{corollary}{Corollary}[section]
\newtheorem{lemma}{Lemma}[section]
\newtheorem{proposition}{Proposition}[section]
\newtheoremstyle{likedef}
  {}%
  {}%
  {}%
  {\parindent}%
  {\bfseries}%
  {.}%
  {.5em}%
  {}%
\theoremstyle{likedef}
\newtheorem{remark}{Remark}
\numberwithin{equation}{section}
\begin{document}

\title{Outlets of 2D invasion percolation and multiple-armed incipient infinite clusters}

\author{Michael Damron\thanks{Mathematics Department, Princeton University, Fine Hall, Washington Rd., Princeton, NJ 08544. Email: mdamron@math.princeton.edu;
Research funded by an NSF Postdoctoral Fellowship}
\and
Art\"{e}m Sapozhnikov\thanks{EURANDOM, P.O. Box 513, 5600 MB Eindhoven, The Netherlands. Email: sapozhnikov@eurandom.tue.nl;
Research partially supported by the Netherlands Organisation for Scientific Research (NWO) under grant number 613.000.429.}
}
\date{November 2009}
\maketitle

\footnotetext{MSC2000: Primary 60K35, 82B43.}
\footnotetext{Keywords: Invasion percolation; invasion ponds; critical percolation; near critical percolation;
correlation length; scaling relations; incipient infinite cluster.}

\begin{abstract}
We study invasion percolation in two dimensions, focusing on properties of the outlets of the invasion and their relation to critical percolation and to incipient infinite clusters (IIC's).  First we compute the exact decay rate of the distribution of both the weight of the $k^{th}$ outlet and the volume of the $k^{th}$ pond.  Next we prove bounds for all moments of the distribution of the number of outlets in an annulus.  This result leads to almost sure bounds for the number of outlets in a box $B(2^n)$ and for the decay rate of the weight of the $k^{th}$ outlet to $p_c$.  We then prove existence of multiple-armed IIC measures for any number of arms and for any color sequence which is alternating or monochromatic.  We use these measures to study the invaded region near outlets and near edges in the invasion backbone far from the origin.
\end{abstract}

\section{Introduction}\label{secIntroduction}

\subsection{The model}\label{model}

Invasion percolation is a stochastic growth model both introduced and numerically studied independently by \cite{Chandler} and \cite{Lenormard}.
Let $G = (V,E)$ be an infinite connected graph in which a distinguished vertex, the origin, is chosen.
Let $(\tau_e)_{e\in E}$ be independent random variables, uniformly distributed on $[0,1]$.  The
{\it invasion percolation cluster} (IPC) of the origin on $G$ is defined as
the limit of an increasing sequence $(G_n)$ of connected subgraphs of $G$ as follows.
For an arbitrary subgraph $G' = (V',E')$ of $G$, we define the outer edge boundary of $G'$ as
\begin{eqnarray*}
\Delta G'=\{e=\langle x,y\rangle\in E\ :\ e\notin
E'\mathrm{,\ but\ }x\in V'\mathrm{\ or\ }y\in V'\}.
\end{eqnarray*}
We define $G_0$ to be the origin.
Once the graph $G_i=(V_i,E_i)$ is defined, we select the edge $e_{i+1}$ that minimizes $\tau$ on $\Delta G_i$.
We take $E_{i+1}=E_i\cup\{e_{i+1}\}$ and let $G_{i+1}$ be the graph induced by the edge set $E_{i+1}$.
The graph $G_i$ is called the \textit{invaded region} at time $i$.
Let $E_\infty = \cup_{i=0}^\infty E_i$ and $V_\infty = \cup_{i=0}^\infty V_i$.  Finally, define the IPC 
\[{\cal S} = (V_\infty, E_\infty).\]

In this paper, we study invasion percolation on two-dimensional lattices; however, for simplicity {\it we restrict ourselves hereafter to the square lattice} ${\Z}^2$ and denote by ${\E}^2$ the set of nearest-neighbour edges.  The results of this paper still hold for lattices which are invariant under reflection in one of the coordinate axes and under rotation around the origin by some angle. In particular, this includes the triangular and honeycomb lattices.

We define Bernoulli percolation using the random variables $\tau_e$ to make a coupling with the invasion immediate.
For any $p\in[0,1]$ we say that an edge $e\in{\mathbb E}^2$ is $p$-open if $\tau_e< p$ and $p$-closed otherwise.
It is obvious that the resulting random graph of $p$-open edges has the same distribution as
the one obtained by declaring each edge of ${\mathbb E}^2$ open with probability $p$ and closed with probability $1-p$,
independently of the state of all other edges.
The percolation probability $\theta(p)$ is the probability that the origin is in the infinite cluster of $p$-open edges.
There is a critical probability $p_c = \inf\{p~:~\theta(p)>0\}\in(0,1)$.
For general background on Bernoulli percolation we refer the reader to \cite{Grimmett}.

The first mathematically rigorous study of invasion percolation appeared in \cite{Newman}.
In particular, the first relations between invasion percolation and critical Bernoulli percolation were observed.  It was shown that, for any $p>p_c$, the invasion on $(\Z^d,\E^d)$ intersects the infinite $p$-open cluster with probability one.
In the case $d=2$ this immediately follows from the Russo-Seymour-Welsh theorem (see Section~11.7 in \cite{Grimmett}).
This result has been extended to much more general graphs in \cite{HPS}.
Furthermore, the definition of the invasion mechanism implies that if the invasion reaches the $p$-open infinite cluster for some $p$,
it will never leave this cluster.
Combining these facts yields that if $e_i$ is the edge added at step $i$ then $\limsup_{i\to\infty}\tau_{e_i}=p_c$.
It is well-known that for Bernoulli percolation on $(\Z^2,\E^2)$, the percolation probability at $p_c$ is $0$.
This implies that, for infinitely many values of $i$, the weight $\tau_{e_i}>p_c$.
The last two results give that $\TT_1=\max\{\tau_e~:~e\in E_\infty\}$ exists and is greater than $p_c$.
The above maximum is attained at an edge which we shall call $\hat e_1$.
Suppose that $\hat{e}_1$ is invaded at step $i_1$, i.e. $\hat e_1 = e_{i_1}$.
Following the terminology of \cite{Newman-Stein}, we call the graph $G_{i_1-1}$ the {\it first pond} of the invasion, denoting it by the symbol $\hat V_1$, and we call the edge $\hat{e}_1$ the {\it first outlet}.
The second pond of the invasion is defined similarly.
Note that a simple extension of the above argument implies that $\TT_2=\max\{\tau_{e_i}~:~e_i\in E_\infty,i>i_1\}$ exists and is greater than $p_c$.
If we assume that $\TT_2$ is taken on the edge $\hat{e}_2$ at step $i_2$,
we call the graph $G_{i_2-1}\setminus G_{i_1-1}$ the {\it second pond} of the invasion, and we denote it $\hat V_2$.
The edge $\hat e_2$ is called the {\it second outlet}.
The further ponds $\hat V_k$ and outlets $\hat e_k$ are defined analogously.
For a hydrological interpretation of the ponds we refer the reader to \cite{pond}.

Various connections between the invasion percolation and the critical Bernoulli percolation have been established
in \cite{pond}, \cite{Newman}, \cite{DSV}, \cite{Jarai}, \cite{Wilkinson}, and \cite{Zhang},
using both heuristics and rigorous arguments.
In the remainder of this section we review results concerning these relations.
Afterward, we will state the main results of the paper.

In \cite{Newman},
 it was shown that the empirical distribution of the $\tau$ value of the invaded edges converges to the uniform distribution on $[0,p_c]$.
The authors also showed that the invaded region has zero volume fraction,
given that there is no percolation at criticality, and that it has surface to volume ratio $(1-p_c)/p_c$.
This corresponds to the asymptotic surface to volume ratio for large critical clusters (see \cite{Newman-Schulman} and \cite{Kunz-Souillard}).
The above results indicate that a large proportion of the edges in the IPC belongs to big $p_c$-open clusters.

Very large $p_c$-open clusters are usually referred to as incipient infinite clusters (IIC).
The first mathematical definition of the IIC of the origin in $\Z^2$ was given by Kesten in \cite{KestenIIC}.
We give this definition in Section~\ref{secIICs}.
Relations between the IIC and the IPC in two dimensions were observed in \cite{Jarai,Zhang}.
The scaling of the moments of the number of invaded sites in a box was obtained there, and this turned out to be
the same as the scaling of the corresponding moments for the IIC.
Moreover, it was shown in \cite{Jarai} that far away from the origin the IPC locally coincides with the IIC.
(We give the precise statement in Theorem~\ref{thm1IIC}.)
However, globally the IPC and the IIC are very much different.
Their laws are mutually singular \cite{DSV}.

The diameter, the volume, and the $r$-point function of the first pond of the invasion were studied in \cite{pond,BPSV,DSV}.
It was shown that the decay rates of their distributions coincide respectively with the decay rates of the distributions of
the diameter, the volume, and the $r$-point function of the critical cluster of the origin in Bernoulli percolation.
The diameter of the $k$-th pond for $k\geq 2$ was studied in \cite{DSV}.
It appears that the tail of the distribution of the diameter of the $k$-th pond scales differently for different values of $k$.

There is a rather complete understanding of the IPC on a rooted regular tree.
In \cite{tree} the scaling behaviour of the $r$-point function and the distribution of the volume of the invaded region at and below a given height were explicitly computed. These scalings coincide with the corresponding scalings for the IIC.
Local similarity between the invaded region far away from the root and the IIC has been shown.
It is also true here that the laws of the IPC and the IIC are mutually singular.

In this paper we study the sequence of outlets $(\hat e_k)$ and the sequence of their weights $(\hat\tau_k)$.
In Theorem~\ref{thmTau} we give the asymptotic behaviour for the distribution of $\hat\tau_k$ for any fixed $k$.
For $k>1$, we compute the exact decay rate of the distribution of the size of the $k$-th pond in Theorem~\ref{thmVolumes}.
This result can be also seen as a statement about the sequence of steps $i_k$ at which $\hat e_k$ are invaded.  In Theorem~\ref{outletmomentthm}, we find uniform bounds on all moments of the number of outlets in an annulus.  We use this result in Theorem~\ref{outletboxthm} to derive almost sure bounds on the number of outlets in a box $B(2^n)$.  An important consequence of Theorem~\ref{outletboxthm} is Corollary~\ref{outletsascor}; it states almost sure bounds on the difference $(\hat\tau_k - p_c)$ and on the radii of the ponds.  

In Theorem~\ref{thmIIC} we prove the existence of an IIC with several infinite $p_c$-open and $p_c$-closed paths from a neighbourhood of the origin.
We then show in Theorem~\ref{thm2IIC} and Theorem~\ref{thm4IIC} that the local description of the invaded region near the backbone of the IPC far away
from the origin is given by the IIC with two infinite $p_c$-open paths, and the local description of the invaded region near an outlet of the IPC
far away from the origin is given by the IIC with two infinite $p_c$-open paths and two infinite $p_c$-closed paths so that these paths alternate.  Last, in Theorem~\ref{thmpivotal4IIC}, we see that the same description used in Theorem~\ref{thm4IIC} applies in the setting of critical Bernoulli percolation to the region near an edge which is pivotal for a left-right crossing of a large box, given that this edge is sufficiently far from the boundary of the box.

\subsection{Notation}\label{secNotation}
\noindent
In this section we collect most of the notation and the definitions used in the paper.

For $a\in\R$, we write $|a|$ for the absolute value of $a$, and,
for a site $x = (x_1,x_2)\in\Z^2$, we write $|x|$ for $\max(|x_1|,|x_2|)$.
For $n>0$ and $x\in\Z^2$,
let $B(x,n) = \{y\in\Z^2~:~|y-x|\leq n\}$ and $\partial B(x,n) = \{y\in\Z^2~:~|y-x|=n\}$.
We write $B(n)$ for $B(0,n)$ and $\partial B(n)$ for $\partial B(0,n)$.
For $m<n$ and $x\in\Z^2$, we define the annulus $Ann(x;m,n) = B(x,n)\setminus B(x,m)$.
We write $Ann(m,n)$ for $Ann(0;m,n)$.

We consider the square lattice $(\Z^2,{\mathbb E}^2)$, where ${\mathbb E}^2 = \{\langle x,y \rangle \in\Z^2\times\Z^2~:~|x-y|=1\}$.
Let $(\Z^2)^* = (1/2,1/2) + \Z^2$ and $({\mathbb E}^2)^* = (1/2,1/2) + {\mathbb E}^2$ be the vertices and the edges of the dual lattice.
For $x\in\Z^2$, we write $x^*$ for $x + (1/2,1/2)$.
For an edge $e\in{\mathbb E}^2$ we denote its endpoints (left respectively right or bottom respectively top) by $e_x, e_y\in \Z^2$.
The edge $e^* = \langle e_x + (1/2,1/2),e_y-(1/2,1/2) \rangle$ is called the \textit{dual edge} to $e$.
Its endpoints (bottom respectively top or left respectively right) are denoted by $e_x^*$ and $e_y^*$.
Note that, in general, $e_x^*$ and $e_y^*$ are not the same as $(e_x)^*$ and $(e_y)^*$.
For a subset ${\mathcal K}\subset \Z^2$, let ${\mathcal K}^* = (1/2,1/2) + {\mathcal K}$.
We say that an edge $e\in{\mathbb E}^2$ is in ${\mathcal K}\subset \Z^2$ if both its endpoints are in ${\mathcal K}$.  For any graph ${\cal G}$ we write $|{\cal G}|$ for the number of vertices in ${\cal G}$.

Let $(\tau_e)_{e\in{\mathbb E}^2}$ be independent random variables, uniformly distributed on $[0,1]$, indexed by edges.
We call $\tau_e$ the \textit{weight} of an edge $e$.
We define the weight of an edge $e^*$ as $\tau_{e^*} = \tau_e$.
We denote the underlying probability measure by ${\mathbb P}$ and the space of configurations by $([0,1]^{{\mathbb E}^2},{\mathcal F})$,
where ${\mathcal F}$ is the natural $\sigma$-field on $[0,1]^{{\mathbb E}^2}$.
We say that an edge $e$ is $p$-{\it open} if $\tau_e <p$ and $p$-{\it closed} if $\tau_e > p$.
An edge $e^*$ is $p$-open if $e$ is $p$-open, and it is $p$-closed if $e$ is $p$-closed.  Accordingly, for $p \in [0,1]$, we define the edge configuration $\omega_p \in \{0,1\}^{{\mathbb Z}^2}$ by

\[ \omega_p(e) = \begin{cases} 
1 & \tau_e \leq p \\
0 & \tau_e > p 
\end{cases} \]
and we make a similar definition for $\omega_p^*$, the dual edge configuration.
The event that two sets of sites ${\mathcal K}_1,{\mathcal K}_2\subset\Z^2$ are connected by a $p$-open path is denoted by
${\mathcal K}_1 \stackrel{p}\longleftrightarrow {\mathcal K}_2$, and
the event that two sets of sites ${\mathcal K}_1^*,{\mathcal K}_2^*\subset(\Z^2)^*$ are connected by a $p$-closed path in the dual lattice is denoted by
${\mathcal K}_1^* \stackrel{p^*}\longleftrightarrow {\mathcal K}_2^*$.
For any $n \geq 1$ and $p \in [0,1]$, we define the event
\[
\begin{array}{ccc}
B_{n,p}&=&\{\textrm{There is a }p\textrm{-closed circuit with radius at least } n \textrm{ around the origin in the dual lattice}\}.
\end{array}
\]

For $p\in[0,1]$, we consider a probability space $(\Omega_p,{\mathcal F}_p,{\mathbb P}_p)$, where
$\Omega_p = \{0,1\}^{{\mathbb E}^2}$, ${\mathcal F}_p$ is the $\sigma$-field generated by the finite-dimensional
cylinders of $\Omega_p$, and ${\mathbb P}_p$ is a product measure on $(\Omega_p,{\mathcal F}_p)$, defined as 
${\mathbb P}_p = \prod_{e\in{\mathbb E}^2}\mu_e$, where $\mu_e$ is the probability measure on vectors $(\omega_e)_{e\in{\mathbb E}^2}\in\Omega_p$ with $\mu_e(\omega_e = 1) = 1 - \mu_e(\omega_e = 0) = p$.
We say that an edge $e$ is {\it open} or {\it occupied} if $\omega_e = 1$, and $e$ is {\it closed} or {\it vacant} if $\omega_e = 0$.
We say that an edge $e^*$ is open or occupied if $e$ is open, and it is closed or vacant if $e$ is closed.
The event that two sets of sites ${\mathcal K}_1,{\mathcal K}_2\subset\Z^2$ are connected by an open path
is denoted by ${\mathcal K}_1 \leftrightarrow {\mathcal K}_2$, and
the event that two sets of sites ${\mathcal K}_1^*,{\mathcal K}_2^*\subset\Z^2$ are connected by a closed path in the dual lattice
is denoted by ${\mathcal K}_1^* \stackrel{*}\leftrightarrow {\mathcal K}_2^*$.  For any $n \geq 1$ and $p \in [0,1]$, let
\[ \pi_n={\PP}_{p_c}(0\leftrightarrow \partial B(n)) \textrm{ and } \pi(n,p) = {\PP}_p(0 \leftrightarrow \partial B(n)). \]
Also define the event
\[
\begin{array}{ccc}
B_n&=&\{\textrm{There is a closed circuit with radius at least }n \textrm{ around the origin in the dual lattice}\}.
\end{array}
\]
For any $k \geq 1$, let $\hat R_k$ be the radius of the union of the first $k$ ponds.  In other words,

\[ \hat R_k = \max \{|x|~:~ x \in \cup_{j=1}^k \hat V_k\}. \]
For two functions $g$ and $h$ from a set ${\mathcal X}$ to ${\mathbb R}$,
we write $g(z) \asymp h(z)$ to indicate that $g(z)/h(z)$ is bounded away from $0$ and $\infty$, uniformly in $z\in{\mathcal X}$.
Throughout this paper we write $\log$ for $\log_2$.
We also write ${\mathbb P}_{cr}$ for ${\mathbb P}_{p_c}$.
All the constants $(C_i)$ in the proofs are strictly positive and finite.
Their exact values may be different from proof to proof.

\subsection{Main results}
\noindent

\subsubsection{Weight of the $k^{th}$ outlet}
\noindent

Let $\hat\tau_k$ be the weight of the $k^{th}$ outlet, as defined in Section~\ref{model}.
\begin{theorem}\label{thmTau}
For any $k\geq 1$,
\begin{equation}\label{eqTau}
{\mathbb P}(\hat\tau_k < p)
\asymp
\left(\log L(p)\right)^{k-1} \theta(p),~~~~p>p_c,
\end{equation}
where the correlation length $L(p)$ is defined in Section~\ref{secCL}.
\end{theorem}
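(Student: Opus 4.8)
The plan is to reduce the statement about $\hat\tau_k$ to a statement about the geometry of the ponds and then feed that into standard near-critical percolation estimates, in particular the known behaviour of $\theta(p)$ and of the four-arm and one-arm exponents through the correlation length $L(p)$. First I would recall the case $k=1$: the event $\{\hat\tau_1<p\}$ says that after the origin has invaded, it never needs to accept an edge of weight $\geq p$, which is equivalent to saying that the invasion reaches the infinite $p$-open cluster before accepting such an edge; by the coupling this is essentially the event that the origin lies in the infinite $p$-open cluster, and a Borel--Cantelli / RSW argument (this is exactly the $k=1$ case treated in \cite{pond,BPSV,DSV}) gives ${\mathbb P}(\hat\tau_1<p)\asymp\theta(p)$. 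So the content of the theorem is the extra factor $(\log L(p))^{k-1}$, which should come from summing over the $k-1$ scales at which the successive outlets $\hat e_2,\dots,\hat e_k$ can sit.

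The key mechanism I would exploit is the following decomposition. Conditioned on the first pond $\hat V_1$ and its outlet $\hat e_1$, the invasion restarted from $\partial\hat V_1$ (through $\hat e_1$) behaves like a fresh invasion on the $\hat\tau_1$-open infinite cluster, so $\hat\tau_2$ plays the role of ``$\hat\tau_1$'' for this restarted process. The event $\{\hat\tau_k<p\}$ then forces: there is an outlet $\hat e_1$ with $\hat\tau_1<p$ located at some scale $2^j$, i.e. $p_c<\hat\tau_1<p$ and $\hat e_1\in Ann(2^{j},2^{j+1})$ roughly; and then, restarted, $\hat\tau_{k-1}<p$ for the new invasion. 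Iterating, $\{\hat\tau_k<p\}$ is comparable to the event that there are $k-1$ nested ``pond-boundary'' annuli at scales $2^{j_1}<\dots<2^{j_{k-1}}$, in each of which the relevant outlet edge is $p$-closed-surrounded (a circuit of $\tau$-value $\geq\hat\tau_{i}$ but with $\hat\tau_i<p$) while connections of $\tau$-value $<p$ run through, and finally the innermost invasion reaches the infinite $p$-open cluster. Summing over the choices $j_1<\dots<j_{k-1}\leq\log L(p)$ (beyond scale $L(p)$ the weight $p$ percolates with positive probability uniformly, so no further outlet with weight in $(p_c,p)$ typically occurs) produces the binomial-type factor $\binom{\log L(p)}{k-1}\asymp(\log L(p))^{k-1}$, each term contributing a bounded factor by RSW and the uniformity of crossing probabilities at scale $L(p)$; the last restarted invasion contributes the single factor $\theta(p)$.

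Concretely, the upper bound: I would write $\{\hat\tau_k<p\}\subseteq \bigcup_{j_1<\dots<j_{k-1}}A_{j_1}\cap\dots\cap A_{j_{k-1}}\cap(\text{final connection})$, where $A_{j}$ is the event that scale $2^{j}$ carries a ``potential outlet'' with weight in $(p_c,p)$ — made precise as the existence, in the annulus at scale $2^{j}$, of a $p$-closed dual circuit together with the appropriate open connections to the next scale. Using independence across disjoint annuli, the FKG inequality, and the bound ${\mathbb P}_p(A_j)\leq C$ for $2^{j}\leq L(p)$ together with a gluing estimate that the total product telescopes to $C^{k-1}\theta(p)$, one gets ${\mathbb P}(\hat\tau_k<p)\leq C_k(\log L(p))^{k-1}\theta(p)$. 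For the lower bound I would instead \emph{construct} such a configuration: choose scales $2^{j_i}\asymp 2^{i}$ (so there are order $\log L(p)$ admissible increasing sequences), and on each chosen annulus force a $p$-closed dual circuit surrounding a single accepting edge of weight just above $p_c$ plus open arms connecting the circuits, and on the innermost scale force the origin to connect to infinity through $p$-open edges; RSW gives each such local event a probability bounded below, the innermost one gives $\theta(p)$, independence across annuli multiplies them, and summing the $\asymp(\log L(p))^{k-1}$ disjoint choices of scales (they are disjoint because the configurations live on disjoint annuli) yields the matching lower bound.

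I expect the main obstacle to be making the ``restart'' step rigorous: conditioned on $(\hat V_1,\hat e_1,\hat\tau_1)$, the restarted invasion is \emph{not} literally a fresh invasion in an independent environment — one must condition on $\hat V_1$ being a pond, which imposes that every edge on $\Delta\hat V_1\setminus\{\hat e_1\}$ has weight $\geq\hat\tau_1$, and that the exploration of $\hat V_1$ only revealed edges of weight $<\hat\tau_1$. Controlling this conditioning — showing that it costs only a bounded factor per scale and does not destroy the independence across annuli needed for the product bound — is the delicate part. I would handle it by the usual device of exploring $\hat V_1$ from the outside, noting that the status of edges strictly outside $\hat V_1$ at levels below $\hat\tau_1$ is untouched by the conditioning, and invoking the (by now standard) arm-event separation and gluing lemmas at the correlation length to absorb the boundary effects into the constants; the uniform moment/RSW bounds near $L(p)$, rather than any delicate exponent computation, are what keep every per-scale factor bounded, so the count of scales is exactly $\log L(p)$ and the exponent $k-1$ is correct.
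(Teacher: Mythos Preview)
Your overall architecture --- decompose on the dyadic scales at which the first $k-1$ ponds end, count $\asymp(\log L(p))^{k-1}$ choices, and pick up $\theta(p)$ from the final connection to the $p$-infinite cluster --- matches the paper, and your lower-bound construction (disjoint events $C_{m,p}$ at each scale, each of probability $\asymp\theta(p)$) is essentially what the paper does.

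The upper bound, however, has a real gap, and it is precisely the ``restart'' step you flag. You want to bound ${\mathbb P}(\hat\tau_2<p;\ \hat R_1\in[m,2m])$ by something like $C\,\pi_m\cdot{\mathbb P}_{cr}(B(2m)\leftrightarrow\partial B(L(p)))$, but the event $\{\hat R_1\in[m,2m]\}$ is neither increasing nor decreasing, and the exploration that reveals $\hat V_1$ conditions all boundary edges to have weight $\geq\hat\tau_1$, with $\hat\tau_1$ itself random. Your sketch (``explore from outside, absorb boundary effects into constants'') is not enough: the per-scale cost is \emph{not} obviously bounded, because the $\hat\tau_1$-open arm from $0$ to $\partial B(m)$ can be at a parameter much larger than $p_c$ when $\hat\tau_1$ is near $p_m$, inflating its probability, while the compensating $\hat\tau_1$-closed circuit event is only useful if you know how close $\hat\tau_1$ is to $p_c$.

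The paper does not use any restart. Instead it further partitions on the value of $\hat\tau_1$ via the iterated-logarithm scales $p_m(j)$ (defined by $L(p_m(j))\asymp m/\log^{(j)}m$): on $\{\hat\tau_1\in[p_m(j),p_m(j-1))\}$ one has simultaneously a $p_m(j-1)$-open arm $0\leftrightarrow\partial B(m)$ (increasing) and a $p_m(j)$-closed dual circuit of radius $\geq m$ (decreasing), together with the independent event $B(2m)\stackrel{p}{\leftrightarrow}\partial B(L(p))$. FKG then decouples these cleanly, and the point is that the closed-circuit probability ${\mathbb P}_{p_m(j)}(B_m)\leq C(\log^{(j-1)}m)^{-C_4}$ beats the inflation $(\log^{(j-1)}m)^{1/2}$ of the near-critical one-arm, so the sum over $j$ converges uniformly. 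This $\hat\tau_1$-slicing is the missing technical idea; without it your per-scale bound ``${\mathbb P}(A_j)\leq C$'' is not justified. (A minor slip: you write that $\{\hat\tau_k<p\}$ forces $\hat\tau_1<p$, but the outlet weights are decreasing, $\hat\tau_1>\hat\tau_2>\cdots$, so this is backwards; what you actually need is only the location of $\hat R_1,\ldots,\hat R_{k-1}$ and the connection from $B(2\hat R_{k-1})$ to infinity at level $p$.)
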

\begin{remark}
Note that the statement is trivial in the case $k=1$.
Indeed, it follows from the definition of the invasion that ${\mathbb P}(\hat\tau_1 < p) = \theta(p)$ for all $p$.
\end{remark}

\subsubsection{Volumes of the ponds}
\noindent

Recall the definition of the $k^{th}$ pond $\hat V_k$ from Section~\ref{model} and the definition of $\pi_n$ from Section~\ref{secNotation}.

\begin{theorem}\label{thmVolumes}
For any $k\geq 1$,
\begin{equation}\label{eqVolumes}
{\mathbb P}(|\hat V_k|\geq n^2\pi_n)
\asymp
(\log n)^{k-1} \pi_n,~~~~n\geq 2.
\end{equation}
In particular, 
\begin{equation}\label{eqVolumes1}
{\mathbb P}(|\hat V_k|\geq n)
\asymp
(\log n)^{k-1} {\mathbb P}_{cr}(|C(0)|\geq n),~~~~n\geq 2.
\end{equation}
\end{theorem}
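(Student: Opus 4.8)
\medskip
\noindent\textbf{Proof proposal.}
First I would deduce \eqref{eqVolumes1} from \eqref{eqVolumes} by a change of variables. The scaling relation for the critical cluster size (a consequence of Kesten's scaling relations) gives $\PP_{cr}(|C(0)|\geq n^2\pi_n)\asymp\pi_n$; since $n\mapsto n^2\pi_n$ is comparable to an increasing function with $\log(n^2\pi_n)\asymp\log n$, substituting $m$ with $m^2\pi_m\asymp n$ turns \eqref{eqVolumes} into \eqref{eqVolumes1}. So it suffices to prove \eqref{eqVolumes}. Fix $n\geq2$ and choose $p_n>p_c$ with $L(p_n)\asymp n$ ($L$ being nonincreasing with $L(p)\to\infty$ as $p\downarrow p_c$; see Section~\ref{secCL}). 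The plan is to establish the comparison $\PP(|\hat V_k|\geq n^2\pi_n)\asymp\PP(\hat\tau_k<p_n)$, after which Theorem~\ref{thmTau} together with the standard scaling relations $\theta(p)\asymp\pi_{L(p)}$ and quasi-multiplicativity $\pi_{cn}\asymp\pi_n$ gives $\PP(\hat\tau_k<p_n)\asymp(\log L(p_n))^{k-1}\theta(p_n)\asymp(\log n)^{k-1}\pi_n$, which is \eqref{eqVolumes}.

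The comparison rests on a structural fact. For $j\geq2$ let $u_j$ be the endpoint of $\hat e_{j-1}$ not lying in $\bigcup_{i<j}\hat V_i$, and set $u_1=0$. Every edge invaded after step $i_{j-1}$ has weight strictly below $\hat\tau_{j-1}$, and the $\hat\tau_j$-open cluster of $u_j$ is disjoint from $\bigcup_{i<j}\hat V_i$; hence, conditionally on the first $j-1$ ponds and on $\hat\tau_{j-1}$, the pond $\hat V_j$ equals $C_{\hat\tau_j}(u_j)$, the finite occupied cluster of $u_j$ at an a.s.\ well-defined level $\hat\tau_j\in(p_c,\hat\tau_{j-1})$ built from otherwise fresh weights. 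Thus $\hat V_k$ is a near-critical finite cluster whose volume is governed by $L(\hat\tau_k)$. For the \emph{upper bound} I split $\{|\hat V_k|\geq n^2\pi_n\}$ by whether $\hat\tau_k<p_n$ or $\hat\tau_k\geq p_n$: the first part is contained in $\{\hat\tau_k<p_n\}$, of the asserted order; for the second I condition on the first $k-1$ ponds and on $u_k$, decompose $\{\hat\tau_k\geq p_n\}$ into shells $\{L(\hat\tau_k)\asymp 2^{-j}n\}$, and use on each shell the crossover estimate $\PP_q(|C(0)|\geq m,\,|C(0)|<\infty)\asymp\PP_{cr}(|C(0)|\geq m)$ for $m\lesssim L(q)^2\pi_{L(q)}$ and the exponential decay of finite-cluster volumes past the correlation-length scale $L(q)^2\pi_{L(q)}$. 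Since $n^2\pi_n / ((2^{-j}n)^2\pi_{2^{-j}n})\to\infty$, the shells $j\geq1$ sum to $o(\pi_n)$ and the $j=0$ shell contributes $\lesssim\pi_n$; hence $\PP(|\hat V_k|\geq n^2\pi_n,\hat\tau_k\geq p_n)\lesssim\pi_n\leq(\log n)^{k-1}\pi_n$.

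For the \emph{lower bound} I fix $p_n'$ with $L(p_n')\asymp n$ and show that, given $\{\hat\tau_k<p_n'\}$, one has $|\hat V_k|\geq c\,n^2\pi_n$ with conditional probability bounded away from $0$: on this event the pond $C_{\hat\tau_k}(u_k)$ is finite at a level of correlation length $\gtrsim n$, by (the method behind) Theorem~\ref{thmTau} it reaches distance $\geq n$ from $u_k$ with probability bounded below, and by the conditional second-moment form of the volume scaling relation it then carries at least $c\,n^2\pi_n$ vertices in $B(u_k,n)$ with probability bounded below. Consequently $\PP(|\hat V_k|\geq c\,n^2\pi_n)\gtrsim\PP(\hat\tau_k<p_n')\asymp(\log n)^{k-1}\pi_n$, and the constant $c$ is removed by passing from $n$ to a fixed multiple of $n$ (using the regularity of $m\mapsto m^2\pi_m$, $\log(cm)\asymp\log m$ and $\pi_{cm}\asymp\pi_m$). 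The step I expect to be the main obstacle, in both directions, is precisely this volume-versus-weight comparison: controlling the volume of the conditionally fresh near-critical finite cluster $C_{\hat\tau_k}(u_k)$ uniformly across all scales of $L(\hat\tau_k)$ — the critical-to-subcritical crossover of the volume tail in the upper bound, and the conditional second-moment lower bound on the volume given the pond reaches scale $n$ in the lower bound — and verifying that conditioning on the first $k-1$ ponds perturbs these local near-critical estimates by at most a constant factor. The combinatorial factor $(\log n)^{k-1}$ is then inherited for free from Theorem~\ref{thmTau}.
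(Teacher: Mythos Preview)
Your route is conceptually different from the paper's, and the heart of it---the comparison $\PP(|\hat V_k|\geq n^2\pi_n)\asymp\PP(\hat\tau_k<p_n)$---has a real gap in the upper bound.

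Consider your shell $\{\hat\tau_k\in[q,q')\}$ with $L(q)\asymp L(q')\asymp 2^{-j}n$. You want to invoke the finite-cluster volume tail $\PP_q(|C(0)|\geq m,\,|C(0)|<\infty)$, but $|\hat V_k|$ is not bounded above by $|C_q(u_k)|$: on this shell $\hat\tau_k\geq q$, so $C_q(u_k)\subset \hat V_k$, which is the wrong inclusion. Using $q'$ instead gives $\hat V_k\subset C_{q'}(u_k)$, but on the same shell $\hat\tau_k<q'$ forces $C_{q'}(u_k)$ to be \emph{infinite}, so that bound is vacuous. In short, a decomposition by $\hat\tau_k$ alone does not control $|\hat V_k|$ from above by any fixed-level finite-cluster volume, and the exponential tail you cite cannot be applied as stated. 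What is missing is a simultaneous control on the \emph{radius} of the pond: once you also localize $\hat R_k$ (and, for $k\geq 2$, $\hat R_{k-1}$), the pond sits inside a box of known size and its volume is dominated by the number of vertices in that box connected to the boundary at the relevant near-critical level---a quantity to which the conditional volume estimates of \cite[Lemma~4]{pond} do apply. This is exactly the decomposition the paper uses: it first splits according to $\hat R_1,\hat R_2$, then according to $\hat\tau_1,\hat\tau_2$, and bounds the volume via events of the type $D_{n,k,m}=\{\#\{x\in Ann(2^k,2^m): x\leftrightarrow B(2^k)\text{ in }Ann(2^k,2^m)\}\geq n^2\pi_n\}$ at the appropriate near-critical level.

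For the lower bound, your plan to show $\PP(|\hat V_k|\geq c\,n^2\pi_n\mid\hat\tau_k<p_n')\geq c'$ is plausible but the conditioning on $\{\hat\tau_k<p_n'\}$ is awkward to work with directly. The paper avoids it by building explicit disjoint events $C_{n,m}$ (indexed by $m=1,\dots,\log n$, each of probability $\asymp\pi_n$) that force the first two ponds to reach scale $n$, and then running a first/second moment argument for $|(\hat V_1\cup\hat V_2)\cap Ann(n/2,n)|$ \emph{conditionally on} $C_{n,m}$. The $\log n$ disjoint events are what produce the $(\log n)^{k-1}$ factor directly, rather than pulling it from Theorem~\ref{thmTau}.
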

\begin{remark}
The case $k=1$ is considered in \cite{pond}.
\end{remark}
\begin{remark}
The second set of inequalities follows from the first one using the relations
${\mathbb P}_{cr}(|C(0)|\geq n^2\pi_n) \asymp \pi_n$ (see \cite[Theorem 2]{pond}) and
$\log (n^2\pi_n) \asymp \log n$.
\end{remark}
\begin{remark}
Let $i_k$ be the index such that $e_{i_k}= \hat e_k$. Then $i_k$ is comparable to $|\hat V_1| + \ldots + |\hat V_k|$.
Therefore the statements (\ref{eqVolumes}) and (\ref{eqVolumes1}) hold with $|\hat V_k|$ replaced by $i_k$.
\end{remark}

\subsubsection{Almost sure bounds}
\noindent
For any $m<n$, let $O(m,n)$ be the number of outlets in $Ann(m,n)$, and let $O(n)$ be the number of outlets in $B(n)$.  We first give $n$-independent bounds on all moments of $O(n,2n)$.

\begin{theorem}
\label{outletmomentthm}
There exists $c_1>0$ such that for all $t,n \geq 1$,

\begin{equation}
\label{outletmomenteq1}
{\E}(O(n,2n)^t) \leq (c_1t)^{3t}.
\end{equation}
In particular, there exists $c_2, \lambda > 0$ such that for all $n$,

\begin{equation}
\label{outletmomenteq2}
{\E}(\exp(\lambda O(n,2n)^{1/3})) < c_2.
\end{equation}

\end{theorem}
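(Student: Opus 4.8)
The plan is to prove the moment bound \eqref{outletmomenteq1} first and then deduce the stretched-exponential integrability \eqref{outletmomenteq2} by a routine Taylor-expansion argument. The key structural fact behind \eqref{outletmomenteq1} is that each outlet $\hat e_j$ in $Ann(n,2n)$ forces a specific ``four-arm-type'' geometric picture to occur at the corresponding scale, and that this picture has a probability governed by the four-arm exponent, which is strictly larger than $2$; this extra room compared to the volume of the annulus is what makes all moments finite with $n$-independent bounds. Concretely, I would first recall (from the standard theory of the correlation length, which the paper imports in Section~\ref{secCL}) that if $e$ is an outlet with weight $\hat\tau$, then at scale $r$ around $e$ there are simultaneously two $p_c$-open arms (inside the pond, reaching the boundary of a box of radius $r$) and two $p_c$-closed dual arms (the closed circuit in the dual that $e$ completes once it is invaded must contain $e^*$ and extend out to radius $r$). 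For $r$ up to the correlation length $L(\hat\tau)$ this is essentially a critical four-arm event, whose probability decays like $r^{-2-\eta}$ for some $\eta>0$ uniformly.

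The combinatorial heart is a \emph{separation of scales} argument. Tile $Ann(n,2n)$ by boxes of dyadic side lengths $2^i$ for $1 \le 2^i \le n$, and for each such box count the outlets it contains. If a box $Q$ of side $2^i$ contains $m$ distinct outlets, then because distinct outlets have distinct weights and each imposes the four-arm picture up to its own correlation length, one can show (as in the analogous pond-counting arguments of \cite{DSV}) that the box must contain many ``well-separated'' four-arm events — roughly, one can extract order $m$ disjoint annuli each carrying a four-arm event — and then the BK/van den Berg--Kesten inequality together with the bound $r^{-2-\eta}$ on the four-arm probability gives that $\P(Q \text{ has} \ge m \text{ outlets})$ decays geometrically in $m$ with a rate uniform in $i$ and in the position of $Q$. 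Summing over the $O((n/2^i)^2)$ boxes of side $2^i$ and then over the $O(\log n)$ dyadic scales, a union bound gives a tail estimate of the form $\P(O(n,2n) \ge s) \le C(\log n)(n/1)^2 e^{-c s}$ crudely; this is too lossy, so instead I would organize the bound as $\E(O(n,2n)^t) \le \sum_i \E(O_i^t)$ where $O_i$ counts outlets whose ``natural scale'' is $2^i$, use that $O_i$ is a sum over $\asymp (n/2^i)^2$ boxes of nearly-independent geometric-tailed contributions, and track the $t$-dependence carefully. A sum of $N$ nonnegative random variables each with a geometric tail and mean $\mu$ has $t$-th moment at most $(C(N\mu + t))^t$ by a standard Rosenthal-type / Bernstein-type estimate; since $N\mu \asymp (n/2^i)^2 (2^i)^{-2-\eta}\cdot(2^i)^2 = (n/2^i)^{2}(2^i)^{-\eta}$ — wait, more carefully $N\mu \asymp (n/2^i)^2\cdot (2^i)^{-2}\cdot(2^i)^{2+2-\eta}$ summable over $i$ — one gets $\sum_i N_i\mu_i$ bounded by an absolute constant, and the total $t$-th moment is at most $(C(1+t))^t$ summed against the scales, i.e. $(Ct)^{O(t)}$. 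The exponent $3t$ (rather than $t$) absorbs the extra polynomial-in-$t$ factors coming from the $\log n$ scales and from the crude constants in separating the four-arm events.

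For \eqref{outletmomenteq2}: given \eqref{outletmomenteq1}, write $\E(\exp(\lambda O^{1/3})) = \sum_{t\ge 0} \lambda^t \E(O^{t/3})/t!$ and bound $\E(O^{t/3}) \le \E(O^{\lceil t/3\rceil})^{1/3}\cdot(\text{Jensen/H\"older factor})$ — more simply, by \eqref{outletmomenteq1} with $t \mapsto t/3$ (interpolating to non-integer moments via Jensen) one has $\E(O^{t/3}) \le (c_1 t)^{t}$, so $\sum_t \lambda^t (c_1 t)^t / t! \le \sum_t \lambda^t (c_1 t)^t/(t/e)^t = \sum_t (e c_1 \lambda)^t$, which converges to a finite constant $c_2$ once $\lambda < 1/(ec_1)$.

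The main obstacle I expect is the separation-of-scales step: making precise that $m$ outlets in a single box genuinely produce $\asymp m$ stochastically (nearly) independent four-arm events, rather than $m$ heavily overlapping ones. This is where one must use the monotonicity in the invasion — outlets are ordered with strictly decreasing weights $\hat\tau_1 > \hat\tau_2 > \cdots$, each outlet lies in a ``new'' pond disjoint from the earlier ponds, and the pond containing the $j$-th outlet must surround (in the dual) a fresh closed circuit — so that the four-arm pictures attached to different outlets live in regions that can be disentangled into disjoint annuli. The bookkeeping here, and getting the constants to depend on $t$ only polynomially so that the $(c_1 t)^{3t}$ form survives the sum over $O(\log n)$ scales, is the delicate part; once that is in hand, the rest is BK together with the a priori four-arm bound $\pi_4(r) \le r^{-2-\eta}$ and elementary estimates.
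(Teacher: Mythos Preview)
Your proposal contains a genuine error that undermines the whole scheme: the polychromatic four-arm probability does \emph{not} satisfy $\pi_4(r)\le r^{-2-\eta}$. In fact the opposite inequality holds --- it is a standard consequence of Kesten's scaling relation (see (\ref{ineqKesten})) and the positivity of $p_n-p_c$ that $r^2\pi_4(r)\to\infty$, and one even has the universal lower bound $\pi_4(r)\ge c r^{-2}$. So the ``extra room'' you invoke is not there, and the BK step ``$m$ outlets $\Rightarrow$ $m$ disjoint four-arm events $\Rightarrow$ probability $\le \pi_4(r)^m$ with $\pi_4(r)\ll r^{-2}$'' does not give anything summable against the $\asymp r^2$ choices of location. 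The mechanism you are missing is the \emph{weight constraint}: an outlet $e$ in $Ann(n,2n)$ has $\tau_e\in[p_c,p]$ for a $p$ only slightly above $p_c$ (because by the time the invasion reaches $Ann(n,2n)$ it has already joined a $p$-open infinite cluster for some $p$ with $L(p)$ comparable to $n$), and this extra factor $(p-p_c)$ is exactly what, via $r^2\pi_4(r)(p_r-p_c)\asymp 1$, turns the count into something $O(1)$ in expectation.

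The paper implements this not by a spatial dyadic tiling --- there is no natural ``scale of an outlet'' inside $Ann(n,2n)$ --- but by a decomposition according to the outlet weight. One conditions on the events $H_{n,k}$ of (\ref{Hnkdefeq}) (a $p_n(k)$-open circuit in $Ann(n/4,n/2)$ connected to infinity), which forces every outlet in $Ann(n,2n)$ to have weight $<p_n(k)$; on $H_{n,k}$ one then has $O(n,2n)\le N(n_k,n,p_n(k))$, where $N(m,n,p)$ counts edges in $Ann(n,2n)$ with an alternating four-arm event to distance $m$ \emph{and} $\tau_e\in[p_c,p]$. A Kesten-style moment computation (the lemma in Section~\ref{outletmomentsec}) gives $\E(N(m,n,p)^t)\le t!\,(Cn/m)^{2t}$ whenever $m\le L(p)$; the point is that $m\asymp L(p)$ makes $n/m\asymp \log^{(k)}n$. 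One then pairs this via Cauchy--Schwarz with $\P(H_{n,k+1}^c)\le C(\log^{(k)}n)^{-C_*C_3}$, and the crucial trick is to let the constant $C_*$ in the definition of $p_n(k)$ \emph{depend on $t$} (specifically $C_*\asymp t$), so that the two powers of $\log^{(k)}n$ cancel to leave $(\log^{(k)}n)^{-1}$, which sums over $k$. The $(c_1 t)^{3t}$ arises from $((2t)!)^{1/2}\cdot (C_* )^{2t}\asymp t^t\cdot t^{2t}$. Your deduction of (\ref{outletmomenteq2}) from (\ref{outletmomenteq1}) is fine and matches the paper.
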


Next we show almost sure bounds on the sequence of random variables $(O(2^n))_{n \geq 1}$.

\begin{theorem}
\label{outletboxthm}
There exists $c_3, c_4 >0$ such that with probability one, for all large $n$,

\begin{equation}
\label{outletboxeq}
c_3n \leq O(2^n) \leq c_4n.
\end{equation}
\end{theorem}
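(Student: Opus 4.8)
\emph{Strategy and the dyadic reduction.} The plan is to reduce both inequalities of (\ref{outletboxeq}) to Theorem~\ref{outletmomentthm} through a dyadic decomposition, and then to upgrade the resulting estimates on $\E O(2^n)$ to almost sure bounds by decoupling the scales. Write $X_j=O(2^j,2^{j+1})$ and $X_j^+=O(2^{j-1},2^{j+1})$; since $Ann(2^{j-1},2^{j+1})$ is the union of two annuli of the form $Ann(m,2m)$ and a sphere carrying $O(1)$ edges, Minkowski's inequality together with Theorem~\ref{outletmomentthm} gives $\E\big((X_j^+)^t\big)\le(c_5 t)^{3t}$ for all $t,j\ge1$. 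Because every outlet of $B(2^n)$ lies in exactly one open annulus $Ann(2^j,2^{j+1})$ with $0\le j<n$, apart from the $O(1)$ edges meeting a sphere $\partial B(2^i)$, we have
\[
\sum_{j=0}^{n-1}X_j \ \le\ O(2^n)\ \le\ C + \sum_{j=0}^{n-1}X_j^+ .
\]
In particular $\E O(2^n)\le c_5^3\,n + C$, which is the upper half of (\ref{outletboxeq}) in expectation.

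\emph{Lower bound in expectation.} Here I would prove $\inf_{j\ge0}\P(X_j\ge1)>0$. Using the fact that an invaded edge $e$ is an outlet iff, at the moment it is invaded, the invaded region already meets an infinite $\tau_e$-open cluster, one builds via RSW a favourable configuration, supported on a bounded number of annuli around scale $2^j$, on which the invasion is forced to cross a high-weight dual circuit around the origin at scale $2^j$ and, immediately upon crossing, enters an infinite near-critical cluster; the crossing edge is then a new outlet lying in $Ann(2^j,2^{j+1})$, and the configuration has probability bounded below uniformly in $j$. (Equivalently: after the invasion first reaches scale $2^j$ it creates its next outlet before scale $2^{j+1}$ with probability bounded away from $0$.) Summing over $j$ gives $\E O(2^n)\ge c_6\,n$.

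\emph{Passing to almost sure bounds.} The obstruction is that the $X_j$ are strongly dependent --- the IPC is a deterministic function of the entire weight configuration --- so Theorem~\ref{outletmomentthm} by itself yields only $O(2^n)=O\big(n(\log n)^{O(1)}\big)$ almost surely. To get linearity I would decouple scales as follows. Fix a large block length $b$ and condition, for each $m$, on the ``outward data'' of the invasion at scale $2^{bm}$: roughly, the trace of the IPC on an annulus near $\partial B(2^{bm})$, the largest weight invaded before the invasion first reaches $\partial B(2^{bm})$, and the near-critical connectivity information needed to continue the invasion outward. Given this data, the number of outlets formed while the invasion is inside $Ann(2^{bm},2^{b(m+1)})$ is essentially a function of the weights in a bounded neighbourhood of that block together with the same data at scale $2^{b(m+1)}$; this Markov structure over $m$, combined with Theorem~\ref{outletmomentthm}, lets one stochastically dominate the block sums $\big(\sum_{j=bm}^{b(m+1)-1}X_j\big)_m$ by an i.i.d.\ sequence with the stretched-exponential tail of (\ref{outletmomenteq2}). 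A Bernstein/Chernoff estimate for a sum of $n/b$ independent nonnegative variables with tails $\le c_2\exp(-\lambda x^{1/3})$ then gives $\P\big(\sum_{j<n}X_j\ge c_4 n\big)\le \exp(-c_7 n)$ for $c_4$ large, and Borel--Cantelli yields the upper bound in (\ref{outletboxeq}). Running the bound $\inf_j\P(X_j\ge1)>0$ through the same decoupling shows that, almost surely, for all large $n$ a positive fraction of the blocks $m<n/b$ contain an outlet, which is the lower bound.

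\emph{Main obstacle.} The real work is the decoupling: because the invasion is determined globally, one must identify precisely which finite ``boundary data'' of the IPC at a scale renders the outlet count in the next block conditionally local, and verify that conditioning on it does not destroy the tail estimate of Theorem~\ref{outletmomentthm}; this rests on the same near-critical percolation and correlation-length estimates that underlie that theorem. Once the decoupling is in place, the Bernstein/Chernoff bound, the Borel--Cantelli step, and the RSW lower bound $\inf_j\P(X_j\ge1)>0$ are routine.
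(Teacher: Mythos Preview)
Your overall architecture is right --- dyadic decomposition, the moment bound of Theorem~\ref{outletmomentthm}, and a concentration step --- but the decoupling you flag as the ``main obstacle'' is left as a plan rather than a proof, and the mechanism you propose (conditioning on boundary data of the actual IPC to get a Markov chain over blocks) is harder than what the paper does and may be delicate to implement: the invasion can re-enter a block after leaving it, and whether an invaded edge inside a block is an outlet depends on the existence of closed dual circuits at all larger scales, not just on local data.

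The paper sidesteps the conditioning entirely by constructing \emph{local surrogate variables} that are exactly independent over well-separated scales. For the upper bound, on an almost-sure event $A$ (the presence of $p_c$-open circuits in annuli $Ann(2^i,2^{i+c\log n})$ for all $i\le n$), the number of outlets in $Ann(2^{i+c\log n},2^{i+c\log n+1})$ is dominated by the number $X^n_i$ of ``disconnecting edges'' of a graph defined purely in the annulus $Ann(2^i,2^{i+2c\log n+1})$: start a fresh invasion from $B(2^i)$ and stop it when it hits the cluster attached to the outer boundary. The $X^n_i$ satisfy the same stretched-exponential moment bound as in Theorem~\ref{outletmomentthm}, and those with indices in a fixed residue class modulo $3c\log n$ are independent; a large-deviation bound for i.i.d.-dominated sums and Borel--Cantelli finish. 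For the lower bound, the paper defines local events $K_i$ (a $p_{2^i}$-closed dual circuit and a $p_c$-open circuit in $Ann(2^i,2^{i+1})$, the latter connected to $\partial B(2^{i+\log i})$ by a $p_{2^i}$-open path) with $\mathbb{P}(K_i)\ge C>0$; these are independent over indices separated by $\log n$, and on an almost-sure event $G$ (no $p_{2^i}$-closed circuit of radius exceeding $2^{i+\log i}$), $K_i$ forces an outlet in $Ann(2^i,2^{i+1})$. A Chernoff bound for independent Bernoullis then gives a linear number of successful $K_i$.

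In short, the paper's answer to your decoupling problem is not to condition on the IPC at all, but to replace the global events by local ones whose relation to outlets is mediated by an almost-sure event depending on the entire configuration. Your RSW construction for $\inf_j\mathbb{P}(X_j\ge1)>0$ is essentially the event $K_i$; what you are missing is the local reformulation that gives genuine independence.
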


Theorem~\ref{outletboxthm} implies related bounds on the convergence rate of the weights $\hat\tau_k$ to $p_c$ and on the growth of the radii  $(\hat R_k)_{k \geq 1}$, defined in Section~\ref{secNotation}.

\begin{corollary}
\label{outletsascor}

\begin{enumerate}
\item There exists $c_5$ and $c_6$ with $1<c_5,c_6<\infty$ such that with probability one, for all large $k$,

\begin{equation}
\label{outletsaseq1}
(c_5)^k \leq \hat R_k \leq (c_6)^k.
\end{equation}

\item There exists $c_7$ and $c_8$ with $0 < c_7, c_8 < 1$ such that with probability one, for all large $k$,

\begin{equation}
\label{outletsaseq2}
(c_7)^k \leq \hat\tau_k - p_c \leq (c_8)^k. 
\end{equation}

\end{enumerate}
\end{corollary}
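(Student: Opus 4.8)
The plan is to obtain both statements from Theorem~\ref{outletboxthm} together with classical near-critical percolation estimates. The bridge between the radii $\hat R_k$ and the counts $O(\cdot)$ is a sample-path fact: since $i_j<i_{j+1}$, the edge $\hat e_j=e_{i_j}$ lies in $E_{i_{j+1}-1}$, so both its endpoints lie in $V_{i_{j+1}-1}=\bigcup_{i\le j+1}\hat V_i$, a set of radius $\hat R_{j+1}$; hence $\hat e_j\subseteq B(\hat R_{j+1})\subseteq B(\hat R_k)$ for all $j\le k-1$, which gives the deterministic inequality $O(\hat R_k)\ge k-1$. In the opposite direction, write $K(R)$ for the number of ponds contained in $B(R)$ (so $\hat R_k\le R\iff k\le K(R)$); the same fact shows $\{1,\dots,K(2^n)-1\}\subseteq\{j:\hat e_j\subseteq B(2^n)\}$, whence $O(2^n)\le K(2^n)+|L_n|$, where $L_n=\{j>K(2^n):\hat e_j\subseteq B(2^n)\}$ and necessarily $\hat R_j>2^n$ for every $j\in L_n$.

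For the lower bound in part~1, fix $c_5\in(1,2^{1/c_4})$ and suppose $\hat R_k<(c_5)^k$. With $n=\lceil k\log c_5\rceil$ one has $B(\hat R_k)\subseteq B(2^n)$, so $O(2^n)\ge O(\hat R_k)\ge k-1$, and $k-1>c_4 n$ for all large $k$, contradicting Theorem~\ref{outletboxthm}; since that theorem holds for all large $n$ simultaneously, $\hat R_k\ge(c_5)^k$ eventually, a.s. For the upper bound in part~1, I would use $O(2^n)\ge c_3 n$: if one can show that a.s. $|L_n|\le\tfrac12 c_3 n$ eventually, then $K(2^n)\ge O(2^n)-|L_n|\ge\tfrac12 c_3 n$ for large $n$, and taking $n=\lceil 2k/c_3\rceil$ yields $\hat R_k\le 2^{2k/c_3}\le(c_6)^k$. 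The point is to bound $|L_n|$: a $j\in L_n$ forces a dual circuit surrounding $G_{i_j-1}$ — a circuit around the origin all of whose edges have weight $\ge\hat\tau_j>p_c$, contained in $B(\hat R_j+1)$ and of diameter $\ge\hat R_j>2^n$ — to pass through the dual edge $\hat e_j^*\in B(2^n+1)$; since $\hat\tau_1>\hat\tau_2>\cdots$, a large $|L_n|$ forces many essentially nested such circuits of decreasing ``closedness level'' that reach far beyond $B(2^n)$ while passing through it, and bounding the probability of many such configurations is a near-critical arm/crossing estimate in the spirit of Theorem~\ref{outletmomentthm}. This should give $\sum_n\P(|L_n|\ge\tfrac12 c_3 n)<\infty$, and Borel--Cantelli completes part~1.

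Part~2 I would deduce from part~1. Let $\nu,\gamma>0$ be constants for which $L(p_c+\varepsilon)\le C\varepsilon^{-\nu}$ and $\theta(p_c+\varepsilon)\le C\varepsilon^{\gamma}$ for small $\varepsilon>0$, both standard consequences of the scaling relations. For the upper bound in part~2: on $\{\hat\tau_k>p_c+(c_8)^k\}\cap\{\hat R_k\ge(c_5)^k\}$ the outer edge boundary of $G_{i_k-1}$ consists of edges of weight $\ge\hat\tau_k>p_c+(c_8)^k$, so their duals form a $(p_c+(c_8)^k)$-closed circuit around the origin lying in $B(\hat R_k+1)$ of diameter $\ge(c_5)^k$; choosing $c_8\in(c_5^{-1/\nu},1)$ makes this diameter superexponentially large in $k$ compared with the correlation length $L(p_c+(c_8)^k)\le C(c_8)^{-\nu k}$ governing such (subcritical) dual circuits, so the exponential decay of subcritical connectivities makes the probability summable in $k$; with part~1 and Borel--Cantelli this gives $\hat\tau_k-p_c\le(c_8)^k$ eventually. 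For the lower bound in part~2: on $\{\hat\tau_k<p_c+(c_7)^k\}\cap\{\hat R_k\le(c_6)^k\}$, every edge invaded after step $i_k$ has weight $<\hat\tau_k$, so the set $G_{i_k}\subseteq B((c_6)^k+1)$ is connected to infinity by edges of weight $<p_c+(c_7)^k$; by a union bound this has probability $\le C(c_6)^{2k}\theta(p_c+(c_7)^k)\le C(c_6)^{2k}(c_7)^{\gamma k}$, summable once $c_7<(c_6)^{-2/\gamma}$, and part~1 and Borel--Cantelli give $\hat\tau_k-p_c\ge(c_7)^k$ eventually.

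The main obstacle is the estimate $|L_n|\le\tfrac12 c_3 n$ (or merely $o(n)$) needed for the upper bound on $\hat R_k$ — that is, controlling outlets that the invasion creates only after leaving a box but which fall geometrically back inside it. Unlike the rest of the argument, this seems to genuinely need the near-critical crossing machinery (as in the proof of Theorem~\ref{outletmomentthm} and in Kesten's scaling-relations analysis) rather than soft reasoning; the other three inequalities are essentially bookkeeping on top of Theorem~\ref{outletboxthm} and standard percolation facts.
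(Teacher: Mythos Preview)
Your treatment of part 2 and of the lower bound in part 1 is essentially the paper's argument: the paper also deduces $(\ref{outletsaseq1})$ from Theorem~\ref{outletboxthm} and then obtains both halves of $(\ref{outletsaseq2})$ exactly along your lines (closed dual circuit of large diameter for the upper bound; a box-to-infinity connection at a slightly supercritical level for the lower bound, summed via Borel--Cantelli).

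Where you diverge is the \emph{upper} bound $\hat R_k\le(c_6)^k$. You are right that the bare inequality $O(2^n)\ge c_3 n$ does not by itself give a lower bound on $K(2^n)$, because outlets with index $j>K(2^n)$ can lie in $B(2^n)$; this is your set $L_n$. But your proposed remedy --- bounding $|L_n|$ through new near-critical circuit estimates --- is harder than necessary, and you yourself flag it as the one genuinely unfinished step. The paper bypasses the issue entirely by reading off a slightly stronger conclusion from the \emph{proof} of the lower bound in Theorem~\ref{outletboxthm}: on the event $G_i\cap K_i$ there is a $p_{2^i}$-closed dual circuit $\mathcal C\subset Ann(2^i,2^{i+1})^*$ around the origin, the invasion must cross $\mathcal C$ by an edge of weight $\ge p_{2^i}$, and after crossing it is absorbed into the $p_{2^i}$-infinite cluster. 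Hence the crossing edge is an outlet $\hat e_{j_i}$, and the invasion just before accepting it is trapped inside $\mathcal C\subset B(2^{i+1})$, i.e.\ $\hat R_{j_i}\le 2^{i+1}$. Since the $j_i$ for distinct $i$ are distinct, the almost-sure bound $\sum_{i\le n} I_{K_i}\ge\alpha n$ gives $K(2^{n+1})\ge\alpha n$ directly, and the upper bound on $\hat R_k$ follows with $c_6=2^{1/\alpha}$. No control of $L_n$ is needed.

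So the gap you identified is real if one insists on using only the \emph{statement} of Theorem~\ref{outletboxthm}; the paper's ``immediate'' should be read as ``immediate from the proof.'' Your $|L_n|$ program might be made to work, but it is the long way around.
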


\begin{remark}
Asymptotics of various ponds statistics as well as CLT-type and large
deviations results for deviations of those quantities away from their
limits are studied in \cite{Goodman} for invasion percolation on regular trees.
Not only do the results in \cite{Goodman} imply exponential almost sure bounds similar to
(\ref{outletsaseq1}) and (\ref{outletsaseq2}), they are very explicit. For instance, it is shown that $\lim_{k \to \infty}\frac{1}{k}\ln(\hat\tau_k-p_c)=-1$ and $\lim_{k \to \infty} \frac{1}{k} \ln(\hat R_k) = 1$ a.s.
\end{remark}

Our last theorem concerns ratios of successive terms of the sequence $(\hat\tau_k-p_c)_{k \geq 1}$.
\begin{theorem}
\label{outletdensitythm}
With probability one, the set

\[ \left\{ \frac{\hat\tau_{k+1}-p_c}{\hat\tau_k-p_c} ~:~ k \geq 1 \right\} \]
is a dense subset of $[0,1]$.
\end{theorem}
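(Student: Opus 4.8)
The plan is to combine the almost sure bounds of Corollary~\ref{outletsascor} with the moment/tail estimates behind Theorem~\ref{outletmomentthm} to produce, along carefully chosen random subsequences, outlets that are forced to be very close together (so the ratio is close to $1$) or very far apart (so the ratio is close to $0$), and to get all intermediate values in between by a Borel--Cantelli argument applied to a dyadic family of scales. Fix a target value $a\in(0,1)$ and a small $\epsilon>0$. By Corollary~\ref{outletsascor}, a.s.\ for large $k$ we have $\hat R_k\in[(c_5)^k,(c_6)^k]$ and $\hat\tau_k-p_c\in[(c_7)^k,(c_8)^k]$; in particular the number of outlets $O(2^n)$ grows linearly in $n$ by Theorem~\ref{outletboxthm}, so an outlet index $k$ and an annular scale $n$ are linked by $k\asymp n$. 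The strategy is: for each scale $n$ in a sparse deterministic sequence $n_j\to\infty$, look at the annulus $Ann(n_j, n_j^{1+\delta})$ for a suitable $\delta=\delta(a,\epsilon)$, and ask for the event $F_{n_j}$ that the invasion has \emph{exactly one} outlet in that annulus with prescribed approximate weight gap — i.e.\ that there are consecutive outlets $\hat e_k,\hat e_{k+1}$ with $\hat\tau_k-p_c$ and $\hat\tau_{k+1}-p_c$ in specified dyadic windows whose ratio lies in $(a-\epsilon,a+\epsilon)$.

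The key point is that such an event has probability bounded below uniformly in the scale. This follows from the same circuit-and-arm constructions used to prove Theorems~\ref{thmTau} and \ref{outletmomentthm}: conditionally on the invaded region up to the $k^{th}$ outlet (which lives in a box of radius $\asymp \hat R_k$), the law of the subsequent invasion is that of an invasion from the outlet edge restricted to weights above $\hat\tau_k$, and by RSW and the independence of weights in disjoint regions one can, with probability bounded away from $0$, (i) build a $p$-closed dual circuit at a chosen radius with $p-p_c$ in a prescribed window to force the next outlet weight, and (ii) control the location so that $\hat\tau_{k+1}-p_c$ sits in the desired window relative to $\hat\tau_k-p_c$. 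Because $\hat\tau_k-p_c$ is itself only known up to constants (Corollary~\ref{outletsascor}), one works with the ratio directly: it suffices to exhibit, with uniformly positive conditional probability, a $p_1$-closed circuit followed by a $p_2$-closed circuit with $(p_2-p_c)/(p_1-p_c)$ in $(a-\epsilon,a+\epsilon)$, which is a finite-volume RSW construction once the two scales are chosen via the correlation length $L(p)$ so that $L(p_1)$ and $L(p_2)$ sit at the right places. Since $a$ was arbitrary in $(0,1)$, one gets a countable dense set of targets $\{a_m\}$ and for each a uniform lower bound $\delta_m>0$ on the conditional probability of hitting $(a_m-\epsilon_m, a_m+\epsilon_m)$.

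With these uniform lower bounds in hand, I would finish by a conditional Borel--Cantelli argument. Choose a sparse sequence of scales so that the relevant events live in disjoint annuli and are therefore conditionally independent given the past (this is exactly the kind of independence exploited in the proof of Theorem~\ref{outletmomentthm}); since each has conditional probability at least $\delta_m>0$, the second Borel--Cantelli lemma (in its conditional form, e.g.\ L\'evy's extension) gives that a.s.\ infinitely many of them occur. Hence for each $m$ there are infinitely many $k$ with $(\hat\tau_{k+1}-p_c)/(\hat\tau_k-p_c)\in(a_m-\epsilon_m,a_m+\epsilon_m)$. Intersecting over the countable family $\{a_m\}$ (with $\epsilon_m\to 0$ along a suitable diagonal) shows that a.s.\ the closure of the ratio set contains every $a_m$, hence is all of $[0,1]$; that every ratio lies in $[0,1]$ is immediate since $(\hat\tau_k)$ is nonincreasing in $k$ and bounded below by $p_c$.

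The main obstacle is step two: getting a lower bound on the probability of the prescribed-ratio event that is genuinely uniform in the scale $n$. The subtlety is that the windows for $\hat\tau_k-p_c$ and $\hat\tau_{k+1}-p_c$ must be specified in terms of the correlation length $L(p)$, which has only polynomial (power-law) control, and one must ensure the RSW circuit constructions at the two correlation-length scales $L(p_1)\asymp L(p_2)/$const can be carried out simultaneously without the conditioning on the previously invaded region (of unknown but controlled size) destroying the positivity. This is handled exactly as in the proofs of Theorems~\ref{thmTau} and \ref{thmVolumes}: condition on the first $k$ ponds, note they are contained in a box of radius $\asymp L(\hat\tau_k)$, and perform all constructions in the annular region outside that box, where the weights are still i.i.d.\ uniform and RSW applies with constants independent of everything already revealed.
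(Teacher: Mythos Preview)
Your approach differs substantially from the paper's, and while the underlying intuition is sound, it is considerably more elaborate than necessary and leans on results (Corollary~\ref{outletsascor}, Theorem~\ref{outletmomentthm}) that the paper's proof does not use at all.

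The paper's argument is direct. Given a target interval $[x,y]\subset(0,1]$, it constructs a single explicit event $D_n$, depending only on the edge weights, with the property that $D_n$ forces two specific edges $e_1\in Ann(2n,3n)$ and $e_2\in Ann(8n,9n)$ to be \emph{consecutive} outlets of the invasion, with $\tau_{e_1}$ and $\tau_{e_2}$ lying in prescribed subintervals of $(p_n,p_n+b(p_n-p_c))$ chosen so that $(\tau_{e_2}-p_c)/(\tau_{e_1}-p_c)\in[x,y]$. Crucially, $D_n$ includes a $p_n$-open connection from the outer edge $e_2$ to infinity, which is what guarantees that $e_1,e_2$ are genuinely consecutive outlets. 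By RSW and the scaling $L(p_c+a\delta)\asymp L(p_c+\delta)$, one gets ${\mathbb P}(D_n)\ge c>0$ uniformly in $n$. The events $D_n$ are \emph{not} independent (each involves a connection to infinity), so Borel--Cantelli is unavailable; instead the paper observes that $\limsup_n D_n$ is a tail event, hence has probability $0$ or $1$, and the uniform lower bound rules out $0$.

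Your conditional Borel--Cantelli route could in principle be pushed through, but two points need repair. First, your claim that the events ``live in disjoint annuli and are therefore conditionally independent'' is incorrect: pinning down $\hat\tau_{k+1}$ requires a connection to infinity at the appropriate level, so these events are never localized in bounded regions. L\'evy's extension does not need independence, so this is recoverable, but then you must verify directly a uniform lower bound on ${\mathbb P}\bigl((\hat\tau_{k+1}-p_c)/(\hat\tau_k-p_c)\in(a-\epsilon,a+\epsilon)\,\bigm|\,\mathcal F_k\bigr)$, and the ``disjoint annuli'' justification is no longer doing any work. Second, to obtain that bound you need the invaded region through step $k$ to sit inside a box of size comparable to $L(\hat\tau_k)$, so that the RSW constructions can be performed with fresh randomness at the scale $L\bigl(p_c+a(\hat\tau_k-p_c)\bigr)\gtrsim L(\hat\tau_k)$; this is plausible but establishing it uniformly over the conditioning (including atypical realizations of $\hat R_k$) is a genuine step you have not supplied. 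The paper sidesteps both issues entirely by building the two outlets simultaneously in one unconditional event and appealing to the $0$--$1$ law rather than to any form of Borel--Cantelli.
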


\subsubsection{Outlets and multiple-armed IICs}\label{secIICs}
\noindent

First we recall the definition of the incipient infinite cluster from \cite{KestenIIC}.
It is shown in \cite{KestenIIC} that the limit
\begin{equation*}
\nu(E) = \lim_{N\to\infty}{\mathbb P}_{cr}(E~|~0\leftrightarrow \partial B(N))
\end{equation*}
exists for any event $E$ that depends on the state of finitely many edges in ${\mathbb E}^2$.
The unique extension of $\nu$ to a probability measure on configurations of open and closed edges
exists.  Under this measure, the open cluster of the origin is a.s. infinite.  It is called the {\it incipient infinite cluster} (IIC).  In Theorem~\ref{thm1IIC} (\cite[Theorem~3]{Jarai}), a relation between IPC and IIC is given.

In this section we introduce multiple-armed IIC measures (Theorem~\ref{thmIIC}) and study
their relation to invasion percolation (Theorem~\ref{thm2IIC} and Theorem~\ref{thm4IIC}).

Let $\sigma\in\{\mbox{open},\mbox{closed}\}^{|\sigma|}$ be a finite vector (here $|\sigma|$ is the number of entries in $\sigma$).
For $l<n$ such that $|\partial B(l)| > |\sigma|$, we say that $B(l)$ is $\sigma$-connected to $\partial B(n)$, $B(l)\leftrightarrow_\sigma \partial B(n)$,
if there exist $|\sigma|$ disjoint paths between $B(l)$ and $\partial B(n)$ such that the number of open paths is given by
the number of entries `open' in $\sigma$, the number of closed paths is given by the number of `closed' in $\sigma$, and the relative
counterclockwise arrangement of these paths is given by $\sigma$. In the definition above we allow $n = \infty$, in this case we write
$B(l)\leftrightarrow_\sigma\infty$.

\begin{theorem}\label{thmIIC}
Suppose that $\sigma$ is alternating and let $l$ be such that $|\partial B(l)|\geq |\sigma|$.
For every cylinder event $E$, the limit
\begin{equation}\label{eqIIC}
\nu_\sigma(E) = \lim_{n\to\infty}{\mathbb P}_{cr}(E~|~B(l)\leftrightarrow_\sigma \partial B(n))
\end{equation}
exists. For the unique extension of $\nu_\sigma$ to a probability measure on the configurations of open and closed edges,
\[
\nu_\sigma(B(l)\leftrightarrow_\sigma \infty) = 1.
\]
We call the resulting measure $\nu_\sigma$ the $\sigma$-{\it incipient infinite cluster measure}.
\end{theorem}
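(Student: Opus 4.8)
The plan is to imitate Kesten's construction of the one-armed IIC in \cite{KestenIIC}, with the one-arm event $\{0\leftrightarrow\partial B(n)\}$ replaced by the alternating $|\sigma|$-arm event $A_n:=\{B(l)\leftrightarrow_\sigma\partial B(n)\}$, and with single-arm quasi-multiplicativity and separation lemmas replaced by their analogues for alternating (polychromatic) arm events at $p_c$. The argument has three parts: (i) for every cylinder event $E$, the sequence ${\mathbb P}_{cr}(E\mid A_n)$ is Cauchy; (ii) the limiting set function extends to a probability measure on ${\mathcal F}$; (iii) under that measure $B(l)\leftrightarrow_\sigma\infty$ almost surely.

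For (i), fix a cylinder event $E$ supported in $B(k)$ and an intermediate scale $m$ with $2k<m<n$. Following \cite{KestenIIC}, condition on the portion inside $B(m)$ of the clusters responsible for an alternating $\sigma$-connection (together with the edges bordering them), and encode the relevant information as ``boundary data'' $T$ on $\partial B(m)$; up to a bounded amount of bookkeeping over which arm uses which part of $T$, the event $A_n$ then splits into an inner event, determined by edges in $B(m)$ and which also decides $E$, and an outer event, determined by the remaining (fresh) edges outside $B(m)$, asserting that the arms are continued from $T$ to $\partial B(n)$ with the prescribed colours and counterclockwise arrangement. By independence of these two sets of edges, summing over $T$ gives
\[
{\mathbb P}_{cr}(E\cap A_n)=\sum_{T} h_E(T)\,g(T,n),\qquad {\mathbb P}_{cr}(A_n)=\sum_{T} h(T)\,g(T,n),
\]
where $h_E(T)$ and $h(T)$ do not depend on $n$ and $g(T,n)$ is the probability of continuing $T$ to $\partial B(n)$ within $Ann(m,n)$. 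The essential input — and where the alternating-arm technology is used — is that all but the ``well-separated'' boundary data $T$ may be discarded at the cost of a multiplicative constant, that $g(T,n)\asymp\pi_\sigma(m,n)$ uniformly over such $T$ (with $\pi_\sigma(m,n)$ the probability of an alternating $|\sigma|$-arm crossing of $Ann(m,n)$), and, crucially, that for any two well-separated $T,T'$ the ratio $g(T,n)/g(T',n)$ converges as $n\to\infty$. This last, ratio-limit, statement is obtained as in \cite{KestenIIC}, by combining quasi-multiplicativity of alternating arm events with a Cauchy estimate across dyadic scales showing that the conditional law of the arms at scale $n$ asymptotically forgets $T$. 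Substituting back into the displayed identity yields the convergence of ${\mathbb P}_{cr}(E\mid A_n)$; write $\nu_\sigma(E)$ for the limit.

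Part (ii) is routine. Being a pointwise limit of the probability measures ${\mathbb P}_{cr}(\,\cdot\mid A_n)$ on cylinder events, $\nu_\sigma$ is finitely additive and normalized on the algebra of cylinder events, and since these events are clopen in the compact space $\{0,1\}^{{\mathbb E}^2}$, any decreasing sequence of nonempty cylinder events has nonempty intersection; hence $\nu_\sigma$ is continuous at $\emptyset$ and, by Carath\'eodory's extension theorem, extends uniquely to a probability measure on ${\mathcal F}$. For part (iii), note that $|\sigma|$ disjoint paths crossing an annulus have the same cyclic order on the two boundary circles (lift to the universal cover, a strip, and use that disjoint crossings of a strip preserve the left-to-right order of their endpoints); hence $A_n\subseteq A_m$ whenever $l<m\le n$. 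Since $A_m$ is itself a cylinder event, $\nu_\sigma(A_m)=\lim_n{\mathbb P}_{cr}(A_m\mid A_n)=1$ for every finite $m$, and a standard compactness (K\"onig-type) argument identifies $\bigcap_m A_m$ with $\{B(l)\leftrightarrow_\sigma\infty\}$; therefore $\nu_\sigma(B(l)\leftrightarrow_\sigma\infty)=\lim_m\nu_\sigma(A_m)=1$.

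I expect the main obstacle to be the uniform decoupling/ratio-limit step in part (i): one must show that, conditioned on producing an alternating $\sigma$-connection out to $\partial B(n)$, the behaviour of the arms far from $B(l)$ is asymptotically insensitive to their configuration near $B(l)$. This is precisely the delicate point in Kesten's original construction; in the multi-arm setting it rests on the separation lemmas for alternating arm events and on quasi-multiplicativity, whose availability is exactly what the hypothesis that $\sigma$ be alternating guarantees.
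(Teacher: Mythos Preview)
Your overall strategy is the paper's: adapt Kesten's construction by replacing one-arm connectivity with the alternating $|\sigma|$-arm event and invoking multi-arm separation/quasi-multiplicativity for the decoupling. But your part (i) has a genuine gap at the decomposition step. You condition on unspecified ``boundary data $T$ on $\partial B(m)$'' and assert that $A_n$ factors; in the one-arm case Kesten achieves this by conditioning on the innermost open circuit in an annulus, and the point is that a single open arm must touch any such circuit, so the circuit cleanly separates inside from outside. With $m$ open and $m$ closed alternating arms this fails --- the closed dual arms cut through every open circuit --- and it is not clear what object your $T$ is or why the inside and outside are conditionally independent given it. The paper's device is to condition instead on the innermost \emph{open circuit with exactly $m$ defects} (closed edges) in an annulus $A_i$: the $m$ closed arms are forced through the $m$ defects, the $m$ open arms may be routed along the circuit itself, and one recovers a clean inside/outside factorization indexed by $(\mathcal C,e_1,\ldots,e_m)$. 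The complementary event --- no open circuit with $m$ defects in $A_i$ --- forces, by Menger's theorem, at least $m+1$ disjoint closed crossings of $A_i$; Reimer's inequality then bounds this error by $(1-\alpha_i)\,{\mathbb P}_{cr}(B(l)\leftrightarrow_\sigma\partial B(n))$, which is the multi-arm analogue of Kesten's (17).

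You correctly identify the ratio-limit step as the crux. The paper handles it as Lemma~\ref{l23}, proved via external and internal arm-separation lemmas (Lemmas~\ref{lKesten} and~\ref{lKesten1}) for arms emanating from a fixed circuit-with-defects rather than from a box; these do not follow directly from the standard separation statements for polychromatic arms in \cite{kesten,Nolin} and their proof --- in particular the construction of a hierarchy of disjoint annuli adapted to the landing segments $T_1,\ldots,T_{2m}$ on $\partial B(2^{n_0})$ --- takes up most of Section~\ref{IICsec}.
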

\begin{remark}
Note that Kesten's IIC measure corresponds to the case $\sigma = \{\mbox{open}\}$. One can check that Kesten's original proof \cite{KestenIIC} also works for the case $\sigma = \{\mbox{open,open}\}$. We use this second IIC measure in Theorem~\ref{thm2IIC}.
\end{remark}

\begin{remark}
The proof we present of Theorem~\ref{thmIIC} can be easily modified to give the existence of IIC's for $\sigma$'s which either do not contain neighboring open paths or do not contain neighboring closed paths (here we take the first and last elements of $\sigma$ to be neighbors). In particular, it works for any 3-arm IIC and for monochromatic IIC's. In the case when there are neighboring open paths (but no neighbouring closed paths) one needs to change the proof by considering closed circuits with defects instead of open circuits.
\end{remark}

Let ${\cal S}$ be the invasion percolation cluster of the origin (see Section~\ref{model}) and define ${\cal O} = \{\hat e_k~:~k\geq 1\}$, the set of \textit{outlets} of the invasion.  Let ${\cal B}$ be the \textit{backbone} of the invasion, i.e., those vertices which are connected in ${\cal S}$ by two disjoint paths, one to the origin and one to $\infty$.  Recall the definitions of $e_x$ and $e_y$ from Section~\ref{secNotation}.  For any vertex $v$, define the shift operator $\theta_v$ on configurations $\omega$ so that for any edge $e$, $\theta_v(\omega)(e) = \omega(e-v),$ where $e-v = \langle e_x-v,e_y-v \rangle$.  For any event $E$, define

\[ \theta_vE = \{ \theta_v(\omega) ~:~ \omega \in E \}, \]
and if ${\cal K}$ is a set of edges in ${\E}^2$, define

\[ E_{\cal K} = \{ {\cal K} \subset {\cal S}\} \textrm{, } \theta_v {\cal K} = \{e \in {\EE}^2~:~e-v \in {\cal K} \} \textrm{, and } \theta_vE_{\cal K} = \{ \theta_v{\cal K} \subset {\cal S} \}. \]

If $e$ is an edge, we define the shift $\theta_e$ to be $\theta_{e_x}$.  The symbols $\theta_eE,$ $\theta_e{\cal K}$, and $\theta_eE_{\cal K}$ are defined similarly.  Let $E'_{\cal K}$ be the event that the set of edges ${\cal K}$ is contained in the cluster of the origin.  For any $p \in [0,1]$, note the definition of the edge configuration $\omega_p$ from Section~\ref{secNotation}. 

We recall \cite[Theorem~3]{Jarai}:

\begin{theorem}\label{thm1IIC}
Let $E$ be an event which depends on finitely many values $\omega_{p_c}(\cdot)$ and let ${\cal K} \subset {\E}^2$ be finite.

\[ \lim_{|v| \to \infty} {\P}(\theta_vE~|~ v \in {\cal S}) = \nu(E); \textrm{ and } \]
\[ \lim_{|v| \to \infty} {\P}(\theta_vE_{\cal K} ~|~ v \in {\cal S}) = \nu(E'_{\cal K}), \]
where the measure on the right is the IIC measure.
\end{theorem}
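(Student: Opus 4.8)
\emph{Proof idea.} This is \cite[Theorem~3]{Jarai}, and I would reconstruct its proof along the following lines. Work on the weight space, so that the invasion ${\cal S}$ and, for every level $p$, the configuration $\omega_p$ are coupled through the $(\tau_e)$; the claim is that, conditionally on $v\in{\cal S}$, the configuration in a fixed box $B(v,m)$ converges in law --- as a critical configuration $\omega_{p_c}|_{B(v,m)}$ for the first display, and as the invaded edge set ${\cal S}\cap B(v,m)$ for the second --- to the corresponding object for Kesten's IIC around the origin. There are three ingredients: (a) localization, i.e.\ the invasion near a far-away invaded point is built only from edges of weight slightly above $p_c$; (b) replacement of the global, dynamically-defined conditioning $\{v\in{\cal S}\}$ by a critical one-arm event at a diverging scale; (c) identification of the limit with the IIC.

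\emph{(a) Localization.} Recall that $(\hat\tau_k)$ is non-increasing and, by Corollary~\ref{outletsascor}, $\hat\tau_k\downarrow p_c$ geometrically. I would first show that, for every $j$ and on the complement of an event of probability exponentially small in $n$, if a pond $\hat V_j$ reaches $\ell^\infty$-distance $\ge n$ from the origin then $\hat\tau_j<p(n)$ for a deterministic $p(n)\downarrow p_c$: setting $q=p_c+\frac12(p(n)-p_c)$, once the invasion reaches the infinite $q$-open cluster it never again crosses an edge of weight $\ge q$ (the minimal boundary edge then always has weight $<q$), so every outlet of weight $\ge p(n)$ occurs before that time, and the invasion reaches the infinite $q$-cluster within $\ell^\infty$-distance $O(L(q))$ of the origin unless the origin lies in a large hole of that cluster. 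Consequently, conditionally on $v\in{\cal S}$, the following good event $G_v$ has conditional probability tending to $1$: every edge of ${\cal S}$ meeting $B(v,m)$ lies in a pond reaching distance $\ge|v|-m$ and so has weight $<p_c+\delta_v$, where $\delta_v\downarrow0$ is taken slowly enough (using that $\P(v\in{\cal S})$ decays only polynomially) that no edge of $B(v,m)$ has weight in $(p_c,p_c+\delta_v)$; on $G_v$ one then has $\omega_{p_c}=\omega_{p_c+\delta_v}$ on $B(v,m)$, $L(p_c+\delta_v)\to\infty$, and the invasion path from $0$ to $v$ is $(p_c+\delta_v)$-open, so that $v\stackrel{p_c+\delta_v}{\longleftrightarrow}\partial B(v,R_v)$ for a scale $R_v\to\infty$ chosen well below $L(p_c+\delta_v)$.

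\emph{(b)--(c) Matching the IIC.} On $G_v\cap\{v\in{\cal S}\}$, the near-critical connection $v\stackrel{p_c+\delta_v}{\longleftrightarrow}\partial B(v,R_v)$ at a scale below $L(p_c+\delta_v)$, together with the absence of weights in $(p_c,p_c+\delta_v)$ inside $B(v,m)$, lets one pass from the $(p_c+\delta_v)$-open picture to the critical one on $B(v,m)$ via Kesten's comparison between near-critical and critical percolation on scales below the correlation length. What remains is to show
\[
\P\big(\theta_vE \,\big|\, v\in{\cal S}\big) - \P_{p_c}\big(E \,\big|\, 0\leftrightarrow\partial B(R_v)\big) \longrightarrow 0 \quad\text{as } |v|\to\infty,
\]
and then use that $\P_{p_c}(E\mid 0\leftrightarrow\partial B(R))\to\nu(E)$ for \emph{any} $R\to\infty$ \cite{KestenIIC}. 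Informally, conditioning on the global event $\{v\in{\cal S}\}$ has, asymptotically, the same effect on $B(v,m)$ as conditioning on the plain critical one-arm event at a diverging scale; I would establish this with Kesten's arm-separation and RSW gluing estimates, which show that given a one-arm connection from $v$ the configuration in $B(v,m)$ is asymptotically independent of how the arm behaves outside a box of radius $\gg m$, so the way ${\cal S}$ continues towards $0$ and towards $\infty$ beyond that box is washed out; a squeeze between the conditionings at two diverging scales then yields the first display. For the second display, $\{{\cal K}+v\subset{\cal S}\}$ is not an $\omega_{p_c}$-cylinder event, so one also uses that a pond is a percolation cluster at its outlet level: on $G_v$ (with $m=\mathrm{diam}\,{\cal K}$ and $B(v,m)$ enlarged to $B(v,2m)$) the invasion inside $B(v,2m)$ is exactly the $(p_c+\delta_v)$-open cluster of $v$ there, hence the $p_c$-open cluster of $v$ in $B(v,2m)$; so $\{{\cal K}+v\subset{\cal S}\}$ becomes the event that ${\cal K}+v$ is $p_c$-open and $p_c$-connected inside $B(v,2m)$ to $v$ and thence to $\partial B(v,R_v)$, which under $\nu$ is the event $E'_{\cal K}$, and the same gluing argument applies.

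\emph{Main obstacle.} The crux is step (b): showing that the strongly nonlocal, dynamically-defined conditioning $\{v\in{\cal S}\}$ can be traded, without changing the asymptotic local law, for the critical one-arm event at a diverging scale. This needs Kesten-type arm-separation and RSW gluing (asymptotic independence of the local picture from the far-away behaviour of the conditioning), the near-critical/critical comparison below the correlation length (since the invasion realizes $(p_c+\delta_v)$-open rather than $p_c$-open connections), and careful scale bookkeeping so that the chosen one-arm scales $R_v$ diverge while still sitting below $L(p_c+\delta_v)$.
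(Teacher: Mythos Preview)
The paper does not give its own proof of this statement: it is quoted verbatim as \cite[Theorem~3]{Jarai}, and the only accompanying text is the attribution. So there is no ``paper's proof'' to compare against beyond the reference itself.

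That said, your sketch is a faithful outline of J\'arai's argument, and it matches the template the paper later spells out in full for the harder Theorem~\ref{thm4IIC} (Section~\ref{4IICsec}): a lower bound on $\P(v\in{\cal S})$, reduction to the event that the invasion near $v$ is built from edges of weight $\le p_n(1)$ (your step~(a)), conditioning on an innermost $p_c$-open circuit in $Ann(e_x;N,M)$ to decouple the inside from the outside (your step~(b)), and finally identification of the inside conditional law with the IIC via Kesten's construction (your step~(c)). One small remark: for the localization step J\'arai (and Section~\ref{4IICsec} here) works directly with the levels $p_n(k)$ of~(\ref{pdefgen}) and the bound~(\ref{expdecay}) on $H_{n,k}^c$, not with the almost-sure Corollary~\ref{outletsascor}; the latter would be logically circular as a reference here since J\'arai's theorem predates this paper, but as a self-contained argument your route also works.
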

The above theorem states that asymptotically the distribution of invaded edges near $v$ is given by the IIC measure.

We are interested in the distribution of invaded edges near the backbone (Theorem~\ref{thm2IIC}) or near an outlet (Theorem~\ref{thm4IIC}).
While the analysis of the distribution of the invaded edges near the backbone is very similar to the proof of Theorem~\ref{thm1IIC},
the study of the distribution of the invaded edges near an outlet is more involved.

\begin{theorem}\label{thm2IIC}
Let $E$ be an event which depends on finitely many values $\omega_{p_c}(\cdot)$ and let ${\cal K} \subset {\E}^2$ be finite.

\[ \lim_{|v| \to \infty} {\P}(\theta_vE~|~ v \in {\cal B}) = \nu^{2,0}(E); \textrm{ and } \]
\[ \lim_{|v| \to \infty} {\P}(\theta_vE_{\cal K} ~|~ v \in {\cal B}) = \nu^{2,0}(E'_{\cal K}), \]
where the measure on the right is the (open,open)-IIC measure.
\end{theorem}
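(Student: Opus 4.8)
The plan is to adapt the proof of Theorem~\ref{thm1IIC}, replacing the single open arm from $v$ that governs the event $\{v\in{\cal S}\}$ by the two disjoint open arms that govern $\{v\in{\cal B}\}$, so that in the local limit $|v|\to\infty$ the invasion conditioned on $\{v\in{\cal B}\}$ turns into critical percolation conditioned on the presence of two disjoint open arms from a neighbourhood of $v$ out to a far-away sphere; letting that sphere recede then produces the $\sigma=\{\mathrm{open},\mathrm{open}\}$ IIC measure $\nu^{2,0}$, whose existence is recorded in the remark following Theorem~\ref{thmIIC}. Fix a cylinder event $E$ depending only on $\omega_{p_c}$ restricted to the edges in $B(\rho)$, and fix $l\geq\rho$ with $|\partial B(l)|\geq 2$.

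First I would pass from the invasion to critical percolation on bounded scales near $v$, exactly as in \cite{Jarai}: the invasion reaches the vicinity of $v$ only through edges whose weights are eventually within $p_c+o(1)$ as $|v|\to\infty$, so the configuration $\omega_{p_c}(\cdot)$ restricted to any fixed box $B(v,n)$ is, up to an error that tends to $0$ as $|v|\to\infty$, governed by near-critical percolation at a parameter within the correlation length $L(p)$ of $p_c$; the standard RSW and correlation-length comparisons (as used in \cite{Jarai}) then replace ${\mathbb P}_p$ by ${\mathbb P}_{cr}$ on these scales at the cost of another $o(1)$. I would quote these estimates rather than reprove them.

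Next I would identify the conditioning event. By definition $v\in{\cal B}$ means $v$ is joined inside ${\cal S}$ by two disjoint paths, one to the origin and one to $\infty$. Near $v$, the arm to $\infty$ is an open path from $B(v,l)$ to $\partial B(v,n)$ in $\omega_{p_c}$; the arm to the origin, since $|v|\to\infty$, also leaves $B(v,l)$ and crosses $Ann(v;l,n)$ without returning, and so contributes a second disjoint open arm. Here one also needs that, conditionally on $\{v\in{\cal B}\}$, with probability $1-o(1)$ there is no outlet inside $B(v,n)$ — a typical backbone point far from the origin is far from any outlet, by the sparseness of outlets from Corollary~\ref{outletsascor} and Theorem~\ref{outletboxthm} together with the fact that forcing an outlet near $v$ costs an extra (polychromatic) four-arm event whose probability is negligible relative to the two-arm cost of $\{v\in{\cal B}\}$. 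Using quasi-multiplicativity of the two-arm probability to decouple the configuration inside $B(v,l)$ from the annulus $Ann(v;l,n)$, exactly as in the proof of Theorem~\ref{thmIIC}, one then obtains, for each fixed $l$ and $n$ and with $\sigma=\{\mathrm{open},\mathrm{open}\}$,
\[
{\mathbb P}(\theta_vE\mid v\in{\cal B})={\mathbb P}_{cr}\!\left(E\mid B(l)\leftrightarrow_\sigma\partial B(n)\right)+o(1),\qquad |v|\to\infty,
\]
where translation invariance of ${\mathbb P}_{cr}$ has been used to recentre at the origin. Sending $n\to\infty$ and using the existence of the $\sigma=\{\mathrm{open},\mathrm{open}\}$ IIC (the remark after Theorem~\ref{thmIIC}) identifies the right-hand side with $\nu^{2,0}(E)$, which proves the first assertion; the second assertion, about $\theta_vE_{\cal K}$ and $\nu^{2,0}(E'_{\cal K})$, follows by the same steps, noting as in Theorem~\ref{thm1IIC} that the local event $\{{\cal K}\subset{\cal S}\}$ becomes, in the limit, the event that ${\cal K}$ lies in the open cluster of the origin under $\nu^{2,0}$.

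The main obstacle is the identification step: proving, with error bounds uniform enough to survive both the $|v|\to\infty$ limit and the subsequent $n\to\infty$ limit, that conditioning the invasion on $\{v\in{\cal B}\}$ biases the local picture near $v$ only through the two disjoint open arms, and not through any other global feature of the invasion — the pond and outlet structure, or the particular correlation-length scale at which the backbone pushes on toward infinity. This is handled, as in \cite{Jarai}, by combining the correlation-length estimates with near-independence of the invasion's behaviour on well-separated annuli around $v$; it is the one place where the present argument genuinely extends the proof of Theorem~\ref{thm1IIC} rather than merely repeating it, consistent with the remark in the introduction that this case is ``very similar'' to Theorem~\ref{thm1IIC}.
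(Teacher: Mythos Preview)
Your proposal is correct and matches the paper's approach: the paper's entire proof is the single line ``Similar to the proof of \cite[Theorem~3]{Jarai},'' and your sketch is a faithful elaboration of exactly that adaptation, replacing the one-arm conditioning by the two-arm one and invoking the $\{\mathrm{open},\mathrm{open}\}$-IIC from the remark after Theorem~\ref{thmIIC}. One minor comment: the outlet-sparsity digression via Corollary~\ref{outletsascor} and Theorem~\ref{outletboxthm} is more machinery than needed---as in J\'arai's original, the simpler observation that $\mathbb{P}(\exists f\in B(v,M):\tau_f\in[p_c,p_{|v|}(1)])\to 0$ for fixed $M$ already forces both invasion arms to be $p_c$-open near $v$.
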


\begin{proof}
Similar to the proof of \cite[Theorem~3]{Jarai}.
\end{proof}

\begin{theorem}\label{thm4IIC}

Let $E$ be an event which depends on finitely many values $\omega_{p_c}(\cdot)$ $($but not on $\omega_{p_c}(e-e_x))$, and let ${\cal K}$ be a finite set of edges such that $e-e_x \notin {\cal K}$.

\[ \lim_{|e| \to \infty} {\P}(\theta_eE~|~ e \in {\cal O}) = \nu^{2,2}(E); \textrm{ and } \]
\[ \lim_{|e| \to \infty} {\P}(\theta_eE_{\cal K} ~|~ e \in {\cal O}) = \nu^{2,2}(E'_{\cal K}), \]
where the measure on the right is the $($open,closed,open,closed $)$-IIC measure.
\end{theorem}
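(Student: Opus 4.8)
## Proof Plan for Theorem \ref{thm4IIC}

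The plan is to follow the conditioning strategy of \cite{Jarai} (as used in Theorem~\ref{thm1IIC}), but with the event ``$v\in\mathcal{S}$'' replaced by the event ``$e\in\mathcal{O}$'', and the limiting measure $\nu$ replaced by the four-arm alternating IIC measure $\nu^{2,2}$ constructed in Theorem~\ref{thmIIC}. First I would give a clean \emph{geometric characterization} of the event $\{e\in\mathcal{O}\}$ in terms of local arm events around $e$ and global connection events. Recall that $e$ is an outlet precisely when, roughly speaking, $e$ is the maximal-weight edge on the backbone at the moment it is invaded: there should be a finite pond $\hat V_j$ attached to the origin side of $e$ all of whose boundary edges (other than $e$) have weight larger than $\tau_e$, while from the far side of $e$ the invasion continues to $\infty$ using only edges of weight $<\tau_e$ (in fact, eventually, arbitrarily close to $p_c$). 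Localizing near $e$, this forces, at the scale of a small box $B(e-e_x, l)$ around $e$: two disjoint $\tau_e$-open arms (one going back toward the origin's pond, one going out toward $\infty$ through the backbone) and two disjoint $\tau_e$-closed dual arms (separating the pond from the rest), arranged alternately around $e$ — exactly the color sequence $\sigma=(\text{open},\text{closed},\text{open},\text{closed})$. This is the reason $\nu^{2,2}$ is the right limit.

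The main technical step is a \emph{two-scale decomposition}. Fix the event $E$ depending only on edges in some box $B(e-e_x,l_0)$ with $e-e_x\notin$ those edges, and a large scale $N$. One conditions on the configuration ``far'' from $e$ — specifically on the weights of all edges outside $B(e-e_x,N)$ together with enough information to know that $e$ will indeed be an outlet with $\hat\tau$-value in a narrow window $[p_c,p_c+\epsilon)$ and that the relevant pond and backbone arms exit $B(e-e_x,N)$ at prescribed boundary points — and then argues that, conditionally, the configuration inside $B(e-e_x,N)$ is governed (up to corrections vanishing as $l_0\ll N\ll |e|$) by $\mathbb{P}_{p_c}$ conditioned on the four-arm event $B(l_0)\leftrightarrow_\sigma\partial B(N)$. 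Three inputs make this work: (i) \emph{arm-event separation and gluing lemmas} (à la Kesten), which let one decouple the arms crossing the annulus $Ann(e-e_x;l_0,N)$ from their far-field continuations, with multiplicative error bounded by a constant; (ii) the \emph{bounded-ratios / near-critical comparison}, i.e. that for $p\in[p_c,p_c+\epsilon)$ with $N\le L(p)$ the $p$-arm probabilities are comparable to the $p_c$-arm probabilities, so that the precise value $\hat\tau_e$ inside the window does not matter; this is exactly the mechanism already exploited in Theorems~\ref{thmTau} and \ref{thmVolumes}; and (iii) the \emph{existence of the limit} $\nu^{2,2}$ from Theorem~\ref{thmIIC}, which guarantees that $\mathbb{P}_{p_c}(E\mid B(l_0)\leftrightarrow_\sigma\partial B(N))$ has a limit as $N\to\infty$. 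Combining these, $\mathbb{P}(\theta_eE\mid e\in\mathcal{O})$ is sandwiched between $(1\pm o(1))\,\mathbb{P}_{p_c}(E\mid B(l_0)\leftrightarrow_\sigma\partial B(N_\pm))$ for suitable $N_-\to\infty$, forcing convergence to $\nu^{2,2}(E)$.

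The cluster version — the statement with $E_{\mathcal K}$ and $\nu^{2,2}(E'_{\mathcal K})$ — follows by the same argument with one additional observation already present in \cite{Jarai}: requiring a finite edge set $\mathcal K$ (with $e-e_x\notin\mathcal K$) to lie in $\mathcal S$ is, locally, the same as requiring it to lie in the open cluster of $e$, since any edge of $\mathcal K$ invaded after $e$ must be reached from $e$ by $\tau_e$-open edges, and the pond on the other side is $\tau_e$-open and connected to $e$; the discrepancy between ``in $\mathcal S$'' and ``in the cluster of the origin'' is absorbed into the $o(1)$ error because the invasion, past the outlet, uses edges of weight tending to $p_c$, hence behaves like critical percolation at the scales involved.

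The step I expect to be the main obstacle is making precise, and controlling the error in, the claim that \emph{conditionally on being an outlet with weight in $[p_c,p_c+\epsilon)$, the near-field of $e$ looks critical with the four alternating arms}. The subtlety is that ``$e$ is an outlet'' is not a local event: it constrains the \emph{global} structure (the pond is finite; the far side reaches $\infty$; $\tau_e$ is the running maximum on the backbone), and one must show these global constraints contribute only a bounded multiplicative factor after the arms have been separated at scale $N$. This requires the RSW-type circuit constructions and the arm-separation technology in an annulus, plus care that the conditioning on the pond being finite (an event about dual circuits) does not bias the four-arm configuration inside $B(e-e_x,N)$ beyond the controlled factor — precisely the kind of estimate underlying the proof of Theorem~\ref{thmIIC}, which we may invoke. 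Handling the case $k=1$ (the first outlet, where the ``pond'' side connects directly to the origin) versus $k\ge2$ uniformly, and ensuring the estimates are uniform in which outlet $\hat e_k=e$ we are looking at, is a bookkeeping issue that I would address by phrasing everything in terms of the intrinsic local picture at $e$ rather than the index $k$.
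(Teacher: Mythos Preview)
Your plan is aligned with the paper's approach and correctly identifies the main ingredients: the alternating four-arm description near an outlet, a two-scale decoupling, and the appeal to Theorem~\ref{thmIIC}. Two specific devices the paper uses deserve to be named, since they are exactly how the ``main obstacle'' you flag gets resolved, and your sketch is imprecise on both.

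First, the decoupling object is not ``the configuration outside $B(e_x,N)$'' but the \emph{outermost $p_c$-open circuit with two defects} around $e_x$ in an annulus $Ann(e_x;N,M)$, where both $N$ and $M$ are chosen of order $(\log|e|)^{O(1)}$. The two open arcs of this circuit carry the two open arms and the two defected dual edges carry the two closed dual arms from $e$ out to the far field; conditioning on this circuit and on the configuration on and outside it factorizes $\{e\in\mathcal{O}\}$ exactly into an interior four-arm event (from $e$ to $\mathcal{C}$) times an exterior event measurable in $(\eta,\tau_e)$. To show such a circuit exists with conditional probability $1-\epsilon$ one needs a five-arm-versus-four-arm Reimer-type estimate together with the a~priori lower bound $\mathbb{P}(e\in\mathcal{O})\ge c\,(p_n-p_c)\,\mathbb{P}_{cr}(A_n^{2,2})$; the latter is a separate first step you should include.

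Second, ``near-critical comparison of arm probabilities'' gives only bounded ratios and would not by itself produce the \emph{exact} limit $\nu^{2,2}(E)$. The paper's mechanism is sharper: because $M$ is only polylogarithmic while $p_n(1)-p_c$ is polynomially small in $|e|$, one shows that with conditional probability $1-\epsilon$ \emph{no edge} interior to $\mathcal{C}$ (other than $e$ itself) has weight in $[p_c,p_n(1))$. On that event the interior configuration is literally governed by $\mathbb{P}_{cr}$, and the interior conditional probability is exactly $\mathbb{P}_{cr}(E,\,0\leftrightarrow_{2,2}\mathcal{C})$, to which the ratio statement from Theorem~\ref{thmIIC} applies directly. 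This is the precise content behind your phrase ``the value $\hat\tau_e$ inside the window does not matter'', and making it rigorous (with the simultaneous choice of $N,M$ satisfying several competing constraints) is where most of the work lies.
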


The final theorem is inspired by \cite[Theorem 2]{Jarai2} and its proof is similar to (but much easier than) that of Theorem~\ref{thm4IIC} above, so we omit it.  Let ${\cal P}_n$ be the set of edges which are pivotal\footnote{For the definition of pivotality, we refer the reader to \cite{Grimmett}.} for the event

\[ \left\{ \begin{array}{c}
\textrm{There is an occupied path which connects the left and}\\
\textrm{right sides of }B(n) \textrm{ and which remains inside }B(n).
\end{array} \right\} \]

\begin{theorem}\label{thmpivotal4IIC}

Let $E$ be an event which depends on finitely many values $\omega_{p_c}(\cdot)$ and let ${\cal K} \subset {\E}^2$ be finite.  Let $h(n) \to \infty$ in such a way that $h(n) \leq n$.

\[ \lim_{\stackrel{n \to \infty}{|e| \leq n-h(n)}} {\mathbb P}_{cr}(\theta_eE ~|~ e \in {\cal P}_n) = \nu^{2,2}(E); \textrm{ and} \]
\[ \lim_{\stackrel{n \to \infty}{|e| \leq n-h(n)}} {\mathbb P}_{cr}(\theta_e E_{\cal K} ~|~ e \in {\cal P}_n) = \nu^{2,2}(E'_{\cal K}). \]
\end{theorem}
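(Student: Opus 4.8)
\textbf{Proof proposal for Theorem~\ref{thmpivotal4IIC}.}

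The plan is to reduce pivotality to a directed alternating four-arm event and then to run the localization machinery behind Theorem~\ref{thm4IIC} and Kesten's construction of the IIC; the difference between conditioning on $\{e\in\mathcal P_n\}$ and conditioning on the event defining $\nu^{2,2}$ is only a ``far-away'' multiplicative factor, which will cancel. First I would invoke the classical planar-duality description of pivotality: with $\sigma=(\mathrm{open},\mathrm{closed},\mathrm{open},\mathrm{closed})$, the edge $e$ belongs to $\mathcal P_n$ if and only if there are four pairwise disjoint arms from $e$ to $\partial B(n)$, cyclically ordered around $e$ as $\sigma$ in the sense dictated by the orientation of $e$, consisting of two $p_c$-open paths joining $e_x,e_y$ to the left and right sides of $B(n)$ and two $p_c$-closed dual paths joining $e_x^*,e_y^*$ to the bottom and top sides of $B(n)$, none of them using $e$ or $e^*$. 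In particular $\{e\in\mathcal P_n\}$ does not depend on $\omega_{p_c}(e)$, and that value is independent of all other edge states; since in the definition of $\nu^{2,2}$ the central edge $e-e_x$ lies strictly inside $B(l)$ and is unaffected by the arm-conditioning, $\nu^{2,2}$ also assigns probability $p_c$ to the event that $e-e_x$ is open. Averaging this value out at rate $p_c$ on both sides of the asserted identities reduces the theorem to the case where $\theta_eE$ does not depend on $\omega_{p_c}(e)$ and $e-e_x\notin\mathcal K$, i.e.\ to exactly the hypotheses of Theorem~\ref{thm4IIC}.

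Fix such an $E$, depending (after the shift $\theta_e$) only on the edges in $B(e,m_0)$. For each configuration $\eta$ of those edges, the event $\{e\in\mathcal P_n\}\cap\{\omega|_{B(e,m_0)}=\eta\}$ is, by the duality description, equivalent to an event $\Gamma_n^\eta$ on the edges outside $B(e,m_0)$: four disjoint arms with colour sequence $\sigma$ from points of $\partial B(e,m_0)$ prescribed by $\eta$ to the appropriate four sides of $B(n)$. Thus $\PP_{cr}(\theta_eE\mid e\in\mathcal P_n)$ is the ratio of $\sum_\eta\PP_{cr}(\eta)\,\PP_{cr}(\Gamma_n^\eta)$, the numerator running over the (finitely many) $\eta$ compatible with $\theta_eE$ and the denominator over all $\eta$. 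By quasi-multiplicativity of the four-arm probabilities together with Kesten's arm-separation lemma, for any two $\eta,\eta'$ with $\PP_{cr}(\Gamma_n^\eta),\PP_{cr}(\Gamma_n^{\eta'})>0$ the ratio $\PP_{cr}(\Gamma_n^\eta)/\PP_{cr}(\Gamma_n^{\eta'})$ is bounded and converges as $n\to\infty$ --- the common large-scale factor (four arms from an intermediate scale out to the sides of $B(n)$) cancelling up to a quasi-multiplicative error controlled by arm-separation --- and, the index set being finite, this forces $\PP_{cr}(\theta_eE\mid e\in\mathcal P_n)$ to converge. Routing arms through a wide annulus $Ann(e;m_0,h(n))\subseteq B(n)\setminus B(e,m_0)$ via RSW, all these estimates are uniform in the position of $e$ with $|e|\le n-h(n)$; this is precisely where the hypothesis $h(n)\to\infty$ enters, making the local picture around $e$ inside $B(n)$ asymptotically concentric. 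Finally, the very same ratios $\PP_{cr}(\Gamma_n^\eta)/\PP_{cr}(\Gamma_n^{\eta'})$ appear in the construction of $\nu^{2,2}$ through Theorem~\ref{thmIIC} for the alternating $\sigma$ (conditioning on four alternating arms from $B(l)$ to $\partial B(n)$ and letting $n\to\infty$ gives $\nu_\sigma=\nu^{2,2}$), and the directional constraint in $\Gamma_n^\eta$ --- which open arm reaches the left side, and so on --- is invisible in the limit since at any fixed scale RSW lets each arm be routed either way. Hence $\PP_{cr}(\theta_eE\mid e\in\mathcal P_n)\to\nu^{2,2}(E)$.

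For the second statement one takes $\theta_eE_{\mathcal K}$ to be the event that the shifted edge set $\mathcal K$ lies in the open cluster carrying the left--right crossing of $B(n)$, and $E'_{\mathcal K}$ to be the $\nu^{2,2}$-event that $\mathcal K$ lies in the (a.s.\ infinite) cluster of the origin. This is not a cylinder event, but exactly as in J\'arai's proof of Theorem~\ref{thm1IIC} the required connectivity localizes to $B(e,m)$ for large $m$, up to a uniformly small error, because the alternative connections through $Ann(e;m_0,m)$ have RSW-bounded conditional probability; combining this with the previous paragraph gives the claim. I expect the main obstacle to be precisely this uniformity: making arm-separation, quasi-multiplicativity and the RSW gluing uniform in the intermediate scales and in the position of $e$ inside $B(n)$, in particular out to distance $h(n)$ from $\partial B(n)$ where the box $B(e,\cdot)$ is far from concentric with $B(n)$, so that conditioning on pivotality genuinely localizes. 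The reason this is ``much easier than'' Theorem~\ref{thm4IIC} is that $\{e\in\mathcal P_n\}$ is already a static directed four-arm event at the level of a single configuration, whereas an outlet of the invasion is defined through the invasion dynamics together with the pond decomposition, a structure that must first be unwound before the same arm estimates can be brought to bear.
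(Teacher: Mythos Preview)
Your overall strategy is sound, and you correctly identify why this is much easier than Theorem~\ref{thm4IIC}: pivotality is already a static alternating four-arm event, so the events $H$, $Q(\theta_e\mathcal C)$, the conditioning on $\tau_e$, and all of the invasion-dynamics bookkeeping (Steps~1, 4, 5 of that proof) simply disappear.

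The route you take differs, however, from what the paper intends, and the argument as written has a gap at the decisive step. You assert that quasi-multiplicativity plus arm-separation shows the ratios $\PP_{cr}(\Gamma_n^\eta)/\PP_{cr}(\Gamma_n^{\eta'})$ \emph{converge}. Those tools give only that such ratios are \emph{bounded} above and below; convergence is exactly the content of the IIC existence machinery (Lemma~\ref{l23} here, Kesten's Lemma~(23)), and that machinery is not a consequence of quasi-multiplicativity --- it is built on the decoupling through open circuits with defects. Your later appeal to Theorem~\ref{thmIIC} does not quite close the gap either, since that theorem concerns undirected arms to $\partial B(N)$, whereas your $\Gamma_n^\eta$ forces arms to land on prescribed sides of $B(n)$; the remark that RSW lets one ``route either way'' again delivers comparability up to constants, not convergence of ratios.

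The paper's intended argument, following Steps~2--3 and 5--6 of the proof of Theorem~\ref{thm4IIC}, handles both issues at once. One shows (Step~2, via Reimer's inequality) that conditionally on $\{e\in\mathcal P_n\}$ the annulus $Ann(e_x;N,M)$ contains a $p_c$-open circuit with two defects with probability close to $1$, for suitable $N<M\le h(n)$ with $N\to\infty$. Conditioning on the outermost such circuit $\theta_e\mathcal C$ (Step~3) produces an \emph{exact} factorization of $\{e\in\mathcal P_n\}$: inside $\theta_e\mathcal C$ one sees $\{e\leftrightarrow_{2,2,p_c}\theta_e\mathcal C\}$, while outside one sees a single event --- carrying all of the directional constraint and all dependence on $n$ and on the position of $e$ --- that is identical in numerator and denominator and hence cancels \emph{exactly}, with no limiting argument required. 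What remains is ${\mathbb P}_{cr}(E\mid 0\leftrightarrow_{2,2}\mathcal C)\to\nu^{2,2}(E)$, the slight extension of Theorem~\ref{thmIIC} already used in Step~6. Inserting this circuit decoupling in place of your direct ratio assertion closes the gap, and the argument then coincides with the paper's.
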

\subsection{Structure of the paper}
\noindent
We define the correlation length and state some of its properties in Section~\ref{secCL}.  We prove Theorems~\ref{thmTau} and \ref{thmVolumes} in Sections~\ref{tausec} and \ref{volumessec}, respectively.  The proofs of Theorems~\ref{outletmomentthm} - \ref{outletdensitythm} are in Section~\ref{almostsuresec}: the proof of Theorem~\ref{outletmomentthm} is in Section~\ref{outletmomentsec}; the proofs of Theorem~\ref{outletboxthm} and Corollary~\ref{outletsascor} are in Section~\ref{outletboxsec}; and the proof of Theorem~\ref{outletdensitythm} is in Section~\ref{densitysec}.  We prove Theorem~\ref{thmIIC} in Section~\ref{IICsec} and Theorem~\ref{thm4IIC} in Section~\ref{4IICsec}.  For the notation in Sections~\ref{tausec} - \ref{4IICsec} we refer the reader to Section~\ref{secNotation}.

\section{Correlation length and preliminary results}\label{secCL}
\noindent
In this section we define the correlation length that will play a crucial role in our proofs.
The correlation length was introduced in \cite{corrlength intro} and further studied in \cite{kesten}.

\subsection{Correlation length}
\noindent
For $m,n$ positive integers and $p\in (p_c,1]$ let
\begin{eqnarray*}
\sigma(n,m,p) = \PP_p(\mbox{there is an open horizontal crossing of }[0,n]\times[0,m]).
\end{eqnarray*}
Given $\varepsilon>0$, we define
\begin{eqnarray}
L(p,\varepsilon)=\min\{n~:~ \sigma(n,n,p)\geq 1-\varepsilon\}.
\end{eqnarray}
$L(p,\varepsilon)$ is called the finite-size scaling correlation length
and it is known that $L(p,\varepsilon)$ scales like the usual correlation length (see \cite{kesten}).
It was also shown in \cite{kesten} that the scaling of $L(p,\varepsilon)$ is independent of $\varepsilon$ given that it is small enough,
i.e. there exists $\varepsilon_0>0$ such that for all $0<\varepsilon_1,\varepsilon_2\leq \varepsilon_0$
we have $L(p,\varepsilon_1)\asymp L(p,\varepsilon_2)$.
For simplicity we will write $L(p)=L(p,\varepsilon_0)$ for the entire paper.
We also define $$p_n=\sup\{p~:~ L(p)>n\}.$$
It is easy to see that $L(p)\to\infty$ as $p\to p_c$ and $L(p)\to 0$ as $p\to 1$.
In particular, the probability $p_n$ is well-defined.
It is clear from the definitions of $L(p)$ and $p_n$ and from the RSW theorem that, for positive integers $k$ and $l$,
there exists $\delta_{k,l}>0$ such that, for any positive integer $n$ and for all $p\in [p_c,p_n]$,
\begin{equation*}
\PP_p(\mbox{there is an open horizontal crossing of }[0,kn]\times[0,ln]) >\delta_{k,l}
\end{equation*}
and
\begin{equation*}
\PP_p(\mbox{there is a closed horizontal dual crossing of }([0,kn]\times[0,ln])^*)>\delta_{k,l}.
\end{equation*}
By the FKG inequality and a standard gluing argument \cite[Section~11.7]{Grimmett} we get that, for positive integers $n$ and $k\geq 2$
and for all $p\in[p_c,p_n]$,
\begin{equation*}
\PP_p(Ann(n,kn)\mbox{ contains an open circuit around the origin}) >(\delta_{k,k-2})^4
\end{equation*}
and
\begin{equation*}
\PP_p(Ann(n,kn)^*\mbox{ contains a closed dual circuit around the origin}) >(\delta_{k,k-2})^4.
\end{equation*}

\subsection{Preliminary results}
\noindent
For any positive $l$ we define
$\log^{(0)}l = l$ and
$\log^{(j)}l=\log(\log^{(j-1)}l)$ for all $j\geq 1$,
as long as the right-hand side is well defined.
For $l>10$, let
\begin{equation}\label{eqDefLog*}
\log^*l=\min\{j> 0\ :\ \log^{(j)}l\mbox{ is well-defined and}\log^{(j)}l\leq 10\}.
\end{equation}

Our choice of the constant $10$ is quite arbitrary,
we could take any other large enough positive number instead of $10$.
For $l>10$, let
\begin{equation}\label{pdefgen}
p_l(j)
=
\left\{
\begin{array}{ll}
\inf\Big\{p>p_c\ :\ L(p)\leq\frac{l}{C_*\log^{(j)}l}\Big\} & \textrm{if } j\in (0, \log^*l),\\
p_c & \textrm{if } j\geq\log^*l,\\
1 & \textrm{if } j=0.
\end{array}\right.
\end{equation}
The value of $C_*$ will be chosen later.
Note that there exists a universal constant $L_0(C_*)>10$ such that $p_l(j)$ are well-defined if $l>L_0(C_*)$ and non-increasing in $l$.  The last observation follows from monotonicity of $L(p)$ and the fact that the functions $l/\log^{(j)}l$ are non-decreasing in $l$ for $j \in (0,\log^*l)$ and $l \geq 3$.

We give the following results without proofs.

\begin{enumerate}
\item
(\cite[(2.10)]{Jarai})
There exists a universal constant $D_1$ such that, for every $l>L_0(C_*)$ and $j\in (0,\log^*l)$,
\begin{eqnarray}\label{corineq}
C_*\log^{(j)}l\leq\frac{l}{L(p_l(j))}\leq D_1C_*\log^{(j)}l.
\end{eqnarray}
\item
(\cite[Theorem~2]{kesten})
There is a constant $D_2$ such that, for all $p>p_c$,
\begin{eqnarray}\label{thetacorrineq}
\theta(p)
\leq
\PP_{p}\big[0\leftrightarrow\partial B(L(p))\big]
\leq
D_2 \PP_{cr}\big[0\leftrightarrow\partial B(L(p))\big],
\end{eqnarray}
where $\theta(p)=\PP_p(0\to\infty)$ is the percolation function for Bernoulli percolation.
\item
(\cite[Section~4]{Nguyen})
There is a constant $D_3$ such that, for all $n\geq 1$,
\begin{equation}\label{eqNguyen}
{\mathbb P}_{p_n}(B(n)\leftrightarrow\infty)\geq D_3.
\end{equation}
\item
(\cite[(3.61)]{kesten})
There is a constant $D_4$ such that, for all positive integers $r\leq s$,
\begin{eqnarray}\label{ineqCrossing}
\frac{\PP_{cr}(0\leftrightarrow \partial B(s))}{\PP_{cr}(0\leftrightarrow \partial B(r))}
\geq
D_4\sqrt{\frac{r}{s}}.
\end{eqnarray}
\item
Recall the definition of $B_n$ from Section~\ref{secNotation}.
There exist positive constants $D_5$ and $D_6$ such that, for all $p> p_c$,
\begin{equation}\label{eqExpDecay}
{\mathbb P}_p(B_{n})
\leq
D_5 \exp\left\{-D_6\frac{n}{L(p)}\right\}.
\end{equation}
It follows, for example, from \cite[(2.6) and (2.8)]{Jarai} (see also \cite[Lemma~37 and Remark~38]{Nolin}).
\item
(\cite[Proposition~34]{Nolin})
Fix $e=\langle(0,0),(1,0)\rangle$, and let $A^{2,2}_n$ be the event that $e_x$ and $e_y$ are connected to $\partial B(n)$ by open paths,
and $e_x^*$ and $e_y^*$ are connected to $\partial B(n)^*$ by closed dual paths. Note that these four paths are disjoint and alternate.
Then
\begin{equation}\label{ineqKesten}
(p_n - p_c)n^2 {\mathbb P}_{cr}(A^{2,2}_n) \asymp 1,~~~~n\geq 1.
\end{equation}
\end{enumerate}

\section{Proof of Theorem~\ref{thmTau}}\label{tausec}
\noindent

We give the proof for the case $k=2$. The proof for $k\geq 3$ is similar to the proof for $k=2$, and we omit the details.
Note that \cite[Theorem 2]{kesten} it is sufficient to prove that
\begin{equation}\label{eqTau1}
{\mathbb P}(\hat\tau_2 < p)\asymp (\log L(p)) {\mathbb P}_{cr}\left(0\leftrightarrow \partial B(L(p))\right).
\end{equation}
We first prove the upper bound. Recall the definition of the radius $\hat R_k$ for $k\geq 1$ from Section~\ref{secNotation}.  We partition the box $B(L(p))$ into $\lfloor \log L(p)\rfloor$ disjoint annuli:
\begin{eqnarray*}
{\mathbb P}(\hat\tau_2<p)
&\leq
&{\mathbb P}(\hat R_1 \geq L(p))\\
&+
&\sum_{k=0}^{\lfloor\log L(p)\rfloor}
{\mathbb P}\left(\hat\tau_2 < p; \hat R_1\in \left[\frac{L(p)}{2^{k+1}},\frac{L(p)}{2^k}\right)\right).
\end{eqnarray*}
We show that there is a universal constant $C_1$ such that for any $p>p_c$ and $m\leq L(p)/2$,
\begin{equation}\label{eqTauUpper1}
{\mathbb P}\left(\hat\tau_2 < p; \hat R_1\in [m,2m]\right)
\leq
C_1 {\mathbb P}_{cr}\left(0\leftrightarrow \partial B(L(p))\right).
\end{equation}
From \cite{pond}, ${\mathbb P}(\hat R_1 \geq L(p))\leq C_2 {\mathbb P}_{cr}\left(0\leftrightarrow \partial B(L(p))\right)$.  Therefore
the upper bound in inequality (\ref{eqTau1}) will immediately follow from~(\ref{eqTauUpper1}).
We partition the event $\{ \hat\tau_2 < p; \hat R_1\in [m,2m] \}$ according to the value of $\hat\tau_1$:
\begin{eqnarray}\label{eqTauUpper2}
\sum_{j=1}^{\log^*m}
{\mathbb P}\left(\hat\tau_2 < p; \hat R_1\in [m,2m]; \hat\tau_1 \in [p_m(j),p_m(j-1))\right).
\end{eqnarray}
Note that if the event $\{\hat R_1 \geq m, \hat\tau_1 \in [p_m(j),p_m(j-1))\}$ occurs then
\begin{itemize}
\item[-]
there is a $p_m(j-1)$- open path from the origin to $\partial B(m)$, and
\item[-]
the origin is surrounded by a $p_m(j)$-closed circuit of diameter at least $m$ in the dual lattice.
\end{itemize}
We also note that if the event $\{\hat\tau_2<p, \hat R_1 \leq 2m\}$ occurs then
there is a $p$-open path from $B(2m)$ to $\partial B(L(p))$.

Recall the definitions of $B_{n,p}$ and $B_n$ from Section~\ref{secNotation}.
From the above observations, it follows that the sum (\ref{eqTauUpper2}) is bounded from above by
$$
\sum_{j=1}^{\log^*m}
{\mathbb P}\left(
0\stackrel{p_m(j-1)}\longleftrightarrow \partial B(m); B(2m)\stackrel{p}\longleftrightarrow\partial B(L(p)); B_{m,p_m(j)}
\right).
$$
The FKG inequality and independence give an upper bound of
$$
\sum_{j=1}^{\log^*m}
{\mathbb P}_{p_m(j-1)}(0\leftrightarrow \partial B(m))
{\mathbb P}_p(B(2m)\leftrightarrow\partial B(L(p)))
{\mathbb P}_{p_m(j)}(B_{m}).
$$
It follows from (\ref{corineq}) and (\ref{eqExpDecay}) that
${\mathbb P}_{p_m(j)}(B_{m}) \leq C_3 (\log^{(j-1)}m)^{-C_4}$ for some $C_3$ and $C_4$, where $C_4$ can be made arbitrarily large
given that $C_*$ is made large enough.  Inequality (\ref{ineqCrossing}) gives
$$
{\mathbb P}_{p_m(j-1)}(0\leftrightarrow \partial B(m))
\leq C_5 (\log^{(j-1)}m)^{\frac{1}{2}} {\mathbb P}_{cr}(0\leftrightarrow\partial B(m)),
$$
and (\ref{thetacorrineq}) and the RSW Theorem give
$$
{\mathbb P}_p(B(2m)\leftrightarrow\partial B(L(p)))
\leq
C_6 {\mathbb P}_{cr}(B(2m)\leftrightarrow\partial B(L(p))).
$$
Also, the RSW Theorem and the FKG inequality imply that
$$
{\mathbb P}_{cr}(0\leftrightarrow\partial B(m))
{\mathbb P}_{cr}(B(2m)\leftrightarrow\partial B(L(p)))
\leq
C_7
{\mathbb P}_{cr}(0\leftrightarrow\partial B(L(p))).
$$
Therefore, we obtain that the probability ${\mathbb P}\left(\hat\tau_2 < p; \hat R_1\in [m,2m]\right)$
is bounded from above by
$$
C_8{\mathbb P}_{cr}(0\leftrightarrow\partial B(L(p)))\sum_{j=1}^{\log^*m}(\log^{(j-1)}m)^{-C_4 + 1/2}.
$$
As in \cite[(2.26)]{Jarai}, one can easily show that, for $C_4>1$,
$$
\sum_{j=1}^{\log^*m}(\log^{(j-1)}m)^{-C_4 + 1/2} < C_9.
$$
The upper bound in (\ref{eqTau1}) follows.

\bigskip
We now prove the lower bound in (\ref{eqTau}). For $p>p_c$ and a positive integer $m<L(p)/2$,
we consider the event $C_{m,p}$ that there exists an edge $e\in Ann(m,2m)$ such that
\begin{itemize}
\item[-]
$\tau_e\in(p_c,p_m)$;
\item[-]
there exist two $p_c$-open paths in $B(2L(p))\setminus \{e\}$,
one connecting the origin to one of the endpoints of $e$,
and another connecting the other endpoint of $e$ to the boundary of $B(2L(p))$;
\item[-]
there exists a $p_m$-closed dual path $P$ in $Ann(m,2m)^*\setminus\{e^*\}$ connecting the endpoints of $e^*$ such that
$P\cup\{e^*\}$ is a circuit around the origin;
\item[-]
there exists a $p_c$-open circuit around the origin in $Ann(L(p),2L(p))$;
\item[-]
there exists a $p$-open path connecting $B(L(p))$ to infinity.
\end{itemize}

\begin{figure}
\begin{center}
\scalebox{.6}{\includegraphics*[viewport = 0in 0in 6.5in 6in]{./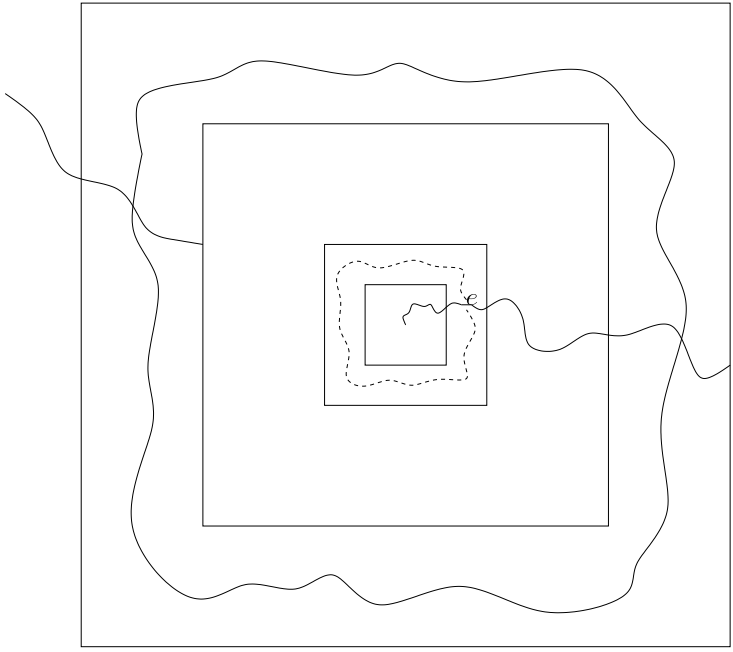}}
\end{center}
\caption{The event $C_{m,p}$.  The boxes, in order from smallest to largest, are $B(m),~B(2m),~B(L(p)),~$and$~B(2L(p))$.  The dotted path is $p_m$-closed, the path to infinity is $p_m$-open, and all other paths are $p_c$-open.}
\label{cmpfig}
\end{figure}
See Figure~\ref{cmpfig} for an illustration of the event $C_{m,p}$.  It can be shown similarly to \cite[Corollary~6.2]{DSV} that
$$
{\mathbb P}(C_{m,p}) \geq C_{10} {\mathbb P}_{cr}(0\leftrightarrow\partial B(L(p))),
$$
where we also use the fact that ${\mathbb P}_p(B(L(p))\leftrightarrow\infty) >C_{11}$ (see (\ref{eqNguyen})).
It remains to notice that for fixed $p$, the events $C_{\lfloor L(p)/2^k\rfloor,p}$ are disjoint and each of them implies the event $\{\hat\tau_2<p\}$.
Therefore,
$$
{\mathbb P}(\hat\tau_2<p)
\geq
\sum_{k=0}^{\lfloor\log L(p)\rfloor-1} {\mathbb P}(C_{\lfloor L(p)/2^k\rfloor,p})
\geq C_{10}\lfloor\log L(p)\rfloor {\mathbb P}_{cr}(0\leftrightarrow\partial B(L(p))).
$$
The lower bound is proven.

\section{Proof of Theorem~\ref{thmVolumes}}\label{volumessec}
\noindent

The case $k = 1$ is considered in \cite[Theorem 2]{pond}. We give the proof for $k = 2$.
The proof for $k\geq 3$ is similar to the proof for $k=2$, and we omit the details.

We first prove the upper bound.
By the RSW Theorem, it is sufficient to bound the probability ${\mathbb P}(|\hat V_2|\geq 2 n^2\pi_n)$.
We partition this probability according to the value of the radii $\hat R_1$ and $\hat R_2$, defined in Section~\ref{secNotation}.
Without loss of generality we can assume that $n = 2^N$.
\begin{eqnarray*}
{\mathbb P}(|\hat V_2|\geq 2 n^2\pi_n)
&\leq
&{\mathbb P}(\hat R_2 \geq n)\\
&+
&\sum_{m=1}^{N}\sum_{k=1}^{m}
{\mathbb P}\left(
|\hat V_2|\geq 2 n^2\pi_n; \hat R_1 \in [2^{k-1},2^{k}); \hat R_2 \in [2^{m-1},2^{m})
\right).
\end{eqnarray*}
It follows from \cite{DSV} that ${\mathbb P}(\hat R_2 \geq n)\asymp (\log n)\pi_n$.
We now consider the second term. We decompose the probability of the event
$$
E_{n,k,m}
=
\left\{
|\hat V_2|\geq 2 n^2\pi_n; \hat R_1 \in [2^{k-1},2^{k}); \hat R_2 \in [2^{m-1},2^{m})
\right\}
$$
according to the values of $\hat\tau_1$ and $\hat\tau_2$:
\begin{equation}\label{eqVolUp1}
\sum_{i=1}^{\log^*2^k}\sum_{j=1}^{\log^*2^m}
{\mathbb P}\left(
E_{n,m,k}; \hat\tau_1\in [p_{2^k}(i),p_{2^k}(i-1)); \hat\tau_2\in [p_{2^m}(j),p_{2^m}(j-1))
\right).
\end{equation}
We consider the event $D_{n,k,m}$ that the number of vetices in the annulus $Ann(2^k,2^m)$ connected
to $B(2^k)$ inside $Ann(2^k,2^m)$ is at least $n^2\pi_n$.
If the vertices in the definition of $D_{n,k,m}$ are connected to $B(2^k)$ by $p$-open paths,
we denote the corresponding event by $D_{n,k,m}(p)$.
We also consider the event $D_{n,k}$ that the number of vertices in the box $B(2^k)$ connected to
the boundary $\partial B(2^k)$ is at least $n^2\pi_n$. If the vertices in the definition of $D_{n,k}$
are connected to $\partial B(2^k)$ by $p$-open paths, we denote the corresponding event by $D_{n,k}(p)$.  Recall the definition of $B_{n,p}$ from Section~\ref{secNotation}.  The probability of a typical summand in (\ref{eqVolUp1}) can be bounded from above by
\begin{equation*}
{\mathbb P}\left(\begin{array}{c}
B_{2^{k-1},p_{2^k}(i)}; B_{2^{m-1},p_{2^m}(j)};
0\stackrel{p_{2^k}(i-1)}\longleftrightarrow \partial B(2^{k});
B(2^k)\stackrel{p_{2^m}(j-1)}\longleftrightarrow\infty; \\[12pt]
D_{n,k,m}(p_{2^m}(j-1)) \cup D_{n,k}(p_{2^k}(i-1))
\end{array}
\right),
\end{equation*}
where we use the fact that $\hat\tau_1 > \hat\tau_2$ a.s.

We use the FKG inequality and independence to estimate the above probability.
It is no greater than
\begin{eqnarray}
&&{\mathbb P}\left(
B_{2^{k-1},p_{2^k}(i)}; B_{2^{m-1},p_{2^m}(j)}
\right)
{\mathbb P}_{p_{2^k}(i-1)}\left(0 \leftrightarrow \partial B(2^k)
\right)
{\mathbb P}_{p_{2^m}(j-1)}\left(
B(2^k)\leftrightarrow\infty;
D_{n,k,m}
\right)\label{eqD1}\\
&&+
{\mathbb P}\left(
B_{2^{k-1},p_{2^k}(i)}; B_{2^{m-1},p_{2^m}(j)}
\right)
{\mathbb P}_{p_{2^k}(i-1)}\left(0 \leftrightarrow \partial B(2^{k}); D_{n,k}
\right)
{\mathbb P}_{p_{2^m}(j-1)}\left(
B(2^k)\leftrightarrow\infty
\right).\label{eqD2}
\end{eqnarray}
The probability ${\mathbb P}(B_{2^{k-1},p_{2^k}(i)}; B_{2^{m-1},p_{2^m}(j)})$ is bounded from above by \cite[(6.6)]{DSV}

\[ C_1(\log^{(i-1)}2^k)^{-C_2}(\log^{(j-1)}2^m)^{-C_2}, \] where the constant $C_2$ can be made arbitrarily large given
$C_*$ is made arbitrarily large.

We first estimate (\ref{eqD1}).
It follows from (\ref{ineqCrossing}) that
\begin{eqnarray*}
{\mathbb P}_{p_{2^k}(i-1)}\left(0 \leftrightarrow \partial B(2^k)\right)
&\leq
&C_3 (\log^{(i-1)}2^k)^{1/2}{\mathbb P}_{cr}(0\leftrightarrow \partial B(2^k))\\
&\leq
&C_4 (\log^{(i-1)}2^k)^{1/2}(\log^{(j-1)}2^m)^{1/2}
\frac{{\mathbb P}_{cr}(0\leftrightarrow \partial B(2^m))}
{{\mathbb P}_{p_{2^m}(j-1)}(B(2^k)\leftrightarrow \infty)}.
\end{eqnarray*}
Substitution gives the following upper bound for (\ref{eqD1}):
\[
{\mathbb P}_{cr}(0\leftrightarrow \partial B(2^m))
C_1C_4(\log^{(i-1)}2^k\log^{(j-1)}2^m)^{-C_2+1/2}
{\mathbb P}_{p_{2^m}(j-1)}(D_{n,k,m}~|~B(2^k)\leftrightarrow \infty).
\]
We now estimate (\ref{eqD2}).
It follows from the FKG inequality and (\ref{ineqCrossing}) that
\[
{\mathbb P}_{p_{2^k}(i-1)}\left(0 \leftrightarrow \partial B(2^k) \right)
\leq
C_5 (\log^{(i-1)}2^k)^{1/2}
{\mathbb P}_{cr}\left(0 \leftrightarrow \partial B(2^k) \right).
\]
Substitution gives the following upper bound for (\ref{eqD2}):
\[
{\mathbb P}_{cr}(0\leftrightarrow \partial B(2^k))
C_{1}C_{5}(\log^{(i-1)}2^k\log^{(j-1)}2^m)^{-C_2+1/2}
{\mathbb P}_{p_{2^k}(i-1)}(D_{n,k} ~|~ 0\leftrightarrow \partial B(2^k)).
\]
Therefore, the sum (\ref{eqVolUp1}) is bounded from above by
\[
C_{6}{\mathbb P}_{cr}(0\leftrightarrow \partial B(2^m))
\sum_{i=1}^{\log^*2^k}\sum_{j=1}^{\log^*2^m}
(\log^{(i-1)}2^k\log^{(j-1)}2^m)^{-C_2+1/2}
{\mathbb P}_{p_{2^m}(j-1)}(D_{n,k,m}~|~B(2^k)\leftrightarrow \infty)
\]
\[
+
C_{6}{\mathbb P}_{cr}(0\leftrightarrow \partial B(2^k))
\sum_{i=1}^{\log^*2^k}\sum_{j=1}^{\log^*2^m}
(\log^{(i-1)}2^k\log^{(j-1)}2^m)^{-C_2+1/2}
{\mathbb P}_{p_{2^k}(i-1)}(D_{n,k} ~|~ 0\leftrightarrow \partial B(2^k)).
\]
Note that \cite[(2.26)]{Jarai} if $C_2 >1/2$, then there exists $C_7>0$ such that for all $k$,
$$
\sum_{i=1}^{\log^*2^k}(\log^{(i-1)}2^k)^{-C_2+1/2}
\leq
C_{7}<\infty.
$$
Also note that analogously to \cite[Lemma 4]{pond} one can show that there exist $C_{8}-C_{11}$ such that,
for all $p>p_c$,
\[
{\mathbb P}_{p}(D_{n,k,m}~|~B(2^k)\leftrightarrow \infty)
\leq
C_{8}\exp\left\{
-C_{9}\frac{n^2\pi_n}{2^{2m}\pi(2^m,p)}
\right\}
\]
and
\[
{\mathbb P}_{p}(D_{n,k} ~|~ 0\leftrightarrow \partial B(2^k))
\leq
C_{10}\exp\left\{
-C_{11}\frac{n^2\pi_n}{2^{2k}\pi(2^k,p)}
\right\},
\]
where $\pi_n$ and $\pi(n,p)$ are defined in Section~\ref{secNotation}.  In particular,
\begin{eqnarray*}
{\mathbb P}_{p_{2^m}(j-1)}(D_{n,k,m}~|~B(2^k)\leftrightarrow \infty)
&\leq
&C_{8}\exp\left\{
-C_{9}\frac{n^2\pi_n}{2^{2m}\pi(2^m,p_{2^m}(j-1))}
\right\}\\
&\leq
&C_{8}\exp\left\{
-C_{12}\frac{n^2\pi_n}{2^{2m}\pi_{2^m}}(\log^{(j-1)}2^m)^{-1/2}
\right\},
\end{eqnarray*}
and, similarly,
\begin{eqnarray*}
{\mathbb P}_{p_{2^k}(i-1)}(D_{n,k} ~|~ 0\leftrightarrow \partial B(2^k))
&\leq
&C_{10}\exp\left\{
-C_{11}\frac{n^2\pi_n}{2^{2k}\pi(2^k,p_{2^k}(i-1))}
\right\}\\
&\leq
&C_{10}\exp\left\{
-C_{13}\frac{n^2\pi_n}{2^{2k}\pi_{2^k}}(\log^{(i-1)}2^k)^{-1/2}
\right\}.
\end{eqnarray*}

Therefore, the sum $\sum_{m=1}^{N}\sum_{k=1}^{m}{\mathbb P}(E_{n,k,m})$ is not bigger than
\begin{equation*}
C_{14}(\log n)\pi_n
\sum_{m=1}^N \frac{\pi_{2^m}}{\pi_n}
\sum_{j=1}^{\log^*2^m}
(\log^{(j-1)}2^m)^{-C_2+1/2}
\exp\left\{
-C_{12}\frac{n^2\pi_n}{2^{2m}\pi_{2^m}}(\log^{(j-1)}2^m)^{-1/2}
\right\}
\end{equation*}
\begin{equation}
\label{tempeq1}
+C_{14}(\log n)\pi_n
\sum_{k=1}^N \frac{\pi_{2^k}}{\pi_n}
\sum_{i=1}^{\log^*2^k}
(\log^{(i-1)}2^k)^{-C_2+1/2}
\exp\left\{
-C_{13}\frac{n^2\pi_n}{2^{2k}\pi_{2^k}}(\log^{(i-1)}2^k)^{-1/2}
\right\},
\end{equation}
where $\log n$ comes from the fact that $\sum_{k=1}^m1 = m \leq N = \log n$.
Finally, it follows from \cite[p.~419]{pond} that
$$
\sum_{m=1}^N \frac{\pi_{2^m}}{\pi_n}
\sum_{j=1}^{\log^*2^m}(\log^{(j-1)}2^m)^{-C_2+1/2}
\exp\left\{-C_{12}\frac{n^2\pi_n}{2^{2m}\pi_{2^m}}(\log^{(j-1)}2^m)^{-1/2}\right\}
\leq
C_{15}<\infty.
$$
A similar bound holds for the summand~(\ref{tempeq1}).  The proof for the second inequality in (\ref{eqVolumes}) is completed.

\bigskip
We now prove the first inequality in (\ref{eqVolumes}).
For $m\leq N$, let $C_{n,m}$ be the event that there exists an edge in $Ann(2^{m-1},2^m)$ such that
\begin{itemize}
\item[-]
its weight $\tau_e\in (p_c,p_{2^m})$;
\item[-]
there exist two disjoint $p_c$-open paths, one connecting an end of $e$ to the origin, and one connecting
the other end of $e$ to $\partial B(2n)$;
\item[-]
there exist a $p_{2^m}$-closed dual path connecting the edges of $e^*$ in $Ann(2^{m-1},2^m)^*$;
\item[-]
there exists a $p_c$-open circuit in $Ann(n,2n)$.
\end{itemize}
It can be shown similarly to \cite[Corollary~6.2]{DSV} that ${\mathbb P}(C_{n,m})\asymp \pi_n$.
We also note that the events $C_{n,m}$ are disjoint
and each of them implies the event $\{\hat R_2\geq n\}$.
Using the arguments from the proof of \cite[Corollary~6.2]{DSV}, it follows that, for any $x\in Ann(2^{N-1},n) =: A_n$ and $1\leq m\leq N-1$,
\[
{\mathbb P}(x\stackrel{p_c}\longleftrightarrow \partial B(2n)~|~C_{n,m})
\geq
C_{16}\pi_n,
\]
from which we conclude that
\begin{equation}\label{eqFirstMom}
{\mathbb E}(|(\hat V_1\cup\hat V_2)\cap A_n| ~|~C_{n,m})
\geq
C_{17}n^2\pi_n.
\end{equation}
We will show later that, for $1\leq m \leq N-2$,
\begin{equation}\label{eqSecMom}
{\mathbb E}(|(\hat V_1\cup\hat V_2)\cap A_n|^2 ~|~C_{n,m})
\leq
C_{18}\left({\mathbb E}(|(\hat V_1\cup\hat V_2)\cap A_n| ~|~C_{n,m})\right)^2.
\end{equation}
If (\ref{eqSecMom}) holds, the second moment estimate gives that, for some $C_{19}>0$,
\[
{\mathbb P}(|\hat V_1\cup\hat V_2|\geq C_{19}n^2\pi_n;C_{n,m})
\geq
C_{19}{\mathbb P}(C_{n,m})
\geq
C_{20}\pi_n.
\]
Therefore
\begin{eqnarray*}
{\mathbb P}(|\hat V_1\cup\hat V_2|\geq C_{19}n^2\pi_n; \hat R_2\geq n)
&\geq
&\sum_{m=1}^{N-2}{\mathbb P}(|\hat V_1\cup\hat V_2|\geq C_{19}n^2\pi_n; C_{n,m})\\
&\geq
&C_{20}(N-2)\pi_n.
\end{eqnarray*}
In particular, we obtain ${\mathbb P}(|\hat V_1\cup\hat V_2|\geq n^2\pi_n) \geq C_{21}(\log n) \pi_n$.
Recall that ${\mathbb P}(|\hat V_1|\geq n^2\pi_n) \asymp \pi_n$.
It immediately gives the inequality
${\mathbb P}(|\hat V_2|\geq n^2\pi_n) \geq C_{22}(\log n) \pi_n$.

It remains to prove (\ref{eqSecMom}). Note that
\begin{equation}\label{eqVol3}
{\mathbb E}(|(\hat V_1\cup\hat V_2)\cap A_n|^2 ~|~C_{n,m})
=
\sum_{x,y\in A_n}{\mathbb P}(x,y\in \hat V_1~|~C_{n,m})
+
\sum_{x,y\in A_n}{\mathbb P}(x,y\in \hat V_2~|~C_{n,m}),
\end{equation}
where we use the fact that, by construction, $\hat V_1$ and $\hat V_2$ cannot both intersect $A_n$.
We estimate the two sums on the r.h.s. separately.
We only consider the first sum.
The other sum is treated similarly.
We decompose the probability ${\mathbb P}(x,y\in \hat V_1 ; C_{n,m})$ according to the value of $\hat\tau_1$:
\[
\sum_{j=1}^{\log^*n}{\mathbb P}(x,y\in \hat V_1; C_{n,m}; \hat\tau_1\in [p_n(j),p_n(j-1))).
\]
Using arguments as in the first part of the proof of this theorem, the above sum is bounded from above by
\begin{eqnarray*}
&&\sum_{j=1}^{\log^*n}{\mathbb P}\left(
\begin{array}{c}
0\stackrel{p_c}\longleftrightarrow\partial B(2^{m-1}); B(2^m)\stackrel{p_c}\longleftrightarrow\partial B(2^{N-2}); \\[5pt]
x\stackrel{p_n(j-1)}\longleftrightarrow \partial B(x,2^{N-2});y\stackrel{p_n(j-1)}\longleftrightarrow\partial B(y,2^{N-2});\\[5pt]
B_{n,p_n(j)}
\end{array}
\right)\\
&\leq&
{\mathbb P}_{cr}(0\leftrightarrow\partial B(2^{m-1})){\mathbb P}_{cr}( B(2^m)\leftrightarrow\partial B(2^{N-2}))\\
&&\sum_{j=1}^{\log^*n}
{\mathbb P}_{p_n(j)}(B_n)
{\mathbb P}_{p_n(j-1)}(x\leftrightarrow \partial B(x,2^{N-2});y\leftrightarrow\partial B(y,2^{N-2})),
\end{eqnarray*}
where $B_{n,p}$ and $B_n$ are defined in Section~\ref{secNotation}.  Again, using tools from the first part of the proof of this theorem (see also the proof of Theorem~1.5 in \cite{DSV}), the above sum is no greater than
\[
C_{23}{\mathbb P}_{cr}(0\leftrightarrow \partial B(n))
{\mathbb P}_{cr}(x\leftrightarrow \partial B(x,2^{N-2});y\leftrightarrow\partial B(y,2^{N-2})).
\]
Similar arguments apply to the second sum in (\ref{eqVol3}).
Since ${\mathbb P}(C_{m,n})\asymp \pi_n$, we get
\[
{\mathbb E}(|(\hat V_1\cup\hat V_2)\cap A_n|^2 ~|~C_{n,m})
\leq
C_{24}\sum_{x,y\in A_n}{\mathbb P}_{cr}\left(x\leftrightarrow \partial B(x,2^{N-2}), y\leftrightarrow \partial B(y,2^{N-2})\right).
\]
The last sum is bounded from above by $C_{25}n^4\pi_n^2$
(see, e.g., the proof of Theorem 8 in \cite{KestenIIC}), which along with (\ref{eqFirstMom}) gives (\ref{eqSecMom}).

\section{Proof of Theorems~\ref{outletmomentthm} - \ref{outletdensitythm}}\label{almostsuresec}

\subsection{Proof of Theorem~\ref{outletmomentthm}}\label{outletmomentsec}

We will use the following lemma.  For $m, n \geq 1$, and $p \in [0,1]$, let $N(m,n,p)$ be the number of edges $e$ in the annulus $Ann(n,2n)$ such that (a) $e$ is connected to $\partial B(e_x,m)$ (where $e_x$ is defined in Section~\ref{secNotation}) by two disjoint $p$-open paths, (b) $e^*$ is connected to $\partial B(e_x,m)^*$ by two disjoint $p_c$-closed paths, (c) the open and closed paths are disjoint and alternate, and (d) $\tau_e \in [p_c,p]$.

\begin{lemma}
Let $m$ be such that $m \leq L(p)$ and $m \leq n$.  There exists $C_1$ such that for all $t,n$,

\begin{equation}
\label{fourarmmoment}
{\E}(N(m,n,p)^t) \leq t!(C_1\frac{n}{m})^{2t}.
\end{equation}

\end{lemma}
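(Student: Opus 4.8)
The plan is to bound the $t$-th moment of $N(m,n,p)$ by a union-bound / multi-point estimate over $t$-tuples of edges, using the BK-type independence of the four-arm events at well-separated edges together with the fact that the weight condition $\tau_e \in [p_c,p]$ contributes a factor $(p-p_c)$ that is killed off by the four-arm probability via inequality~(\ref{ineqKesten}).

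\medskip

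\emph{Step 1: Expand the moment as a sum over tuples.} Write $N = N(m,n,p) = \sum_{e \in Ann(n,2n)} X_e$, where $X_e$ is the indicator that $e$ has the four-arm property to distance $m$ (with the stated colors and alternation) and $\tau_e \in [p_c,p]$. Then
\[
{\E}(N^t) = \sum_{e_1,\dots,e_t} {\P}(X_{e_1} = \cdots = X_{e_t} = 1).
\]
We split this sum according to the "clustering pattern" of the $t$ edges: group the edges into maximal clumps within which consecutive edges are at distance $\le 2m$, so that distinct clumps are at distance $> 2m$ and the four-arm events (each depending only on edges within distance $m$ of $e_{x}$) in distinct clumps are \emph{independent}. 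Since each clump has some size $s_1 + \cdots + s_r = t$, and a clump containing a given "first" edge lives in a region of radius $O(sm)$, the number of ways to choose a clump of size $s$ anchored near a fixed edge is at most $(C(sm)^2)^{s-1}$, while the number of anchor positions in $Ann(n,2n)$ is $O(n^2)$.

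\medskip

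\emph{Step 2: Estimate the probability of a single clump.} For a clump $\{f_1,\dots,f_s\}$ contained in a ball of radius $R = O(sm)$, we bound ${\P}(X_{f_1} = \cdots = X_{f_s} = 1)$ by ${\P}(X_{f_1}=1)$, i.e. by the probability that $f_1$ alone has the four-arm property to distance $m$ and $\tau_{f_1} \in [p_c,p]$. Since $p \le p_n$ when $m \le L(p)$ (using the definition of $p_n$), the weight event has probability $p - p_c \le p_n - p_c$, and it is independent of the four-arm event (which is a $p_c$-event). By~(\ref{ineqKesten}), ${\P}(X_{f_1}=1) \le (p_n - p_c)\,{\P}_{cr}(A^{2,2}_m) \asymp m^{-2}$. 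Actually, to get the right power of $n/m$ we should be slightly more careful: bound the clump probability by the product over the clump edges of the single-edge four-arm-plus-weight probability \emph{only for edges that are $m$-separated within the clump}; but the crude bound ${\P}(X_{f_1}=1) \le C_2 (p_n-p_c)\,{\P}_{cr}(A^{2,2}_m)$ for just one representative of each clump, combined with the combinatorial count, already gives the answer. Indeed, one clump of size $s$ anchored at a fixed edge contributes at most $(C_3 (sm)^2)^{s-1} \cdot C_2 (p_n-p_c){\P}_{cr}(A^{2,2}_m) \le (C_3 (sm)^2)^{s-1} \cdot C_4 m^{-2}$.

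\medskip

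\emph{Step 3: Sum over clump patterns.} Summing over the location of each clump's anchor (a factor $\le C_5 n^2$ per clump) and over the $s_i \ge 1$ with $\sum s_i = t$, using independence across the $r$ clumps, yields
\[
{\E}(N^t) \le \sum_{r=1}^{t} \frac{1}{r!}\sum_{\substack{s_1+\cdots+s_r=t\\ s_i\ge 1}} \binom{t}{s_1,\dots,s_r} \prod_{i=1}^{r} \Big( C_5 n^2 \cdot (C_3(s_i m)^2)^{s_i-1}\cdot C_4 m^{-2}\Big).
\]
Each factor is at most $C_6 (n/m)^2 (C_3 m^2)^{s_i-1} s_i^{2(s_i-1)} \le (C_7 (n/m)^2)^{s_i}\, s_i^{2 s_i}\cdot (\text{absorb } m^{2(s_i-1)}\text{ against one } (n/m)^2$, using $m\le n)$. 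A standard bookkeeping of the multinomial coefficients and the bound $s_i^{2s_i} \le$ (const)$^{s_i} s_i!$ (Stirling), together with $\sum s_i = t$, collapses the whole sum to $t! (C_1 n/m)^{2t}$, as claimed, possibly after enlarging $C_1$.

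\medskip

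\emph{What I expect to be the main obstacle.} The combinatorics of Step~3 — keeping track of the $(sm)^{2(s-1)}$ factor coming from placing $s-1$ further edges in a ball of radius $sm$, and showing that after pairing it against the $n^2/m^2$ factors it still telescopes to $t!(C_1 n/m)^{2t}$ rather than something like $(C_1 n)^{2t}$ or with an extra $t^{ct}$ — is the delicate part. The key saving is that each additional edge in a clump costs a combinatorial factor $\asymp m^2$ but also forces (via being four-arm-connected in a region of size comparable to its neighbor's) no \emph{extra} probabilistic gain; balancing "one four-arm event per clump, paid for by $(p_n-p_c){\P}_{cr}(A^{2,2}_m)\asymp m^{-2}$" against "one location-factor $n^2$ per clump" is what produces exactly $(n/m)^{2t}$ with only a $t!$ (not $(t!)^{c}$ or $t^{3t}$) overhead. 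I would double-check that the alternation/disjointness of the four colored arms does not interfere with the independence-across-clumps argument — it does not, since each $X_e$ still depends only on the configuration within distance $m$ of $e_x$, and the $p_c$-closed arms only involve $\tau$-values, which are shared between primal and dual but still localized.
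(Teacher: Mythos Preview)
Your Step~2 is where the argument breaks. Bounding the probability of a clump of size $s$ by a \emph{single} four-arm event --- i.e.\ by ${\P}(X_{f_1}=1)\le C m^{-2}$ --- throws away the probabilistic constraint coming from the other $s-1$ edges, while you still pay the full combinatorial price $(C(sm)^2)^{s-1}$ for placing them. Carrying out your Step~3 precisely, the term corresponding to a single clump of size $t$ (so $r=1$, $s_1=t$) is of order
\[
n^2\,(Ctm)^{2(t-1)}\,m^{-2}\;=\;C^{t}\,n^{2}\,m^{2(t-2)}\,t^{2(t-1)},
\]
and for this to be bounded by $t!(C_1 n/m)^{2t}$ you would need $m^{4(t-1)}\le C\,n^{2(t-1)}$, i.e.\ $m^2\le Cn$. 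The lemma only assumes $m\le n$, so already for $m=n$ and $t=2$ your bound on the second moment is of order $n^{2}$, not $O(1)$. Your parenthetical ``absorb $m^{2(s_i-1)}$ against one $(n/m)^2$, using $m\le n$'' does not work: $m\le n$ gives $(n/m)^2\ge 1$, which provides no room to absorb a factor that can be as large as $n^{2(s_i-1)}$.

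What the paper (via Kesten's Theorem~8 argument) actually uses is that \emph{every} edge $e_i$ in the tuple carries its own four-arm event and its own weight condition $\tau_{e_i}\in[p_c,p]$. One organizes the $t$ edges hierarchically by pairwise distance: if the nearest neighbour of $e_i$ in the tuple is at distance $d_i$, then $e_i$ has four arms to scale $\asymp d_i$, and the boxes containing groups of edges have further four arms at larger scales, all the way up to $m$. Quasi-multiplicativity of the alternating four-arm probability (together with \cite[Lemma~6.3]{DSV} to pass between $p$-open and $p_c$-open arms on scales $\le L(p)$) lets one factor the joint probability as a product over this tree, and the weight factor $(p-p_c)$ at each edge pairs with the corresponding short-scale four-arm probability via~(\ref{ineqKesten}) to give a $\asymp d_i^{-2}$ per edge. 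This is exactly what cancels the $d_i^{2}$ combinatorial factor for placing $e_i$ near its neighbour; after that cancellation one is left with $(Cn/m)^{2}$ per edge and the $t!$ from ordering. The point you are missing is that the savings must come at \emph{every} edge, not once per clump.
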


\begin{proof}
The proof is very similar to the proof of the upper bound in Theorem 8 in \cite{KestenIIC}, where we need to use \cite[Lemma~6.3]{DSV} to deal with $p$-open paths.
We omit the details.
\end{proof}

To continue the proof of Theorem~\ref{outletmomentthm}, define for $n \geq 1$ and $k$ with $0 \leq k \leq \log^* n$, the event 

\begin{equation}
\label{Hnkdefeq}
H_{n,k} = \left\{
\begin{array}{c}
\textrm{There exists a }p_n(k)- \textrm{open circuit in }Ann(n/4,n/2)\\
\textrm{which is connected to infinity by a }p_n(k)- \textrm{open path.}
\end{array} \right\},
\end{equation}
where $p_n(k)$ is defined in (\ref{pdefgen}).  Let us decompose the $t^{th}$ moment of $O(n,2n)$ according to the events $H_{n,k}$.  By (\ref{corineq}) and (\ref{eqExpDecay}), there exists $C_2, C_3$ such that for all $n,k$,

\begin{equation}
\label{expdecay}
{\P}(H_{n,k}^c) \leq C_2(\log ^{(k-1)}n)^{-C_*C_3}.
\end{equation}
Writing $n_k = \frac{n}{C_*\log^{(k)}n}$ and using the Cauchy-Schwarz inequality for $1<k< \log^* n$,

\[ {\E}(O(n,2n)^t;H_{n,k},H_{n,k+1}^c) \leq ({\E}(N(n_k,n,p_n(k)))^{2t})^{1/2} ({\P}(H_{n,k+1}^c))^{1/2} \]

\[ \leq ((2t)!)^{1/2}(C_1C_*\log ^{(k)}n)^{2t} C_2^{1/2}(\log ^{(k)} n)^{-\frac{C_*C_3}{2}} = (C_2(2t)!)^{1/2}(C_*C_1)^{2t} (\log ^{(k)} n)^{\frac{4t-C_*C_3}{2}}. \]
Choosing $C_* = \frac{4t+2}{C_3}$, this becomes

\[ (C_2(2t)!)^{1/2} \left( \frac{(4t+1)C_1}{C_3} \right) ^{2t} (\log ^{(k)} n)^{-1} \leq (C_4t)^{3t} (\log ^{(k)} n)^{-1} \]
for some $C_4$.  For the case $k=1$, we have

\[ {\E}(O(n,2n)^t;H_{n,1}^c) \leq n^{2t} {\P} (H_{n,1}^c) \leq \frac{C_2}{n} \leq C_2. \]
If we sum over $k$ and bound $\sum_{k=1}^{(\log ^* n) -1} (\log ^{(k)} n)^{-1}$ independent of $n$ as in  \cite[(2.26)]{Jarai}, we get

\[ {\E}(O(n,2n)^t) \leq (Ct)^{3t}. \]
This completes the proof of (\ref{outletmomenteq1}).  To show (\ref{outletmomenteq2}), one must only use Jensen's inequality and (\ref{outletmomenteq1}):

\[ {\E}(O(n,2n)^{t/3}) \leq (Ct)^t. \]
This implies (\ref{outletmomenteq2}). \qed

\subsection{Proof of Theorem~\ref{outletboxthm}}
\label{outletboxsec}
\begin{proof}[Proof of upper bound]
Consider the event $A$ that, for all large $n$, for all $1\leq i\leq n$, the annulus $Ann(2^i,2^{i+c\log n})$ contains
a $p_c$-open circuit around the origin.
Note that ${\mathbb P}(A)=1$ for large enough $c$. We assume that $c$ is an integer. Then $2^{c\log n} = n^c$ is an integer too.

In the annulus $Ann(2^i,2^{i+2c\log n + 1})$, we define the graph ${\mathcal G}^n_i$ as follows.
Let ${\mathcal U}$ be the union of $p_c$-open clusters in $Ann(2^i,2^{i+2c\log n + 1})$ attached to $\partial B(2^{i + 2c\log n +1})$.
In particular, we assume that all the sites in $\partial B(2^{i + 2c\log n +1})$ are in ${\mathcal U}$.
If ${\mathcal U}$ contains a path from $B(2^i)$ to $\partial B(2^{i + 2c\log n +1})$, we define ${\mathcal G}^n_i$ as ${\mathcal U}$.
Otherwise, we consider the invasion percolation cluster ${\mathcal I}$ in $Ann(2^i,2^{i+2c\log n + 1})$ of the invasion percolation process with
$G_0 = B(2^i)$ (that is $B(2^i)$ is assumed to be invaded at step $0$)
terminated at the first time a site from ${\mathcal U}$ is invaded,
and define ${\mathcal G}^n_i$ as ${\mathcal I}\cup{\mathcal U}$.

We say that an edge $e$ is {\it disconnecting} for ${\mathcal G}^n_i$,
if the graph ${\mathcal G}^n_i\setminus\{e\}$ does not contain a path from $B(2^i)$ to $\partial B(2^{i + 2c\log n +1})$.
Let $X^n_i$ be the number of disconnecting edges for ${\mathcal G}^n_i$ in $Ann(2^{i+c\log n},2^{i+c\log n + 1})$.

Note that if the event $A$ occurs then, for all large $n$,
$X^n_i$ dominates $O(2^{i+c\log n},2^{i+c\log n + 1})$, 
the number of outlets of the IPC ${\mathcal S}$ of the origin in $Ann(2^{i+c\log n},2^{i+c\log n + 1})$.

Moreover, for any $i < \lfloor 3c\log n\rfloor$, $(X^n_{i+ k \lfloor 3c\log n\rfloor})_{k=0}^{ \lfloor n/3c\log n\rfloor - 1}$ are independent.

The reader can verify that the proof of Theorem~\ref{outletmomentthm} is
valid when the number of outlets is replaced with $X^n_i$.
Therefore, there exist constants $\lambda>0$ and $C_5 < \infty$ so that, for all $n$ and $i$,
\[{\mathbb E} \exp(\lambda (X^n_i)^{1/3}) < C_5.\]

Let $Y_i$ be a sequence of independent integer-valued random variables with ${\mathbb P}(Y_i>n) = \min\{1,C_5e^{-\lambda n^{1/3}}\}$.
Then, for any $i < \lfloor 3c\log n \rfloor$, $(X^n_{i+ k \lfloor 3c\log n\rfloor})_{k=0}^{\lfloor n/3c\log n\rfloor - 1}$ is stochastically dominated by $(Y_k)_{k=0}^{\lfloor n/3c\log n\rfloor - 1}$.
In particular,
\[
{\mathbb P}(\sum_{i=1}^n X^n_i > C_6 n)
\leq 3c\log n {\mathbb P}(\sum_{i=1}^{\lfloor n/3c\log n\rfloor -1}Y_i > C_6n/3c\log n)
\leq C_7\log n \exp(-C_8 n^{C_9}).
\]
The last inequality follows, for example, from \cite{Nagaev}.

Therefore, with probability one, for all large $n$, $\sum_{i=1}^n X^n_i \leq C_6 n$.

Note that, if the event $A$ occurs, then, for all large $n$,
\[O(2^{c\log n},2^n) \leq \sum_{i=1}^n X^n_i \leq C_6 n.\]

Finally, since the event $A$ occurs with probability one,
\[O(2^n) \leq O(2^{c\log n},2^n) + O(2^{c\log(c \log n)},2^{c\log n}) + |B(2^{c\log(c\log n)})| \leq C_{10} n.\]
This completes the proof of the upper bound in (\ref{outletboxeq}).
\end{proof}

\bigskip
\begin{proof}[Proof of lower bound]

For $i \geq 1$, let $G_i$ be the event that there is no $p_{2^i}$-closed dual circuit
around the origin with radius larger than $2^{i+\log i}$, and let
$G$ be the event that $G_i$ occurs for all but finitely many $i$. It
is easy to see (using inequality (\ref{eqExpDecay})) that ${\mathbb P}(G)=1$.

For $i \geq 1$, let $K_i$ be the event that
\begin{itemize}
\item[-]
there exists a $p_{2^i}$-closed dual circuit ${\mathcal C}$ around the
origin in $Ann(2^i,2^{i+1})^*$;
\item[-]
there exists a $p_c$-open circuit ${\mathcal C}'$ around the origin
in $Ann(2^i,2^{i+1})$;
\item[-]
the circuit ${\mathcal C}'$ is connected to $\partial B(2^{i+\log
i})$ by a $p_{2^i}$-open path.
\end{itemize}
See Figure~\ref{GiKifig} for an illustration of the event $G_i \cap K_i$.  Note that ${\mathcal C}'$ is in $B(2^{i+1})\cap ext({\mathcal C})$.
By RSW theorem and (\ref{eqNguyen}),
\[
{\mathbb P}(K_i) > C_{11} > 0,
\]
for some $C_{11}$ that does not depend on $i$.  

\begin{figure}
\begin{center}
\scalebox{.7}{\includegraphics[viewport = 0in 0in 4.25in 4.25in]{./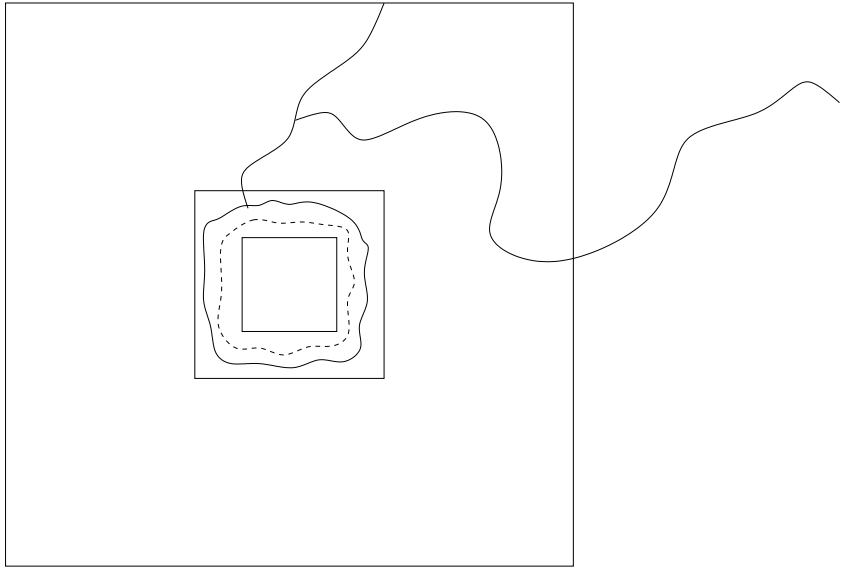}}
\end{center}
\caption{The event $G_i \cap K_i$.  The boxes, in order from smallest to largest, are $B(2^i)$, $B(2^{i+1})$, and $B(2^{i+\log i})$.  Because there is no $p_{2^i}$-closed circuit around the origin of radius larger than $2^{i+\log i}$, the $p_{2^i}$-open path which connects $\partial B(2^{i+\log i})$ to the circuit in $Ann(2^i, 2^{i+1})$ must be connected to $\infty$ by a $p_{2^i}$-open path.}
\label{GiKifig}
\end{figure}

Fix an integer $n$, and let $j$ be an integer between $1$ and $\log
n$. We consider events $K^j_i = K_{j + i\log n}$. Note that, for any
fixed $j$, the events $(K^j_i)_{i=0}^{(\lfloor n/\log n \rfloor)-1}$ are
independent.

Let $X^j_i = I_{K^j_i}$. Recall that ${\mathbb P}(X^j_i = 1) > C_{11}$.
We need the following lemma:
\begin{lemma}
\label{indvarslemma}
Let $c>0$. There exist $\alpha>0$ and $\beta<1$ depending on $c$
with the following property. If $X_i$ are independent $0/1$ random
variables (not necessarily identically distributed) with ${\mathbb
P}(X_i = 1)
> c$ for all $i$, then for all $n$,
\[
{\mathbb P}\left(\sum_{i=1}^n X_i < \alpha n\right) < \beta^n.
\]
\end{lemma}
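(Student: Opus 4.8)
The plan is to prove the lemma by a one-sided Chernoff estimate. The point is that although the $X_i$ need not be identically distributed, the hypothesis $\mathbb{P}(X_i = 1) > c$ controls the Laplace transform $\mathbb{E}[e^{-tX_i}]$ uniformly in $i$, which is all that a lower-tail large deviation bound requires.

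Concretely, I would fix a parameter $t>0$ (to be chosen depending only on $c$) and, for $\alpha>0$, apply the exponential Markov inequality to $-\sum_i X_i$:
\[
\mathbb{P}\left(\sum_{i=1}^n X_i \le \alpha n\right)
\le e^{t\alpha n}\prod_{i=1}^n \mathbb{E}\left[e^{-tX_i}\right].
\]
Writing $p_i = \mathbb{P}(X_i=1)$, we have $\mathbb{E}[e^{-tX_i}] = 1 - p_i(1-e^{-t})$, and since $1-e^{-t}>0$ and $p_i > c$ this is at most $1-c(1-e^{-t})$. Hence
\[
\mathbb{P}\left(\sum_{i=1}^n X_i \le \alpha n\right)
\le \Big(e^{t\alpha}\big(1-c(1-e^{-t})\big)\Big)^n .
\]

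It then remains to choose $t$ and $\alpha$ making the base strictly less than $1$. Set $\beta(t,\alpha) = e^{t\alpha}(1-c+ce^{-t})$; then $\beta(0,\alpha)=1$ and $\partial_t\beta(t,\alpha)\big|_{t=0} = \alpha-c$, so for $\alpha := c/2$ the map $t\mapsto\beta(t,c/2)$ is strictly below $1$ for all sufficiently small $t>0$. Fixing one such $t=t(c)$ and putting $\beta = \beta(t(c),c/2)<1$ gives the claim with these $\alpha,\beta$, both depending only on $c$. Equivalently, one may optimize over $t$ and identify $\beta$ with $\exp(-I_c(c/2))$, where $I_c$ is the Cram\'er rate function of a $\mathrm{Bernoulli}(c)$ variable (positive at $c/2$); or one may first use stochastic domination to replace each $X_i$ by an independent $\mathrm{Bernoulli}(c)$ variable $Y_i$, so that $\mathbb{P}(\sum X_i < \alpha n)\le\mathbb{P}(\sum Y_i < \alpha n)$, and then invoke the classical i.i.d. lower-tail Chernoff bound.

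I do not expect any genuine obstacle here: the estimate is elementary, and the only thing worth flagging is that the uniform lower bound $p_i\ge c$ is precisely what lets us bound the product $\prod_i\mathbb{E}[e^{-tX_i}]$ without an identical-distribution hypothesis.
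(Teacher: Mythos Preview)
Your proof is correct and essentially identical to the paper's: both apply the exponential Markov (Chernoff) inequality to get $\mathbb{P}(\sum X_i < \alpha n)\le e^{t\alpha n}\prod_i \mathbb{E}[e^{-tX_i}]$, bound the moment generating factors uniformly via $p_i>c$, and then choose the parameters so the base is strictly below $1$. The only cosmetic difference is the order of selection---the paper fixes $\lambda$ large first and then picks $\alpha$ small, whereas you fix $\alpha=c/2$ first and then pick $t$ small---but the argument is the same.
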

We first show how to deduce the lower bound of (\ref{outletboxeq}) from this lemma. It
follows that there exist $\alpha > 0$ and $\beta < 1$ such that for
any $n$ and $1\leq j\leq \log n$
\[
{\mathbb P}\left(\sum_{i=0}^{\lfloor n/\log n \rfloor - 1} X^j_i < \frac{\alpha
n}{\log n}\right) < \beta^{n/\log n}.
\]
Therefore,
\[
{\mathbb P}\left(\sum_{j=1}^{\log n} \sum_{i=0}^{\lfloor n/\log n \rfloor - 1} X^j_i
< \alpha n\right) \leq {\mathbb P}\left(\sum_{i=0}^{\lfloor n/\log n \rfloor - 1}
X^j_i < \alpha n/\log n~~\mbox{for some}~j\in[1,\log n]\right) \leq
\log n \beta^{n/\log n}.
\]
In particular, it follows from Borel-Cantelli's lemma that, with
probability one, for all large $n$,
\[
\sum_{i=1}^n I_{K_i} \geq \alpha n.
\]
Finally, observe that the event $G$ occurs with probability one, and
the event $G_i\cap K_i$ implies that there exists an outlet in
$Ann(2^i,2^{i+1})$.  The lower bound in (\ref{outletboxeq}) follows.
\end{proof}

\begin{proof}[Proof Lemma~\ref{indvarslemma}]
Chernov's inequality and the independence of the variables in the set $\{ X_i~:~i \geq 1\}$ give
\[
{\mathbb P}\left(\sum_{i=1}^n X_i < \alpha n\right)\leq
e^{\lambda\alpha n} \prod_{i=1}^n{\mathbb E}e^{-\lambda X_i}.
\]
It is easy to see that there exists $\gamma<1 $, independent of $i$,
such that
\[
{\mathbb E}e^{-\lambda X_i} < \gamma,
\]
for large enough $\lambda$. Now pick $\alpha$ such that
$e^{\lambda\alpha}\gamma<1$.
\end{proof}

\bigskip
\begin{proof}[Proof of Corollary~\ref{outletsascor}]
The inequalities (\ref{outletsaseq1}) follow immediately from those in Theorem~\ref{outletboxthm}.  Therefore we will only prove (\ref{outletsaseq2}).  First we show the upper bound.

Choose $c_5$ from (\ref{outletsaseq1}).  Using (\ref{corineq}) and (\ref{eqExpDecay}), we can show that with probability one, for all large $n$, after the invasion has reached $\partial B(n)$, the weight of each further accepted edge is no larger than $p_n(1)$, where $p_n(1)$ is defined in (\ref{pdefgen}).  Therefore, for all large $k$,

\[ \hat{\tau}_k - p_c \leq p_{(c_5)^k}(1) - p_c. \]
Since there exists $C_{12}, C_{13} >0$ such that for all $n$, $p_n(1) - p_c \leq C_{12}n^{-C_{13}}$ (use (\ref{ineqKesten}) and the fact that the 4-arm exponent is strictly smaller than 2 (see, e.g., Section~6.4 in \cite{Werner})), we have

\[ \hat{\tau}_k - p_c \leq C_{12} (c_5)^{-C_{13}k}, \]
proving the upper bound.  To show the lower bound, choose $c_6$ from (\ref{outletsaseq1}).  For $a<1$, we obtain
\begin{eqnarray*}
{\mathbb P}(\hat\tau_k < p_c + a^k, \hat R_k < (c_6)^k) &\leq &{\mathbb
P}(B((c_6)^k)\stackrel{p_c + a^k}\longleftrightarrow \infty) \leq C_{14}
{\mathbb P}_{cr}(B((c_6)^k)\leftrightarrow \partial B(L(p_c+a^k)))\\ &\leq
&C_{15}\left(\frac{(c_6)^k}{a^{-C_{16}k}}\right)^{C_{17}} \leq C_{18} e^{-C_{19}k},
\end{eqnarray*}
for constants $C_{14}-C_{19}$, where the last inequality holds for small enough $a$. The second
inequality follows from (\ref{thetacorrineq}). The third one follows from, for example, \cite[eq. 11.90]{Grimmett} and the fact that $L(p) > (p - p_c)^{-\delta}$ for some $\delta>0$ (see,
e.g., Cor. 1 and eq. 2.3 from \cite{kesten}). Borel-Cantelli's lemma gives the lower bound of (\ref{outletsaseq2}).

\end{proof}

\subsection{Proof of Theorem~\ref{outletdensitythm}}\label{densitysec}
\noindent
Given any nonempty subinterval of $(0,1]$, we will show that with probability one, $\left( \frac{\hat\tau_{k+1}-p_c}{\hat\tau_k-p_c} \right)$ is in this subinterval for infinitely many $k$.  We will use the following fact.  From \cite[(4.35)]{kesten} it follows that, for any $a>0$,
$$
L(p_c + a\delta)\asymp L(p_c + \delta).
$$
The constants above depend on $a$ but do not depend on $\delta$ so long as $\delta$ is sufficiently small.

Pick a nonempty interval $[x,y] \subset (0,1]$ and choose $a,b>0$ such that

\[ a < b~~\mbox{and}~~bx<ay. \]

We consider the event $D_n$ that there exist $p_c$-open circuits around the origin in the annulus $Ann(n,2n)$; and in the annulus $A(4n,8n)$, and there exist two edges, $e_1 \in Ann(2n,3n)$ and $e_2 \in Ann(8n,9n)$ such that
\begin{itemize}
\item[-]
there is a $p_c$-open path connecting one of the ends of $e_1$ to $B(n)$, and
there is a $p_c$-open path connecting the other end of $e_1$ to $\partial B(8n)$;
\item[-]
there is a $p_c$-open path connecting one of the ends of $e_2$ to $B(4n)$, and
there is a $p_n$-open path connecting the other end of $e_2$ to infinity;
\item[-]
there is a $(p_n + b(p_n-p_c))$-closed path $P_1$ in the dual lattice inside $Ann(2n,3n)^*$ connecting the ends of $e_1^*$ such that
$P_1\cup\{e_1^*\}$ is a circuit around the origin;
\item[-]
there is a $(p_n+ ay(p_n-p_c))$-closed path $P_2$ in the dual lattice inside $Ann(8n,9n)^*$ connecting the ends of $e_2^*$ such that
$P_2\cup\{e_2^*\}$ is a circuit around the origin;
\item[-]
the weight $\tau_{e_1}\in (p_n + a(p_n-p_c), p_n + b(p_n-p_c))$, and the weight $\tau_{e_2} \in (p_n + bx(p_n-p_c), p_n + ay(p_n-p_c))$.
\end{itemize}

\begin{figure}
\begin{center}
\scalebox{.8}{\includegraphics[viewport = 0in 0in 4in 4in]{./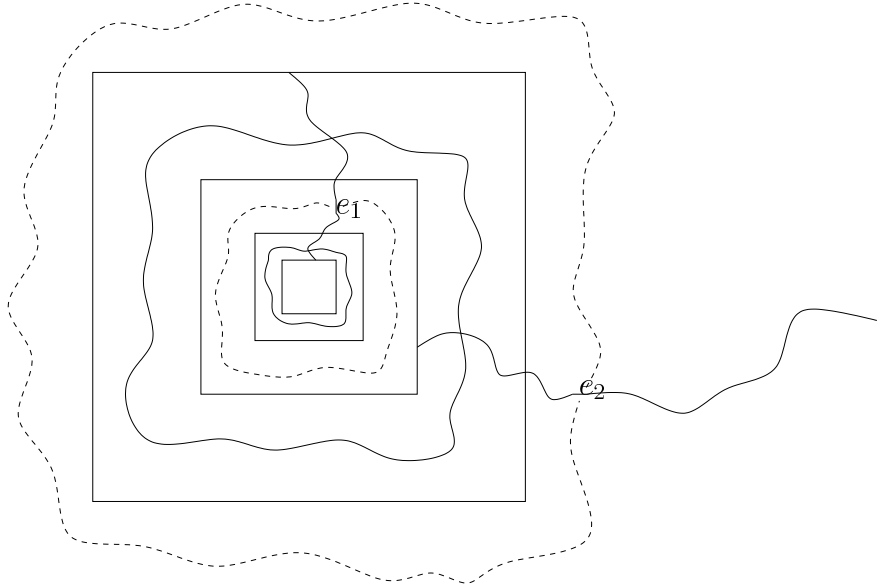}}
\end{center}

\caption{The event $D_n$.  The boxes, from smallest to largest, are $B(n), B(2n), B(4n)$, and $B(8n)$.  The edge $e_1$ is connected to both $B(n)$ and $\partial B(4n)$.  The edge $e_2$ is connected to both $B(4n)$ and infinity.  The solid curves represent occupied paths and the dotted curves represent vacant dual paths.  In the figure, both $e_1$ and $e_2$ are outlets of the invasion.}
\label{Dnfig}
\end{figure}

See Figure~\ref{Dnfig} for an illustration of the event $D_n$.  By RSW arguments and \cite[Lemma 6.3]{DSV} (similar to the proof of \cite[Corollary~6.2]{DSV}), there exists a constant $C_{20} >0$ which depends on $a,b,x,$ and $y$ but not on $n$ such that

\begin{equation}
\label{densityeq1}
{\mathbb P}(D_n) \geq C_{20}.
\end{equation}

We consider the event $\limsup_n D_n$ that there are infinitely many $n$
for which $D_n$ occurs. Since $\limsup_n D_n$ does not depend on the
states of finitely many edges, ${\mathbb P}(\limsup_n D_n) \in \{0,1\}$.
Assume that this probability is $0$.  Then there exists $N$ (deterministic) such that

\[ {\P}(D_n \textrm{ occurs for some } n \geq N) < C_{20}/2. \]
But this probability is, in fact, at least ${\P}(D_N)$.  This contradicts (\ref{densityeq1}).  Therefore

\begin{equation}
\label{boreleq1}
{\mathbb P}(\limsup_nD_n) = 1.
\end{equation}

Note that the event $D_n$ implies that there exists $k = k(n)$ such that $e_1$ and $e_2$ are respectively the $k^{th}$ and $(k+1)^{st}$ outlets of the invasion.  In particular, using the above bounds for $\tau_{e_1}$ and $\tau_{e_2}$,

$$
\frac{\hat\tau_{k+1}-p_c}{\hat\tau_{k}-p_c} \in [x,y].
$$
Combining this with (\ref{boreleq1}), we get

$$
{\P} \left( \frac{\hat\tau_{k+1}-p_c}{\hat\tau_k - p_c} \in [x,y] \textrm{ for infinitely many } k \right) = 1.
$$
This completes the proof.

\section{Proof of Theorem~\ref{thmIIC}}\label{IICsec}
\noindent

Since the proof is very similar to the proof of Theorem (3) in \cite{KestenIIC}, we only sketch the main ideas.
From now on we fix $\sigma\in\{\mbox{open},\mbox{closed}\}^{|\sigma|}$, and assume that
$\sigma$ consists of $m$ `open' and $m$ `closed'. 

The RSW theorem implies that there exists $\delta>0$ such that for all $N$,
\[
{\mathbb P}_{cr}(\mbox{there exists an occupied circuit in}~Ann(N,2N)) \geq \delta.
\]
Since events depending on the state of edges in disjoint annuli are independent, we can find an increasing sequence $N_i$ such that
\[
\alpha_i
=
{\mathbb P}_{cr}(\mbox{there exists an occupied circuit in}~Ann(N_i,N_{i+1}))
\to 1,
\]
as $i\to \infty$.
We fix the sequence $N_i$ and write $A_i$ for $Ann(N_i,N_{i+1})$.

Let ${\mathcal C}$ be a (self-avoiding) circuit in $({\mathbb Z}^2,{\mathbb E}^2)$.
We say that ${\mathcal C}$ is \textit{occupied with $m$ defects} if all but $m$ edges of ${\mathcal C}$ are occupied. Let $F_i({\mathcal C}_{e_1,\ldots,e_m})$ be the event that there exists an occupied circuit with $m$ defects in $A_i$ around $B(N_i)$, and
moreover, the innermost such circuit is ${\mathcal C}$ with defected edges $e_1,\ldots,e_m$.
We also write
\[
F_i^{(m)} = \cup_{\mathcal C\subset A_i}\cup_{e_1,\ldots,e_m\in{\mathcal C}\cap{\mathbb E}^2}F_i({\mathcal C}_{e_1,\ldots,e_m}).
\]
Note that
\[
{\mathbb P}_{cr}(F_i^{(m)}) = \sum_{\mathcal C\subset A_i}\sum_{e_1,\ldots,e_m\in{\mathcal C}\cap{\mathbb E}^2}{\mathbb P}_{cr}(F_i({\mathcal C}_{e_1,\ldots,e_m})).
\]

Recall from Section~\ref{secIICs} that the number $l$ is defined so that $|\partial B(l)| > |\sigma|$.  Let $E$ be any event depending only on the state of edges in $B(r)$ (where we assume that $r>l$) and let $i$ be such that $r<N_i<N_{i+1}<n$.   
Then
\begin{eqnarray*}
{\mathbb P}_{cr}(E\cap\{B(l)\leftrightarrow_\sigma \partial B(n)\})
&=
&{\mathbb P}_{cr}(E\cap\{B(l)\leftrightarrow_\sigma \partial B(n)\}\cap (F^{(m)}_i)^c)\\
&+
&\sum_{\mathcal C\subset A_i}\sum_{e_1,\ldots,e_m\in{\mathcal C}\cap{\mathbb E}^2}
{\mathbb P}_{cr}(E\cap\{B(l)\leftrightarrow_\sigma \partial B(n)\}\cap F_i({\mathcal C}_{e_1,\ldots,e_m})).
\end{eqnarray*}
Let $\{B(l) \leftrightarrow_\sigma {\cal C}_{e_1,\ldots,e_m}\}$ denote the event that $B(l)$ is $\sigma$-connected to ${\cal C}$ so that the $m$ disjoint closed paths connect $B(l)$ to the edges $e_1^*,\ldots,e_m^*$ in $\stackrel{\circ}{{\cal C}}$ (interior of ${\cal C}$). Similarly, let $\{{\cal C}_{e_1,\ldots,e_m} \leftrightarrow_\sigma \partial B(n)\}$ denote the event that ${\cal C}$ is $\sigma$-connected to $\partial B(n)$ so that the $m$ disjoint closed paths connect $\partial B(n)$ to the edges $e_1^*,\ldots,e_m^*$ in $\bar{{\cal C}}^e$ (exterior of ${\cal C}$).

We now estimate the difference ${\mathbb P}_{cr}(E\cap\{B(l)\leftrightarrow_\sigma \partial B(n)\}\cap (F^{(m)}_i)^c)$
between
\[
{\mathbb P}_{cr}(E\cap\{B(l)\leftrightarrow_\sigma \partial B(n)\})
\]
and
\begin{equation*}
\sum_{\mathcal C\subset A_i}\sum_{e_1,\ldots,e_m\in{\mathcal C}\cap{\mathbb E}^2}
{\mathbb P}_{cr}(E\cap F_i({\mathcal C}_{e_1,\ldots,e_m})\cap \{B(l) \leftrightarrow_\sigma {\cal C}_{e_1,\ldots,e_m}\})
{\mathbb P}_{cr}({\mathcal C}_{e_1,\ldots,e_m} \leftrightarrow_\sigma \partial B(n)).
\end{equation*}
By Menger's theorem \cite[Theorem~3.3.1]{Diestel}, the event $\{B(l)\leftrightarrow_\sigma \partial B(n)\}\cap (F^{(m)}_i)^c$ implies that
there exist $(m+1)$ disjoint closed crossings of the annulus $A_i$.
We use Reimer's inequality \cite{Reimer} to conclude that
the probability
${\mathbb P}_{cr}(E\cap\{B(l)\leftrightarrow_\sigma \partial B(n)\}\cap (F^{(m)}_i)^c)$
is bounded from above by
\begin{eqnarray*}
&&{\mathbb P}_{cr}(B(l)\leftrightarrow_\sigma \partial B(n))
{\mathbb P}_{cr}(\mbox{there exists a closed crossing of}~A_i)\\
&&\leq
(1-\alpha_i){\mathbb P}_{cr}(B(l)\leftrightarrow_\sigma \partial B(n)).
\end{eqnarray*}

We have just shown how a statement similar to (17) in \cite{KestenIIC} is obtained.
An analogous statement to (18) in \cite{KestenIIC} is also valid.
The remainder of the proof is similar to the proof of Kesten \cite{KestenIIC},
where in the proof of the statement analogous to Lemma~(23) in \cite{KestenIIC} we use extensions of arm separation techniques from \cite[Section~4]{Nolin}.

We use the following analogue of Kesten's Lemma (23).
\begin{lemma}\label{l23}
Consider circuits $\mathcal C$ in annulus $A_i$, $\mathcal D$ in annulus $A_{i+3}$, sets of edges $e_1,\ldots,e_m$ on $\mathcal C$ and
$f_1,\ldots,f_m$ on $\mathcal D$ respectively.
Let $P(\mathcal C,\mathcal D)$ be the probability,
conditional on the event that all edges in $\mathcal C\setminus\{e_1,\ldots,e_m\}$ are open and $e_1,\ldots,e_m$ are closed, that
(1) there are disjoint closed dual paths from $e_i^*$ to $f_i^*$,
(2) there are $m$ disjoint open paths that connect $\mathcal C$ to $\mathcal D$ such that, for any two of them, there is a closed dual path (one of the paths from (1))
between them,
(3) $\mathcal D$ is the innermost open circuit with defects $f_1,\ldots,f_m$ in annulus $A_{i+3}$,
(4) there is an open circuit with $m$ defects in annulus $A_{i+2}$.
We similarly define $\mathcal C'$, $\mathcal D'$, etc.
There exists a finite constant $C_1$ that may depend only on $m$
(it does not depend on particular choice of circuits or defects) such that
\[
\frac{P(\mathcal C,\mathcal D)P(\mathcal C',\mathcal D')}{P(\mathcal C,\mathcal D')P(\mathcal C',\mathcal D)}<C_1.
\]
\end{lemma}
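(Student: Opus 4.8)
The plan is to reduce the statement to a factorization estimate. Concretely, I would show that, with comparability constants depending only on $m$,
\[
P(\mathcal C,\mathcal D)\ \asymp\ Q(\mathcal C)\cdot S\cdot R(\mathcal D),
\]
where $Q(\mathcal C)$ depends only on $\mathcal C$ and the defects $e_1,\dots,e_m$, where $R(\mathcal D)$ depends only on $\mathcal D$ and the defects $f_1,\dots,f_m$, and where the middle factor $S$ depends only on $m$ and the (fixed) radii $N_{i+1},N_{i+3}$, but on none of $\mathcal C$, $\mathcal D$ or their defects. Granting this, the same factorization applies to $P(\mathcal C',\mathcal D')$, $P(\mathcal C,\mathcal D')$ and $P(\mathcal C',\mathcal D)$ with the \emph{same} factor $S$, so that all of $Q$, $R$ and $S$ cancel in the ratio $\frac{P(\mathcal C,\mathcal D)P(\mathcal C',\mathcal D')}{P(\mathcal C,\mathcal D')P(\mathcal C',\mathcal D)}$, which is therefore bounded by a constant $C_1=C_1(m)$.

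To obtain the factorization I would cut $Ann(N_i,N_{i+4})$ along the two circles $\partial B(N_{i+1})$ and $\partial B(N_{i+3})$ into an inner region containing $\mathcal C$, a middle region equal to $A_{i+1}\cup A_{i+2}$ (hence containing the circuit required by condition~(4)), and an outer region containing $\mathcal D$ (hence the innermost-circuit requirement~(3)). These three regions use pairwise disjoint sets of edges, so the corresponding restrictions of the configuration are independent, and the conditioning on the states of the edges of $\mathcal C$ touches only the inner region. The event defining $P(\mathcal C,\mathcal D)$ fails to be a product event only because the $2m$ arms --- the $m$ open paths of~(2) and the $m$ closed dual paths of~(1) --- must be matched in the prescribed cyclic colour order where they cross the two cutting circles. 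I would sum over the crossing patterns at $\partial B(N_{i+1})$ and $\partial B(N_{i+3})$ and then apply the multi-arm separation and extension lemmas (the polychromatic analogues of the estimates in \cite[Section~4]{Nolin}) to replace, at the cost of a factor depending only on $m$, the full sum by the single term in which all $2m$ arms reach each cutting circle at a fixed, well-separated (``fenced'') family of positions. For that canonical landing pattern the event splits as an exact product: $Q(\mathcal C)$ is the probability of the conditioned inner configuration, with the arms running from $\mathcal C$ to $\partial B(N_{i+1})$ and ending canonically there; $R(\mathcal D)$ is the probability of the outer configuration, with the arms running from $\partial B(N_{i+3})$ to $\mathcal D$, started canonically, and with $\mathcal D$ the innermost open circuit with defects $f_1,\dots,f_m$; and $S$ is the probability of joining the two canonical patterns across $A_{i+1}\cup A_{i+2}$ by $m$ disjoint open and $m$ disjoint closed dual paths in the right alternating order, together with an open circuit with $m$ defects in $A_{i+2}$. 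The factor $S$ is positive by RSW and a routine gluing argument (the $m$ defects of the $A_{i+2}$-circuit are exactly the edges through which the $m$ closed dual arms cross), and by construction it does not depend on $\mathcal C$, $\mathcal D$ or their defects.

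The hard part will be the separation step, which is the only place where real work is needed beyond Kesten's treatment of his Lemma~(23) in \cite{KestenIIC}: one must show, uniformly in the scales and in the circuits, that the ``clustered'' crossing patterns contribute at most a constant (depending only on $m$) times the canonical one, while respecting the prescribed defects on $\mathcal C$ and $\mathcal D$ and the auxiliary conditions~(3)--(4). A subtlety is that $\mathcal C$ may lie close to $\partial B(N_{i+1})$ (and $\mathcal D$ close to $\partial B(N_{i+3})$), leaving no room for fences right at those circles; I would handle this by building the separating fences inside the free annulus $A_{i+1}$, and near $A_{i+2}$ by using the circuit of condition~(4) to regularise the arms rather than constructing fences there from scratch. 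All of this is carried out by adapting the arm-separation machinery of \cite[Section~4]{Nolin} to the circuit-with-defects boundary condition, exactly as in Kesten's original argument. Since the separation constants are uniform in the radii for all large $i$ and there are only finitely many circuits at each of the remaining scales, a single $C_1=C_1(m)$ works.
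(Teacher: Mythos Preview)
Your approach is correct in spirit---a factorization $P(\mathcal C,\mathcal D)\asymp Q(\mathcal C)\cdot S\cdot R(\mathcal D)$ via arm separation is indeed the mechanism---but the paper organizes the argument differently, and the difference is worth noting because it sidesteps precisely the complication you flag at the end.

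Rather than factoring $P$ symmetrically across two cuts and absorbing conditions~(3) and~(4) into $R(\mathcal D)$ and $S$, the paper first proves the ratio bound for a \emph{simpler} quantity $M(\mathcal C_1,\mathcal D_1)$ (circuits in $A_i$ and $A_{i+2}$, alternating arms between them, \emph{no} innermost condition, \emph{no} auxiliary circuit in $A_{i+2}$). That bound follows directly from the external and internal arm-separation lemmas plus RSW and generalized FKG, with no interference from~(3) or~(4). Then, to pass to $P$, the paper sums over the \emph{outermost} circuit-with-defects $\mathcal C_1$ in $A_{i+2}$, writing
\[
P(\mathcal C,\mathcal D)=\sum_{\mathcal C_1} M(\mathcal C,\mathcal C_1)\,H(\mathcal C_1,\mathcal D),
\]
where $H$ carries conditions~(3) and~(4) together with the arms from $\mathcal C_1$ to $\mathcal D$. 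In the product $P(\mathcal C,\mathcal D)P(\mathcal C',\mathcal D')$ one applies the $M$-ratio bound termwise to swap $\mathcal C\leftrightarrow\mathcal C'$; the $H$-factors ride along untouched and recombine to $P(\mathcal C',\mathcal D)P(\mathcal C,\mathcal D')$. So the innermost condition on $\mathcal D$ never meets the arm-separation machinery at all.

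Your route, by contrast, requires running arm separation on the outer event that \emph{includes} the innermost condition on $\mathcal D$. This is the subtlety you identify but do not fully resolve: the separation surgery carried out near $\partial B(N_{i+3})$ lives inside $A_{i+3}$ and can in principle create or destroy a competing circuit-with-defects there, disturbing the innermost status of $\mathcal D$. One can almost certainly push this through with additional care (e.g.\ by moving the outer cut inward and using an intermediate circuit as a buffer---which is essentially what the paper does), but the paper's one-sided decoupling via the outermost $\mathcal C_1$ in $A_{i+2}$ is the cleaner way to avoid the issue entirely.
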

To prove Lemma~\ref{l23}, we need the following extension of Kesten's arm separation Lemmas~4 and 5 \cite{kesten}.
Let $\mathcal I$ be a fixed partition of $\partial B(1)$ (in $\mathbb R^2$) into $2m$ disjoint connected subsets $\mathcal I_i$, each of diameter at least $1/(2m)$ (ordered clockwise).
Let $\mathcal I(s)$ be the corresponding partition of $\partial B(s)$ into $2m$ disjoint connected subsets
$\mathcal I_i(s) = s \mathcal I_i = \{s x~:~x\in{\mathcal I}_i\}$.
Let $\mathcal I(n,n')$ be the partition of $\overline{Ann(n,n')}$ into $2m$ disjoint connected subsets
$\mathcal I_i(n,n') = \cup_{n\leq s\leq n'} \mathcal I_i(s)$.
\begin{lemma}[external arm separation]\label{lKesten}
Let $n_0$ and $n$ be positive integers with $n_0 \leq n-3$.
We consider a circuit $\mathcal C$ in $B(2^{n_0})$ and a set of edges $e_1,\ldots,e_m$ on $\mathcal C$.
Let $E(\mathcal C_{e_1,\ldots,e_m})$ be the event that
(1) the edges in $\mathcal C\setminus\{e_1,\ldots,e_m\}$ are open and $e_1,\ldots,e_m$ are closed,
(2) there are $m$ disjoint closed dual paths from $e_j^*$ to $\partial B(2^n)^*$,
(3) there are $m$ disjoint open paths from $\mathcal C$ to the boundary of $B(2^n)$ in $\left( B(2^n)\setminus \mbox{int}(C)\right)\setminus\{e_1,\ldots,e_m\}$
such that these paths alternate with the closed paths defined in (2).
Let $\widetilde E(\mathcal C_{e_1,\ldots,e_m})$ be the event that
$E(\mathcal C_{e_1,\ldots,e_m})$ occurs with $2m$ paths $P_1,\ldots, P_{2m}$ (ordered clockwise, all paths with odd indices are closed, and the ones with even indices are open) satisfying the requirement that, for all $1\leq i\leq 2m$, $P_i \cap \overline{Ann(2^{n-1},2^n)} \subset \mathcal I_i(2^{n-1},2^n)$.
Then
\[
{\mathbb P}(E(\mathcal C_{e_1,\ldots,e_m}))
\leq C_2
{\mathbb P}(\widetilde E(\mathcal C_{e_1,\ldots,e_m})),
\]
where the constant $C_2$ may depend on $m$ but not on $n$, $n_0$, or the choice of circuit.
\end{lemma}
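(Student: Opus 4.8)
The plan is to argue as in Kesten's arm-separation Lemmas~4 and~5 \cite{kesten}, in the generalised form of \cite[Section~4]{Nolin}, adapted to our coloured setting of alternating open and closed arms and to the presence of the fixed innermost circuit $\mathcal C$. Since $\widetilde E(\mathcal C_{e_1,\ldots,e_m})\subset E(\mathcal C_{e_1,\ldots,e_m})$, it suffices to produce a constant $c=c(m)>0$, independent of $n$, $n_0$, the circuit $\mathcal C$ and the defects, such that
\[
{\mathbb P}\big(\widetilde E(\mathcal C_{e_1,\ldots,e_m})\big)\;\geq\; c\,{\mathbb P}\big(E(\mathcal C_{e_1,\ldots,e_m})\big).
\]
Throughout we use that $n_0\leq n-3$, so that the states of the edges of $\mathcal C$, and more generally of $B(2^{n_0})$, are independent of every configuration we modify below, all of which lives in $Ann(2^{n_0},2^{n})$.

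I would realise $\widetilde E(\mathcal C_{e_1,\ldots,e_m})$ as an intersection $B\cap D$ of two events supported on disjoint sets of edges. For each $i$ let $\mathcal A_i$ be a short sub-arc of the arc where $\mathcal I_i$ meets $\partial B(2^{n-1})$. Let $B$ be the event, depending only on edges in $B(2^{n-1})$, that the edges of $\mathcal C\setminus\{e_1,\ldots,e_m\}$ are open, $e_1,\ldots,e_m$ are closed, and there exist $2m$ disjoint cyclically ordered paths from $\mathcal C$ to $\partial B(2^{n-1})$ — the odd-indexed ones closed dual paths issuing from the $e_j^*$, the even-indexed ones open paths in $B(2^{n-1})\setminus\mathrm{int}(\mathcal C)$ avoiding the defects — such that the $i$-th path meets $\partial B(2^{n-1})$ inside $\mathcal A_i$. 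Let $D$ be the event, depending only on edges in $\overline{Ann(2^{n-1},2^{n})}$, that for each $i$ there is a path of the $i$-th colour joining $\mathcal A_i$ to $\partial B(2^{n})$ and staying inside a fixed corridor contained in $\mathcal I_i(2^{n-1},2^{n})$. On $B\cap D$ one concatenates, for each $i$, the inner path with the corridor path, obtaining $2m$ disjoint paths that realise $E(\mathcal C_{e_1,\ldots,e_m})$ and whose portions in $\overline{Ann(2^{n-1},2^{n})}$ lie in the $\mathcal I_i(2^{n-1},2^{n})$; thus $B\cap D\subset\widetilde E(\mathcal C_{e_1,\ldots,e_m})$, and by independence ${\mathbb P}(\widetilde E)\geq{\mathbb P}(B)\,{\mathbb P}(D)$.

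It remains to bound ${\mathbb P}(D)$ and ${\mathbb P}(B)$ from below. The bound ${\mathbb P}(D)\geq c'(m)>0$ is a routine RSW-and-FKG construction: one tiles $\overline{Ann(2^{n-1},2^{n})}$ by $2m$ pairwise disjoint corridors, the $i$-th inside $\mathcal I_i(2^{n-1},2^{n})$, builds an open crossing in each even corridor and a closed dual crossing in each odd corridor, and, if desired, inserts thin open or dual-closed separating barriers between consecutive corridors so that the concatenations above are genuinely disjoint and alternating; all these events are supported on disjoint edge sets, hence independent, and each has probability bounded below by RSW since the corridors have bounded aspect ratio. For ${\mathbb P}(B)$, observe first that restricting the $2m$ arms of $E(\mathcal C_{e_1,\ldots,e_m})$ to $B(2^{n-1})$ shows that ${\mathbb P}(E(\mathcal C_{e_1,\ldots,e_m}))$ is at most the probability of the corresponding $2m$-alternating-arm event from $\mathcal C$ to $\partial B(2^{n-1})$ with unconstrained landing points; then one applies the coloured ``separation at the outer boundary'' estimate — precisely the extension of the techniques of \cite[Section~4]{Nolin} alluded to in the text — asserting that demanding that these arms land inside the prescribed short arcs $\mathcal A_i$, in the correct cyclic order and colours, costs only a factor depending on $m$. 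This gives ${\mathbb P}(B)\geq c''(m)\,{\mathbb P}(E(\mathcal C_{e_1,\ldots,e_m}))$, and the claim follows with $c=c'(m)\,c''(m)$.

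The main obstacle is the lower bound on ${\mathbb P}(B)$, i.e. the coloured outer-boundary separation estimate for an arbitrary number $2m$ of alternating arms emanating from the fixed defected circuit $\mathcal C$. Its proof follows Kesten's scheme: whenever two arms of the same colour reach $\partial B(2^{n-1})$ close to one another, a gluing/rerouting construction moves them into the desired arcs, and a union bound over the bounded number of coarse landing patterns controls the loss; the two colours are treated symmetrically using RSW in the primal and dual lattices, and all crossing and separating-circuit events are localised in disjoint regions so that independence and the FKG inequality apply. Once this estimate is in place, the rest is bookkeeping, and the uniformity of $c$ in $n$, $n_0$ and the circuit is automatic, every bound resting ultimately on RSW crossing estimates for annuli of bounded aspect ratio, whose constants are universal.
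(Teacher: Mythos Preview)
Your decomposition $\widetilde E\supset B\cap D$ and the RSW bound on ${\mathbb P}(D)$ are fine, but the lower bound on ${\mathbb P}(B)$ is not a corollary of the standard separation lemmas in \cite{kesten} or \cite{Nolin}; it \emph{is} the lemma. The sentence ``one applies the coloured `separation at the outer boundary' estimate --- precisely the extension of the techniques of \cite[Section~4]{Nolin} alluded to in the text'' is circular: that extension is exactly what this lemma provides. The existing separation results treat arms issuing from a fixed unit box (or a point), whereas here the closed arms start from defects $e_1,\ldots,e_m$ at arbitrary positions on an arbitrary circuit $\mathcal C$, and the open arms are forced to start from the arcs of $\mathcal C$ between them. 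Your final paragraph (``whenever two arms of the same colour reach $\partial B(2^{n-1})$ close to one another, a gluing/rerouting construction moves them\ldots'') does not address this; in the alternating setting consecutive arms are of \emph{opposite} colours, and the obstruction is not merely that landing points can be close but that the fixed inner data constrain where the peeling argument can reroute.

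The paper's proof does the work you are skipping. It first conditions on the extremal paths $\gamma_i^l,\gamma_i^r$ inside $B(2^{n_0})$, which converts the event into one about crossings of a region $S$ whose boundary along $\partial B(2^{n_0})$ is partitioned into arcs $T_1,\ldots,T_{2m}$ of arbitrary relative sizes. It then builds a hierarchical family of dyadic annuli (in several stages) adapted to the geometry of the $T_i$, so that in each annulus only a bounded number of arms pass and a Kesten-type recursion \eqref{eq:peeling1}--\eqref{eq:peeling2} can be run. The genuinely new ingredient is the family of events $D_r$: because the peeling takes place in $S$ rather than in a clean annulus, the boundary curves $\gamma_{i-1},\gamma_{i+2}$ can ``trap'' an arm and leave no room in $\widetilde S_{r+1}\setminus\widetilde S_r$ for the extension step (Figure~\ref{fEventTrap}); the events $D_r$ furnish short shielding arcs near the entry points $x_r,y_r$ that reopen a corridor of width $\sim\sqrt\eta\,2^r$ (Figure~\ref{fEventDr}). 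None of this machinery is present in Kesten's or Nolin's statements, and your sketch gives no indication of how you would handle it.
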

\begin{remark}
The event $\widetilde E$ is reminiscent of the event $\Delta$ in \cite{kesten} (page~127 and Figure~8).
\end{remark}
\begin{remark}
It is actually believed \cite{gabor} and is the aim of ongoing work of Garban and Pete that a much
stronger statement holds: given any configuration inside $B(2^{n_0})$, if we
condition on the existence of $m$ open paths and $m$ closed dual paths from a neighborhood of the origin to $\partial B(2^n)$ and these paths are alternating, then they will be well-separated (refer to \cite{kesten} for this definition) on $\partial B(2^n)$ with positive probability independent of $n$ and the configuration inside $B(2^{n_0})$.
\end{remark}
\begin{lemma}[internal arm separation]\label{lKesten1}
Let $n$ and $n_1$ be positive integers with $n +3 \leq n_1$.
Consider a circuit $\mathcal D$ in $B(2^{n_1})^c$ and a set of edges $f_1,\ldots,f_m$ on $\mathcal D$.
Let $F(\mathcal D_{f_1,\ldots,f_m})$ be the event that
(1) the edges in $\mathcal D\setminus\{f_1,\ldots,f_m\}$ are open and $f_1,\ldots,f_m$ are closed,
(2) there are $m$ disjoint closed dual paths from $f_j^*$ to $B(2^n)^*$,
(3) there are $m$ disjoint open paths from $\mathcal D$ to $B(2^n)$ in $\overline {\mbox{int}(D)}$
such that these paths alternate with the closed paths defined in (2).
Let $\widetilde F(\mathcal D_{f_1,\ldots,f_m})$ be the event that
the event $F(\mathcal D_{f_1,\ldots,f_m})$ occurs with $2m$ paths $P_1,\ldots, P_{2m}$ (ordered clockwise, all paths with odd indices are closed, and the ones with even indices are open) satisfying the requirement that, for all $1\leq i\leq 2m$, $P_i \cap \overline{Ann(2^n,2^{n+1})} \subset \mathcal I_i(2^n,2^{n+1})$.
Then
\[
{\mathbb P}(F(\mathcal D_{f_1,\ldots,f_m}))
\leq C_3
{\mathbb P}(\widetilde F(\mathcal D_{f_1,\ldots,f_m})),
\]
where the constant $C_3$ may depend on $m$ but not on $n$, $n_1$, or the choice of circuit.
\end{lemma}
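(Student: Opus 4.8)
The plan is to prove Lemma~\ref{lKesten1} as the internal-boundary mirror image of Lemma~\ref{lKesten}: the two statements differ only in whether the separation of the arms is performed near the outer scale $2^n$ or near the inner scale $2^n$, and the same constants will work, so I would run the argument for Lemma~\ref{lKesten} and then indicate the (essentially notational) changes. Concretely, fix a small $\delta=\delta(m)>0$ and slightly enlarged, still pairwise disjoint, sectors $\widehat{\mathcal I}_i\supset\mathcal I_i$. Call a realization of $F(\mathcal D_{f_1,\ldots,f_m})$ \emph{inner-separated} if the $2m$ witnessing paths $P_1,\ldots,P_{2m}$ can be chosen so that the points at which they last enter $B(2^{n+1})$ lie, in clockwise order, in $\widehat{\mathcal I}_1(2^{n+1}),\ldots,\widehat{\mathcal I}_{2m}(2^{n+1})$, at pairwise distance at least $\delta 2^{n+1}$, with colors following the prescribed alternation, and so that each $P_i$ reaches that point through a ``fjord'' free of the other paths. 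The first step is to prove that there is a constant $C'=C'(m)$ with ${\mathbb P}(F(\mathcal D_{f_1,\ldots,f_m}))\leq C'(m)\,{\mathbb P}(F(\mathcal D_{f_1,\ldots,f_m})\cap\{\text{inner-separated}\})$, with the event on the right taken measurable with respect to the edges outside $B((1-\delta/2)2^{n+1})$.

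For this first step I would follow the $\delta$-separation technique of Kesten's Lemmas~4 and 5 in \cite{kesten}, in the polychromatic alternating-arm form worked out in \cite[Section~4]{Nolin}. Since the arm event mixes open and closed (dual) connections it is neither increasing nor decreasing, so FKG is not directly available; instead I would condition on the innermost occupied circuit with $m$ defects lying in a dyadic annulus just outside $B(2^{n+1})$, so that the conditional law in the region where separation is to be carried out is again an unbiased product measure, and then force the arms into a well-separated configuration using a bounded (depending only on $m$) number of RSW box-crossings, alternating open corridors and closed dual corridors, which can be kept disjoint because each $\mathcal I_i$ has diameter at least $1/(2m)$.

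The second step is the routing through $Ann(2^n,2^{n+1})$. On the inner-separated event, for each $i$ I would connect the point at which $P_i$ last enters $B(2^{n+1})$ to $B(2^n)$ by a monochromatic path (open if $i$ is even, closed in the dual lattice if $i$ is odd) confined to the sector $\mathcal I_i(2^n,2^{n+1})$; since this sector is a generalized quadrilateral all of whose dimensions are comparable to $2^n$ with ratios depending only on $m$, RSW gives such a confined crossing with probability at least some $c'(m)>0$, and since the $2m$ sectors are pairwise disjoint these $2m$ constructions use disjoint sets of edges, all disjoint from those that determine the inner-separated event and from $\mathcal D$ itself. Hence ${\mathbb P}(\widetilde F(\mathcal D_{f_1,\ldots,f_m}))\geq c'(m)\,{\mathbb P}(F(\mathcal D_{f_1,\ldots,f_m})\cap\{\text{inner-separated}\})$, and combining with the first step yields the lemma with $C_3=C'(m)/c'(m)$. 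This constant sees only $m$ and not $n$, $n_1$, or $\mathcal D$ and its defects, since $\mathcal D\subset B(2^{n_1})^c\subset B(2^{n+3})^c$ is untouched by either construction.

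I expect the first step to be the main obstacle. The geometric heart of the second step --- building $2m$ disjoint alternating corridors inside $2m$ disjoint angular sectors --- is routine RSW. The delicate point is making the conditioning-and-separation argument of Step~1 rigorous for a mixed event with $m$ open and $m$ closed arms emanating from a fixed circuit with prescribed defects, keeping the $2m$ arms pairwise disjoint throughout and verifying that all constants are genuinely uniform over the circuit $\mathcal D$, its defect edges, and the scales; this is precisely why one needs the extensions of the arm-separation lemmas of \cite{kesten} and \cite[Section~4]{Nolin} rather than their verbatim statements. The same argument with ``outer'' replaced by ``inner'' everywhere (and the innermost-circuit conditioning replaced by the analogous outermost one) gives Lemma~\ref{lKesten}.
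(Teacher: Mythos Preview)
Your overall plan---treat Lemma~\ref{lKesten1} as the inward mirror of Lemma~\ref{lKesten} and carry over the same Kesten-style arm-separation machinery---is exactly what the paper does: it states outright that the two proofs are similar and gives only the external case in detail. So at the level of strategy you are aligned with the paper.

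There is, however, a genuine gap in your Step~1. You propose to ``condition on the innermost occupied circuit with $m$ defects lying in a dyadic annulus just outside $B(2^{n+1})$, so that the conditional law in the region where separation is to be carried out is again an unbiased product measure.'' This is not correct: the event that a given circuit is the \emph{innermost} such circuit constrains the configuration \emph{inside} it (no other circuit with $m$ defects may lie closer in), so the conditional law in the interior is not product measure. More fundamentally, neither Kesten's Lemmas~4--5 nor their polychromatic extensions in \cite{Nolin} use an intermediate-circuit conditioning to run the separation; the peeling argument ${\mathbb P}(E_r)\le \delta\,{\mathbb P}(E_{r-3})+C\,{\mathbb P}(\widetilde E_r)$ is carried out directly. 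The circuit-with-defects conditioning in this paper appears in the IIC construction (Section~\ref{IICsec}), not in the arm-separation lemmas themselves, and you seem to have conflated the two.

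What the paper actually does for Lemma~\ref{lKesten} (and hence, by mirroring, for Lemma~\ref{lKesten1}) is more involved than your sketch. It first reduces the event to a statement about extremal paths $\gamma_1,\ldots,\gamma_{2m}$ determined near the boundary circuit, which encode where the arms must exit; it then builds a hierarchical family of disjoint annuli (in up to four stages) adapted to the geometry of the landing segments $T_i$ on $\partial B(2^{n_0})$; inside each such annulus it runs a Kesten-style peeling argument, but with auxiliary events $D_r$ that create short open/closed detours near the boundary paths $\gamma_{i-1},\gamma_{i+k}$ so that well-separated arms can actually be extended through the possibly narrow corridor $\widetilde S_{r+1}\setminus\widetilde S_r$ (this is the issue illustrated in the paper's Figure~\ref{fEventTrap}). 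Your Step~2 (RSW routing through disjoint sectors) matches the paper's gluing step, but your Step~1 needs to be replaced by this annulus construction and the $D_r$-augmented peeling, mirrored inward, rather than by a circuit conditioning that does not deliver the independence you need.
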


The proofs of Lemmas~\ref{lKesten} and \ref{lKesten1} are similar, and we only give the proof of Lemma~\ref{lKesten} here.
Moreover, parts of the proof of Lemma~\ref{lKesten} are similar to the proof of Lemma~4 in \cite{kesten}.
We will refer the reader to \cite{kesten} for the proof of those parts.
Before we give the proof of Lemma~\ref{lKesten}, we show how to deduce Lemma~\ref{l23} from the above two lemmas.

Using Lemma~\ref{lKesten}, Lemma~\ref{lKesten1} and ``gluing'' arguments (see \cite{kesten,Nolin}), we prove
\begin{lemma}
For two circuits, ${\mathcal C}_1$ in annulus $A_i$ and
${\mathcal D}_1$ in annulus $A_{i+2}$, sets of edges $e_1,\ldots,e_m$ on ${\mathcal C}_1$ and $f_1,\ldots,f_m$
on ${\mathcal D}_1$, if $M({\mathcal C}_1,{\mathcal D}_1)$ is the probability,
conditioned on the event that all edges in $\mathcal C_1\setminus\{e_1,\ldots,e_m\}$ and in $\mathcal D_1\setminus\{f_1,\ldots,f_m\}$ are open
and $e_1,\ldots,e_m$, $f_1,\ldots,f_m$ are closed, that there
are disjoint closed dual paths from $e_i^*$ to $f_i^*$ for all $i$, and there are
$m$ disjoint open paths from $\mathcal C_1$ to $\mathcal D_1$
in $\overline{\mbox{int}({\mathcal D}_1)}\setminus \mbox{int}({\mathcal C}_1)$, which alternate with the closed paths defined above
(and similar definitions for ${\mathcal C}_2$ and ${\mathcal D}_2$), then
\[
\frac{M({\mathcal C}_1,{\mathcal D}_1)M({\mathcal C}_2,{\mathcal D}_2)}{M({\mathcal C}_1,{\mathcal D}_2)M({\mathcal C}_2,{\mathcal D}_1)}<C_4,
\]
for some constant $C_4$ that does not depend on the particular choice of circuits or defects.
\end{lemma}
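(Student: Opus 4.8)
The plan is to decouple the dependence of $M(\mathcal C,\mathcal D)$ on the inner circuit $\mathcal C$ from its dependence on the outer circuit $\mathcal D$ by cutting the region $\overline{\mbox{int}(\mathcal D)}\setminus\mbox{int}(\mathcal C)$ along a dyadic circle $\partial B(2^n)$ lying well inside the annulus $A_{i+1}$, and to prove the two-sided estimate
\[
M(\mathcal C,\mathcal D)\ \asymp\ {\mathbb P}(E_{\mathrm{in}}(\mathcal C))\,{\mathbb P}(E_{\mathrm{out}}(\mathcal D))
\]
with multiplicative constants depending only on $m$. Here $E_{\mathrm{in}}(\mathcal C)$ is the ``inner half'' of the arm event (i.e.\ $m$ disjoint closed dual paths from the $e_j^*$ to $\partial B(2^n)^*$ and $m$ disjoint open paths from $\mathcal C$ to $\partial B(2^n)$, all alternating and lying between $\mathcal C$ and $\partial B(2^n)$, together with the prescribed configuration on $\mathcal C$) and $E_{\mathrm{out}}(\mathcal D)$ is the corresponding ``outer half'' between $\partial B(2^n)$ and $\mathcal D$; these are exactly the events $E(\mathcal C_{e_1,\ldots,e_m})$ and $F(\mathcal D_{f_1,\ldots,f_m})$ of Lemmas~\ref{lKesten} and \ref{lKesten1} with free scale $2^n$. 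The point is that the same cutting circle serves all four pairs $(\mathcal C_1,\mathcal D_1)$, $(\mathcal C_2,\mathcal D_2)$, $(\mathcal C_1,\mathcal D_2)$, $(\mathcal C_2,\mathcal D_1)$, so in the ratio in the statement the four factors ${\mathbb P}(E_{\mathrm{in}}(\mathcal C_1))$, ${\mathbb P}(E_{\mathrm{in}}(\mathcal C_2))$, ${\mathbb P}(E_{\mathrm{out}}(\mathcal D_1))$, ${\mathbb P}(E_{\mathrm{out}}(\mathcal D_2))$ --- as well as the normalisations coming from conditioning on the circuit configurations --- cancel identically, and the ratio is bounded by the square of the ratio of the two $m$-dependent constants, which is the desired $C_4$.

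For the upper bound I would use that on the event defining $M(\mathcal C,\mathcal D)$ each of the $2m$ alternating paths runs from $\mathcal C$ (which lies inside $B(2^n)$ since $\mathcal C\subset A_i$) to $\mathcal D$ (which lies outside $B(2^n)$ since $\mathcal D\subset A_{i+2}$), hence crosses $\partial B(2^n)$; keeping the initial piece up to the first visit of $\partial B(2^n)$ realises $E_{\mathrm{in}}(\mathcal C)$ on edges inside $B(2^n)$, and keeping the final piece after the last visit realises $E_{\mathrm{out}}(\mathcal D)$ on edges outside $B(2^n)$, with the alternating cyclic order inherited because disjoint arcs from a circuit to a surrounding circuit cross any intermediate circle in the same cyclic order. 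Since the two halves live on disjoint sets of edges, independence (or, if one prefers, Reimer's inequality, already used in this paper) gives $M(\mathcal C,\mathcal D)\le{\mathbb P}(E_{\mathrm{in}}(\mathcal C)){\mathbb P}(E_{\mathrm{out}}(\mathcal D))$. For the lower bound I would apply the external arm separation Lemma~\ref{lKesten} to $E_{\mathrm{in}}(\mathcal C)$ and the internal arm separation Lemma~\ref{lKesten1} to $E_{\mathrm{out}}(\mathcal D)$ (valid once the $N_j$'s are chosen so that $A_i$ and $A_{i+2}$ leave room for a scale with $n_0\le n-3$ and $n+3\le n_1$): up to factors $C_2$ and $C_3$ these events may be replaced by their well-separated versions, in which the $2m$ arms enter the prescribed fences $\mathcal I_i$ near $\partial B(2^n)$, from the inside and from the outside respectively, in a way that leaves room for gluing. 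Conditionally on these two independent separated events, the standard RSW and FKG ``gluing'' (see \cite{kesten,Nolin}) joins, simultaneously for all $i$, the inner arm $P_i$ to the outer arm $P_i$ by a path of the required colour inside the fence $\mathcal I_i$; the $2m$ connections are then automatically disjoint and correctly alternating because they sit in the disjoint, cyclically ordered fences, and this succeeds with probability at least a constant $c_m>0$ depending only on $m$. Concatenating inner arm, gluing path, and outer arm for each colour produces a configuration realising the event defining $M(\mathcal C,\mathcal D)$, so $M(\mathcal C,\mathcal D)\ge c_m C_2^{-1}C_3^{-1}\,{\mathbb P}(E_{\mathrm{in}}(\mathcal C)){\mathbb P}(E_{\mathrm{out}}(\mathcal D))$, whence the ratio in the statement is at most $(C_2C_3/c_m)^2$.

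The substantive content of this argument sits in the two arm separation lemmas (and, behind them, in the extensions of Kesten's Lemmas~4 and 5); granting those, the one place that needs genuine care is the gluing: one has to arrange that the well-separated inner and outer arm events and the gluing region occupy essentially disjoint sets of edges, and that the $2m$ colour-prescribed connections near $\partial B(2^n)$ can be made disjoint and in the correct alternating order with a probability bounded below \emph{uniformly} in $n$ and in the circuits --- which is precisely what the ``defect-free box'' structure built into the well-separated events $\widetilde E$, $\widetilde F$ is meant to provide. A minor additional bookkeeping point is that the sequence $(N_j)$ in the construction must be taken to grow fast enough for a single dyadic scale $2^n$ to sit comfortably between $A_i$ and $A_{i+2}$, a requirement compatible with $\alpha_i\to 1$.
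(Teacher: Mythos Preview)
Your proposal is correct and is essentially the same argument the paper has in mind: the paper's proof is simply a pointer to Kesten's proof of (2.43) in \cite{kesten}, using exactly the ingredients you invoke---the external and internal arm separation Lemmas~\ref{lKesten} and~\ref{lKesten1} to pass to well-separated arms near an intermediate dyadic scale, and RSW together with the generalized FKG inequality to glue---and your write-up is precisely that factorisation $M(\mathcal C,\mathcal D)\asymp{\mathbb P}(E_{\mathrm{in}}(\mathcal C)){\mathbb P}(E_{\mathrm{out}}(\mathcal D))$ spelled out. Your bookkeeping remark that the $N_j$'s must grow fast enough to leave room for a single dyadic scale $2^n$ in $A_{i+1}$ with $n_0\le n-3$ and $n+3\le n_1$ is also right and compatible with the paper's setup.
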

\begin{proof}
This lemma follows from Lemma~\ref{lKesten}, Lemma~\ref{lKesten1}, the RSW theorem (Section~11.7 in \cite{Grimmett}), and the generalized FKG inequality (Lemma~3, \cite{kesten}).
For more details we refer the reader to the proof of (2.43) in \cite{kesten}.
\end{proof}
Lemma~\ref{l23} immediately follows from the above lemma.
\begin{proof}[Proof of Lemma~\ref{l23}]
Consider circuits $\mathcal C_1$ in annulus $A_{i+2}$, $\mathcal D_1$ in $A_{i+3}$, sets of edges $g_1,\ldots,g_m$ on $\mathcal C_1$ and $h_1,\ldots,h_m$ on $\mathcal D_1$ respectively. Let $H(\mathcal C_1,\mathcal D_1)$ be the probability of the event that (1) $\mathcal C_1$ is the outermost open circuit with defects $g_1,\ldots,g_m$ in annulus $A_{i+2}$, (2) $\mathcal D_1$ is the innermost open circuit with defects $h_1,\ldots,h_m$ in annulus $A_{i+3}$, (3) there are disjoint closed dual paths from $g_i^*$ to $h_i^*$, and (4) there are $m$ disjoint open paths from $\mathcal C_1$ to $\mathcal D_1$ in $\overline{int(\mathcal D_1)}\setminus int(\mathcal C_1)$,
which alternate with the closed paths defined above.

We write
\[
P({\mathcal C},{\mathcal D})P({\mathcal C}',{\mathcal D}')
=
\sum_{\mathcal C_1}M(\mathcal C,\mathcal C_1) H(\mathcal C_1,\mathcal D) \sum_{\mathcal C_1'}M(\mathcal C',\mathcal C_1')H(\mathcal C_1',\mathcal D').
\]
We then apply the previous lemma to $\mathcal C$, $\mathcal C_1$, $\mathcal C'$ and $\mathcal C_1'$:
\[
P({\mathcal C},{\mathcal D})P({\mathcal C}',{\mathcal D}')
\leq
C_4 \sum_{\mathcal C_1}M(\mathcal C',\mathcal C_1) H(\mathcal C_1,\mathcal D) \sum_{\mathcal C_1'}M(\mathcal C,\mathcal C_1')H(\mathcal C_1',\mathcal D').
\]
\end{proof}

\begin{proof}[Proof of Lemma~\ref{lKesten}]
We only consider the case $m=2$. The case $m=1$ is simpler, and the general case is similar to the case $m=2$.

Let $\mathcal C$ be a circuit in $B(2^{n_0})$. All edges in $\mathcal C$ are open except for two edges $e$ and $f$, which are closed.
Let $P_1$ and $P_3$ be disjoint closed dual paths from $e^*$ and $f^*$ to $\partial B(2^{n})^*$, respectively, and
let $P_2$ and $P_4$ be disjoint open paths from $\mathcal C$ to $\partial B(2^{n})$ that satisfy conditions of the lemma.

We define $\gamma_1^l$ as the leftmost closed dual path from $e^*$ to $\partial B(2^{n_0}+1/2)$ in $B(2^{n_0}+1/2)\setminus \mbox{int}(\mathcal C)$, and
$\gamma_1^r$ as the rightmost closed dual path from $e^*$ to $\partial B(2^{n_0}+1/2)$ in $B(2^{n_0}+1/2)\setminus \mbox{int}(\mathcal C)$.
We denote the first vertex on $\partial B(2^{n_0})$ to the left of $\gamma_1^l$ as $a_1$, and the first vertex on $\partial B(2^{n_0})$ to the right of $\gamma_1^r$ as $a_2$.
Let $\gamma_2^l$ be the leftmost open path from the right end-vertex of $e$
(using the clockwise ordering of vertices end edges on $\mathcal C$) to $a_2$. This path is necessarily contained in $B(2^{n_0})\setminus \mbox{int}(\mathcal C)$. Let $\gamma_4^r$ be the rightmost open path from the left end-vertex of $e$ (using the clockwise ordering of vertices end edges on $\mathcal C$) to $a_1$ in $B(2^{n_0})\setminus \mbox{int}(\mathcal C)$.
Similarly we define $\gamma_3^l$, $\gamma_3^r$, $a_3$, $a_4$, $\gamma_4^l$ and $\gamma_2^r$ (see Figure~\ref{fEventGammas}).

For $i\in \{1,2,3,4\}$, let $T_i$ be the piece of $\partial B(2^{n_0})$ between (and including) $a_i$ and $a_{i+1}$ that does not contain $a_{i+2}$ or $a_{i+3}$, where
we use the convention $a_i = a_{i-4}$ for $i>4$. Note that it is possible that $a_2 = a_3$ (in which case $T_2 = \{a_2\}$) or $a_4 = a_1$ (in which case $T_4 = \{a_4\}$);
however, we necessarily have $a_1 \neq a_2$ and $a_3 \neq a_4$. 

Let $\gamma_i$ be the part of $\gamma_i^l\cup\gamma_i^r\cup\mathcal C$ that consists of the piece of $\gamma_i^l$ from the last intersection with $\gamma_i^r\cup\mathcal C$, the piece of $\gamma_i^r$ from the last intersection with $\gamma_i^l\cup\mathcal C$, and the piece of $\mathcal C$ that connects the first two pieces (if the pieces are disconnected). Note that it is possible that $\gamma_2$ or $\gamma_4$ is a single point set on $\partial B(2^{n_0})$, which happens if $a_2 = a_3$ or $a_4 = a_1$, respectively. Let $R_i$ denote the connected subset of $\R^2$ with the boundary that consists of $T_i$ and $\gamma_i$ (see Figure~\ref{fEventGammas}). Note that these sets are disjoint. Moreover, if $\gamma_2$ or $\gamma_4$ is a single point set ($\{a_2\}$ or $\{a_4\}$ respectively), then $R_2$ or $R_4$ is the same single point set.

\begin{figure}[h]
\begin{center}
\includegraphics[width=12cm]{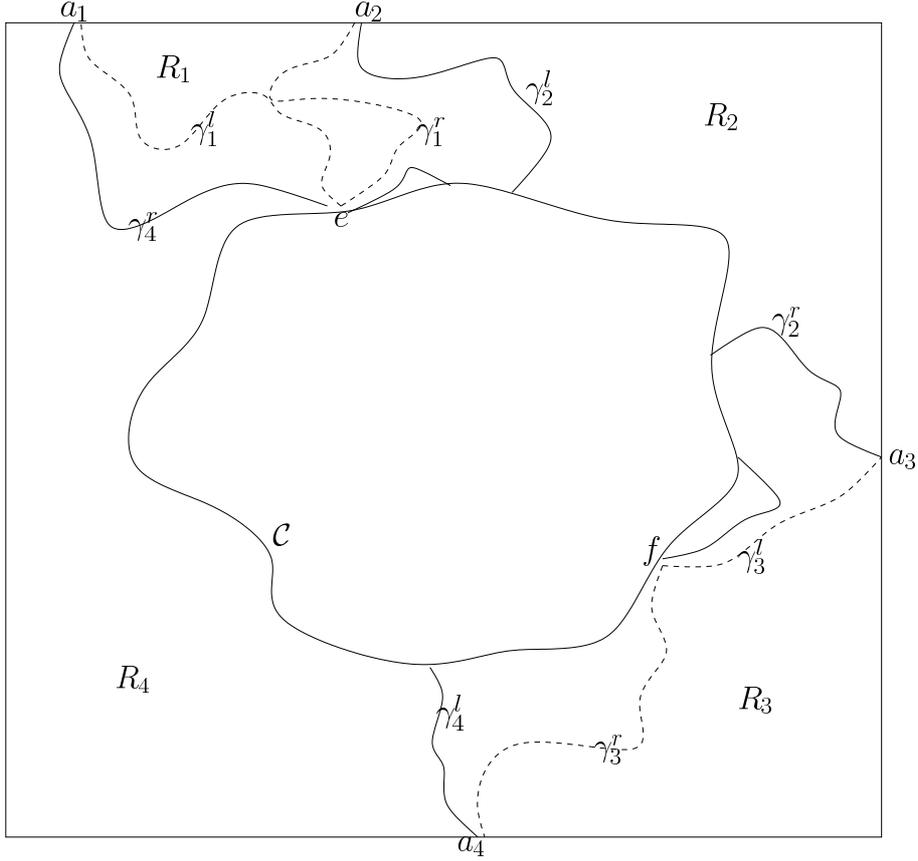}
\caption{The event $E(\mathcal C_{e_1,\ldots,e_m})$ occurs if and only if $\gamma_1$ and $\gamma_3$ are connected by closed paths to $\partial B(2^{n})^*$,
and $\gamma_2$ and $\gamma_4$ are connected by open paths to $\partial B(2^{n})$ in $B(2^{n_0})^c\cup R_1\cup R_2\cup R_3\cup R_4$.}
\label{fEventGammas}
\end{center}
\end{figure}

We observe that under the assumptions of Lemma~\ref{lKesten}, the event $E(\mathcal C_{e_1,\ldots,e_m})$ occurs if and only if $\gamma_1$ and $\gamma_3$ are connected by closed dual paths to $\partial B(2^{n})^*$
in
\[
S := B(2^{n_0})^c\cup R_1\cup R_2\cup R_3\cup R_4,
\]
and $\gamma_2$ and $\gamma_4$ are connected by open paths to $\partial B(2^{n})$ in $S$. We also observe that once $\gamma_1$, $\gamma_2$, $\gamma_3$, and $\gamma_4$ are fixed, the percolation process in
$S$ is still an independent Bernoulli percolation. Therefore, the following statement is equivalent to the statement of the lemma.
Let $E(\gamma_1,\ldots,\gamma_4)$ be the event that
(1) $\gamma_1$ and $\gamma_3$ are connected to $\partial B(2^n)^*$ by closed dual paths $P_1$ and $P_3$ in $S$, and
(2) $\gamma_2$ and $\gamma_4$ are connected to $\partial B(2^n)$ by open paths $P_2$ and $P_4$ in $S$.
Let $\widetilde E(\gamma_1,\ldots,\gamma_4)$ be the event that $E(\gamma_1,\ldots,\gamma_4)$ occurs with paths $P_1,\ldots,P_4$ satisfying the requirement that,
for all $1\leq i\leq 4$, $P_i \cap \overline{Ann(2^{n-1},2^n)} \subset \mathcal I_i(2^{n-1},2^n)$.
Then there exists a constant $C_2$ which does not depend on $n$, $n_0$, or the choice of $\gamma_i$'s, such that
\begin{equation}\label{eq:Egammas}
{\mathbb P}(E(\gamma_1,\ldots,\gamma_4))
\leq C_2
{\mathbb P}(\widetilde E(\gamma_1,\ldots,\gamma_4)).
\end{equation}

To prove the above statement we construct a family of disjoint annuli in four stages. We consider $T_1, \ldots, T_4$ on $\partial B(2^{n_0})$ as defined above.
We define
\[
l_i(1) = \min\{l~:~\exists x\in 2^l\Z^2\cap\partial B(2^{n_0})~\mbox{s.t.}~B(x,2^l)\supset T_i\}
\]
if such $l$ exists (the definition implies that it is no bigger than $n_0$),
and let $B_i(1) = B(x_i(1),2^{l_i(1)})$ be such a box. If there are several choices for the box, we pick the first one in clockwise ordering.
If there are no such $l$, we let $l_i(1) = n_0+1$ and $B_i(1) = B(2^{n_0+1})$ (in this case $x_i(1) = 0$). The boxes $B_1(1),\ldots,B_4(1)$ form a covering of $\partial B(2^{n_0})$ such that $B_i(1) \supset T_i$, and either $x_i(1)\in T_i$ or $x_i(1) = 0$ (in which case $B_i(1) = B(2^{n_0+1})$).

We also define
\[
\widetilde l_i(1) = \min\{l\geq l_i(1)~:~B(x_i(1),2^l)\supset B_j(1)~\mbox{for some}~j\neq i\}.
\]
Let
$Ann_i(1) = Ann(x_i(1); 2^{l_i(1)}, 2^{\widetilde l_i(1)-3}) = B(x_i(1),2^{\widetilde l_i(1)-3})\setminus B_i(1)$, if $\widetilde l_i(1) - 5 > l_i(1)$; otherwise, let $Ann_i(1) = \emptyset$.
Note that if $l_i(1) \geq l_{i-1}(1)$ or $l_i(1) \geq l_{i+1}(1)$, then $Ann_i(1) = \emptyset$. In particular, if $l_i(1) = n_0+1$, then $Ann_i(1) = \emptyset$. If $Ann_i(1) \neq \emptyset$, we let $\widetilde B_i(1) = B(x_i(1), 2^{\widetilde l_i(1)-3})$; otherwise, we let $\widetilde B_i(1) = B_i(1)$.
We write $\widetilde B_i(1) = B(x_i(1), 2^{l_i(1)'})$. We observe that the boxes $\widetilde B_i(1)$ form a covering of $\partial B(2^{n_0})$, and that there exists $i$ such that $|l_{i+1}(1)'-l_i(1)'| \leq 5$,
in other words, $\widetilde B_{i+1}(1)$ and $\widetilde B_i(1)$ are comparable in size.  This completes the first stage of our construction.

We proceed further by defining
\[
l_i(2) = \min\{l~:~\exists x\in 2^l\Z^2\cap\partial B(2^{n_0})~\mbox{s.t.}~B(x,2^l)\supset (\widetilde B_i(1)\cup\widetilde B_{i+1}(1))\}
\]
if such $l$ exists (it is necessarily not bigger than $n_0$),
and let $B_i(2) = B(x_i(2),2^{l_i(2)})$ be such a box (if there are several choices, we pick the first one in clockwise ordering).
If there are no such $l$, we let $l_i(2) = n_0+1$ and $B_i(2) = B(2^{n_0+1})$ (in this case $x_i(2) = 0$).

We also define
\[
\widetilde l_i(2) = \min\{l\geq l_i(2)~:~B(x_i(2),2^l)\supset \widetilde B_j(1)~\mbox{for some}~j\neq i,i+1\}.
\]
Let
$Ann_i(2) = Ann(x_i(2); 2^{l_i(2)}, 2^{\widetilde l_i(2)-3}) = B(x_i(2),2^{\widetilde l_i(2)-3})\setminus B_i(2)$, if $\widetilde l_i(2) - 5 > l_i(2)$; otherwise, let $Ann_i(2) = \emptyset$. If $Ann_i(2) \neq \emptyset$, we define $\widetilde B_i(2) = \widetilde B_{i+1}(2) = B(x_i(2), 2^{\widetilde l_i(2)-3})$.
For all remaining indices $i$ for which $\widetilde B_i(2)$ is not yet defined, we let $\widetilde B_i(2) = \widetilde B_i(1)$. In other words, if we have not succeeded in building a nonempty annulus around $\widetilde B_i(1)$, we take this box unchanged to the next stage of our construction.
We write $\widetilde B_i(2) = B(x_i(2), 2^{l_i(2)'})$. We observe that the boxes $\widetilde B_i(2)$ form a covering of $\partial B(2^{n_0})$, and that there exists $i$ such that $|l_{i+1}(2)'-l_i(2)'| \leq 5$,
$|l_{i+2}(2)'-l_i(2)'| \leq 5$ and $|l_{i+2}(2)'-l_{i+1}(2)'| \leq 5$.
In other words $\widetilde B_i(2)$, $\widetilde B_{i+1}(2)$ and $\widetilde B_{i+2}(2)$ are comparable in size.

In the third stage, for any $i$, we define
\[
l_i(3) = \min\{l~:~\exists x\in 2^l\Z^2\cap\partial B(2^{n_0})~\mbox{s.t.}~B(x,2^l)\supset (\widetilde B_i(2)\cup\widetilde B_{i+1}(2)\cup\widetilde B_{i+2}(2))\}
\]
if such $l$ exists (it is necessarily not bigger than $n_0$),
and let $B_i(3) = B(x_i(3),2^{l_i(3)})$ be such a box (if there are several choices, we pick the first one in clockwise ordering).
If there are no such $l$, we let $l_i(3) = n_0+1$ and $B_i(3) = B(2^{n_0+1})$ (in this case $x_i(3) = 0$).

We also define
\[
\widetilde l_i(3) = n_0+1.
\]

Let
$Ann_i(3) = Ann(x_i(3); 2^{l_i(3)}, 2^{\widetilde l_i(3)-3}) = B(x_i(3),2^{\widetilde l_i(3)-3})\setminus B_i(3)$, if $\widetilde l_i(3) - 5 > l_i(3)$, otherwise let $Ann_i(3) = \emptyset$. If $Ann_i(3) \neq \emptyset$, we define $\widetilde B_i(3) = \widetilde B_{i+1}(3) = \widetilde B_{i+2}(3) = B(x_i(3), 2^{\widetilde l_i(3)-3})$.
For all remaining indices $i$ for which $\widetilde B_i(3)$ is not yet defined, we let $\widetilde B_i(3) = \widetilde B_i(2)$.
In other words, if we have not succeeded in building a nonempty annulus around $\widetilde B_i(2)$, we take this box unchanged to the next stage of our construction.
We write $\widetilde B_i(3) = B(x_i(3), 2^{l_i(3)'})$. We observe that the boxes $\widetilde B_i(3)$ form a covering of $\partial B(2^{n_0})$ such that $|l_{i+1}(3)'-l_i(3)'| \leq 5$ for all $i$.
Moreover, all of these boxes are contained in $B(2^{n_0+1})$.

Finally, we define the annulus $Ann_i(4) = Ann(2^{n_0+1}, 2^{n})$.

Before we continue, we list some properties of the above constructed annuli. Let us call the annuli $(Ann_i(k))_i$ {\it level} $k$ {\it annuli}. First, $(Ann_i(1))_i$, $(Ann_j(2))_j$, $(Ann_k(3))_k$, and $(Ann_l(4))_l$ are {\it disjoint} among levels and between levels. In addition, there are at most two nonempty level $1$ annuli, at most one level $2$, $3$ or $4$ annuli each.
Next, we observe that $x_i(k)\neq 0$ if $k<4$ and $Ann_i(k)\neq \emptyset$. In other words, the only nonempty annulus centered at the origin is the level $4$ annulus. Last, the event $E(\gamma_1,\ldots,\gamma_4)$ implies the existence of crossings of annulus $Ann_i(1)$ by path $P_i$, annulus $Ann_i(2)$ by paths $P_i$ and $P_{i+1}$,
annulus $Ann_i(3)$ by paths $P_i$, $P_{i+1}$ and $P_{i+2}$, and annulus $Ann_i(4)$ by all four paths $P_1,\ldots,P_4$. Some examples of families of annuli are illustrated on Figure~\ref{fAnnuli}.

\begin{figure}
\begin{center}
\includegraphics[width=6cm]{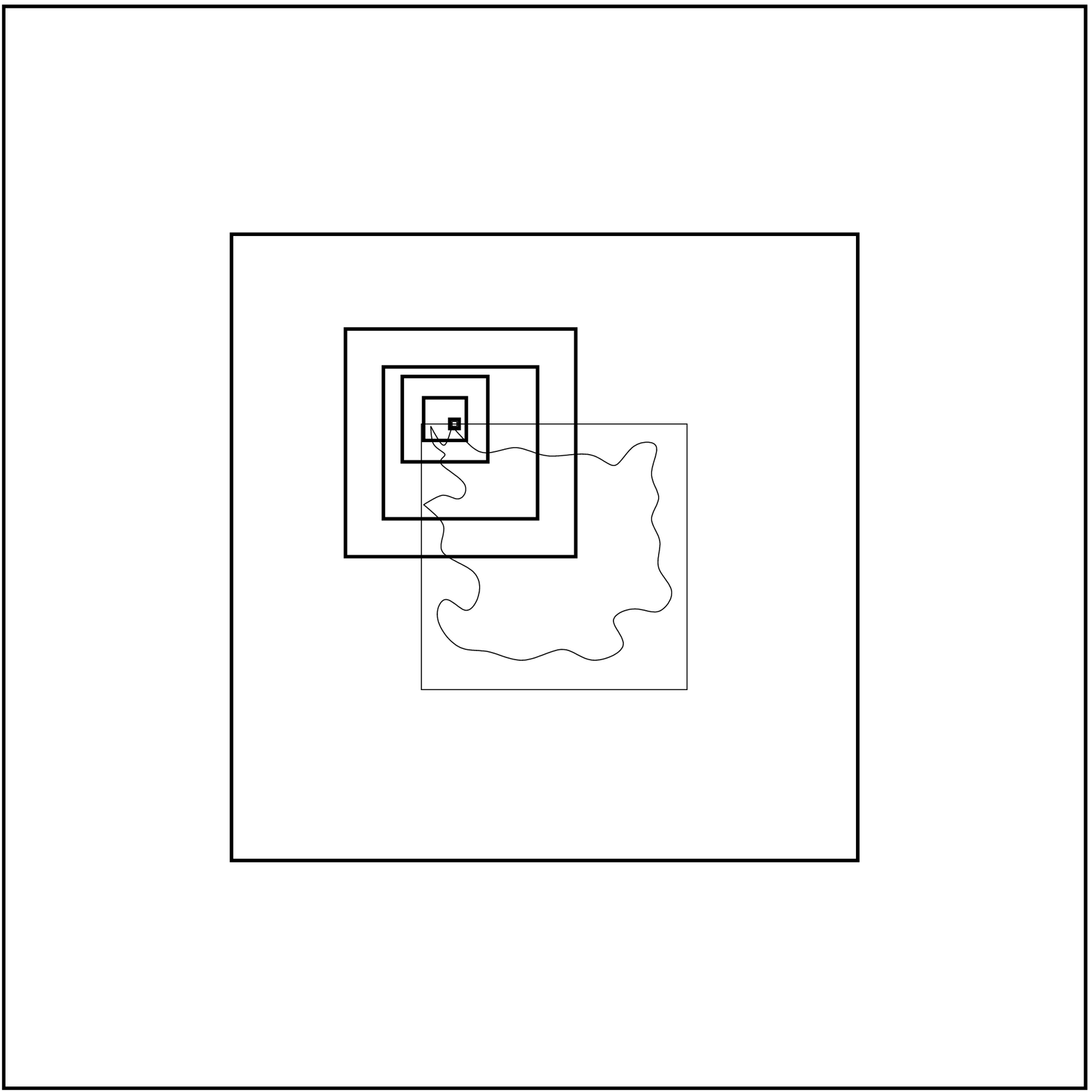}~~~~
\includegraphics[width=6cm]{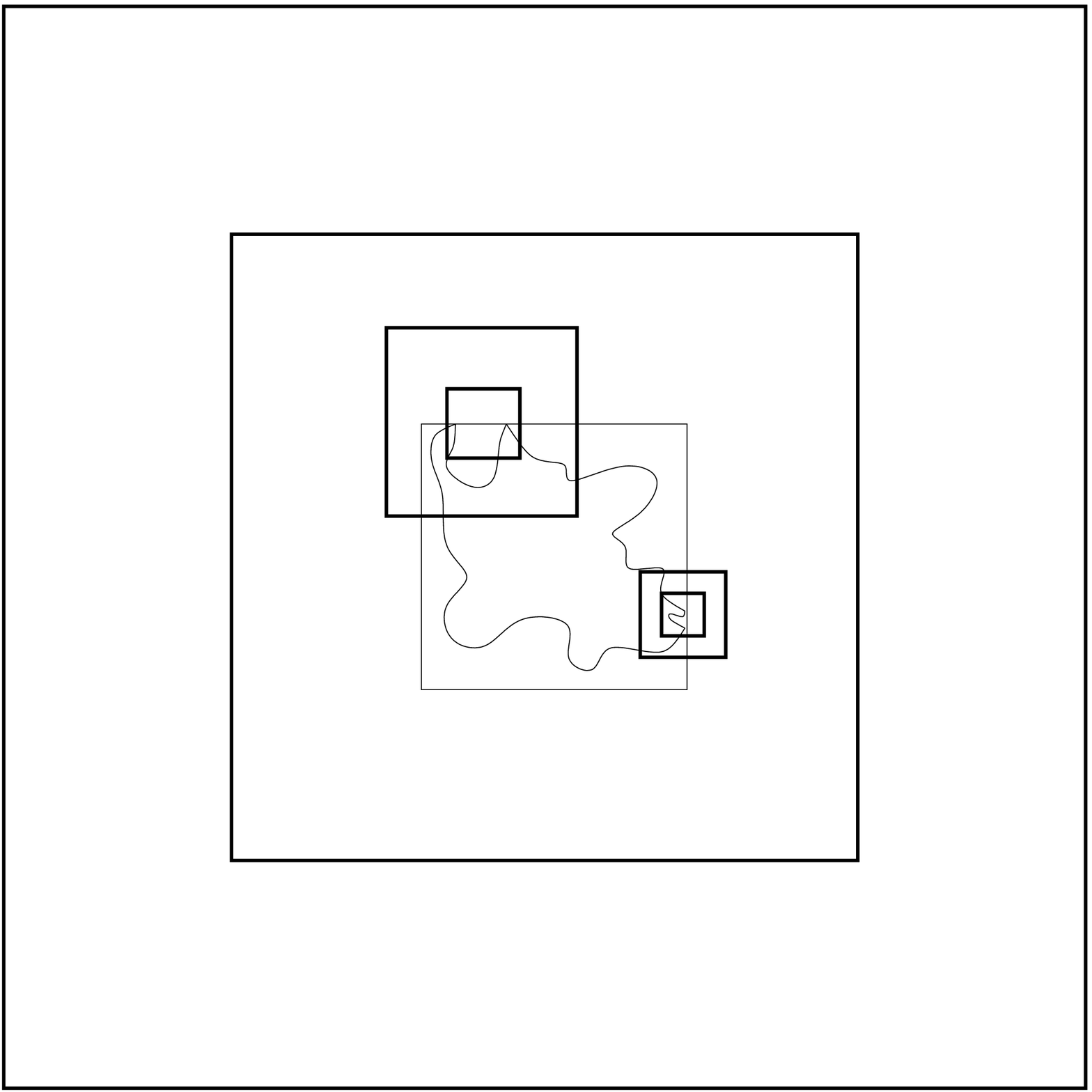}
\caption{The family of annuli in the left figure consists of one annulus of each level.
The family of annuli in the right figure consists of two level $1$ annuli and one level $4$ annulus.
In general, there are at most two non-empty level $1$ annuli, at most one level $2$, $3$ or $4$ annulus each.}
\label{fAnnuli}
\end{center}
\end{figure}

To show \eqref{eq:Egammas}, our strategy is to bound the probability of the event $E(\gamma_1,\ldots,\gamma_4)$ by the product of probabilities of crossing events in such annuli. We will then use Kesten's ideas (Lemmas~4 and 5 in \cite{kesten}) to prove an arm separation statement for each of the crossing events, and this will allow us to use the RSW Theorem (Section~11.7 in \cite{Grimmett}) and the generalized FKG inequality (Lemma~3 in \cite{kesten}) to `glue' those crossing into solid paths from the $\gamma_i$'s to $\partial B(2^{n})$ in such a way that the event $\widetilde E(\gamma_1,\ldots,\gamma_4)$ occurs.

We should be more careful though, since we have to take into account that we consider paths in $S$.
For $k<4$ and for each nonempty annulus $Ann_i(k)$, we define $a_i(k)$ as the first point on $\gamma_{i-1}$ (seen as an oriented path from $a_{i-1}$ to $a_{i}$) that belongs to $Ann_i(k)$, and
$b_i(k)$ as the last point on $\gamma_{i+k}$ (seen as an oriented path from $a_{i+k}$ to $a_{i+k+1}$) that belongs to $Ann_i(k)$.
Note that such points always exist if $Ann_i(k)\neq \emptyset$.
We then define the set $S_i(k)$ as the subset of $S$ with boundary that consists of four pieces (in clockwise order): the piece of $\partial B(x_i(k),2^{\widetilde l_i(k)-3})$ between $a_i(k)$ and $b_i(k)$, the piece of $\gamma_{i+k}$ from $b_i(k)$ to the last intersection of $\gamma_{i+k}$ with
$\partial B(x_i(k),2^{l_i(k)})$, the piece of $\partial B(x_i(k),2^{l_i(k)})$ in $S$, and the piece of $\gamma_{i-1}$ from $a_i(k)$ to the last intersection with $\partial B(x_i(k),2^{l_i(k)})$.
Let $L_i(k)$ be the common piece of the boundary of $S_i(k)$ and $\partial B(x_i(k),2^{\widetilde l_i(k)-3})$ between $a_i(k)$ and $b_i(k)$.

\begin{figure}
\begin{center}
\includegraphics[width=12cm]{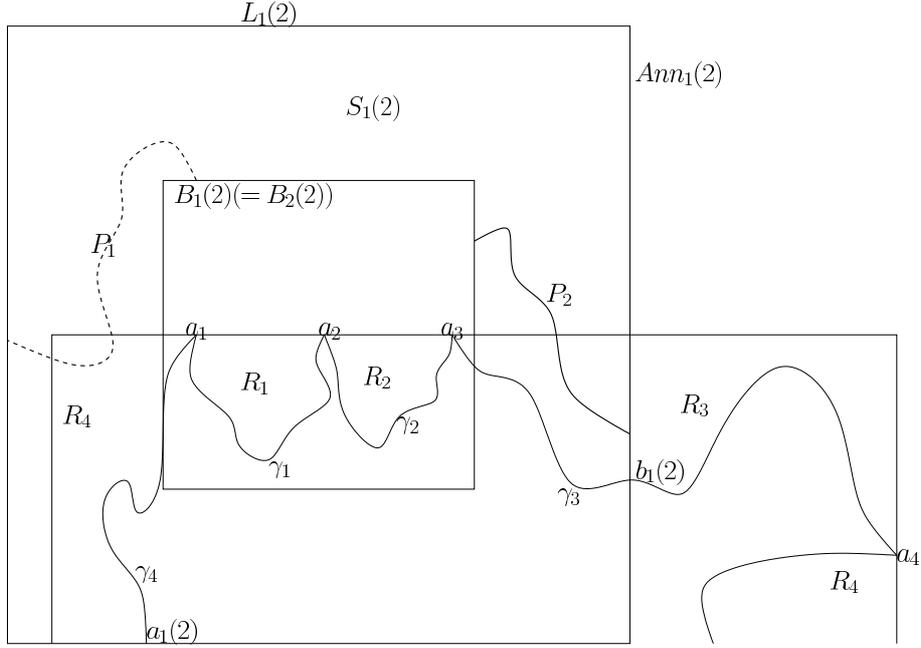}
\caption{The small box is $B(x_1(2),2^{l_1(2)})$ and the big box is $\partial B(x_1(2),2^{\widetilde l_1(2)-3})$. The event $E_1(2)$ occurs if there is a closed dual path $P_1$ and an open path $P_2$ from $B(x_1(2),2^{l_1(2)})$ to $L_1(2)$ in $S_1(2)$.}
\label{fEventSik}
\end{center}
\end{figure}

For $k<4$ and for each nonempty annulus $Ann_i(k)$, let $E_i(k)$ be the event that there exist $k$ disjoint paths from $B(x_i(k),2^{l_i(k)})$ to $L_i(k)$ in $S_i(k)$ such that the order and the status of paths (occupied or vacant) are induced by the order and the status of $P_i,\ldots,P_{i+k-1}$.
For $k<4$ and for each empty annulus $Ann_i(k)$, let $E_i(k)$ be the sure event.
Let $E_i(4)$ be the event that the annulus $Ann_i(4)$ is crossed by two open and two closed dual paths such that the open paths are separated by the closed paths.
Since the sets $S_i(k)$ and $Ann_i(4)$ are disjoint, we obtain
\[
{\mathbb P}(E(\gamma_1,\ldots,\gamma_4))
\leq
\prod_{i,k=1}^4 {\mathbb P}(E_i(k)).
\]
For every nonempty annulus $Ann_i(k)$ we define landing areas $I_i(k) = \{I_i^{(1)}(k),\ldots,I_i^{(k)}(k)\}$ on $Ann(x_i(k); 2^{l_i(k)}, 2^{l_i(k)+1})$ and $\widetilde I_i(k) = \{\widetilde I_i^{(1)}(k),\ldots,\widetilde I_i^{(k)}(k)\}$ on
$Ann(x_i(k); 2^{\widetilde l_i(k) - 4}, 2^{\widetilde l_i(k) - 3})$ as follows. If $x_i(k) = 0$, we let
$I_i^{(1)}(k) = [-2^{n_0+1},2^{n_0+1}]\times[2^{n_0+1},2^{n_0+2}]$,
$I_i^{(2)}(k) = [2^{n_0+1},2^{n_0+2}]\times[-2^{n_0+1},2^{n_0+1}]$,
$I_i^{(3)}(k) = [-2^{n_0+1},2^{n_0+1}]\times[-2^{n_0+2},-2^{n_0+1}]$,
$I_i^{(4)}(k) = [-2^{n_0+2},-2^{n_0+1}]\times[-2^{n_0+1},2^{n_0+1}]$ (note that we only need to introduce such landing areas for $k=4$, since otherwise $Ann_i(k) = \emptyset$ when $x_i(k)=0$).
If $x_i(k) \neq 0$, we consider a side of $\partial B(x_i(k), 2^{l_i(k)})$ which is parallel to the side of $\partial B(2^{n_0})$ containing $x_i(k)$ and does not intersect $B(2^{n_0})$. If several sides satisfy these conditions, we pick the first one in clockwise ordering.
We partition this side into four pieces of equal length $2^{l_i(k)-1}$, and we consider four rectangles in $Ann(x_i(k); 2^{l_i(k)}, 2^{l_i(k)+1})$ of size
$2^{l_i(k) - 1} \times 2^{l_i(k)}$ such that each of the above defined pieces of $\partial B(x_i(k), 2^{l_i(k)})$ is a side of one of the rectangles.
We then define $I_i(k)$ to be the set of first $k$ (in clockwise ordering) of these rectangles.

If $x_i(k) = 0$, we let
$\widetilde I_i^{(1)}(k) = [-2^{\widetilde l_i(k) - 4},2^{\widetilde l_i(k) - 4}]\times[2^{\widetilde l_i(k) - 4},2^{\widetilde l_i(k) - 3}]$,
$\widetilde I_i^{(2)}(k) = [2^{\widetilde l_i(k) - 4},2^{\widetilde l_i(k) - 3}]\times[-2^{\widetilde l_i(k) - 4},2^{\widetilde l_i(k) - 4}]$,
$\widetilde I_i^{(3)}(k) = [-2^{\widetilde l_i(k) - 4},2^{\widetilde l_i(k) - 4}]\times[-2^{\widetilde l_i(k) - 3},-2^{\widetilde l_i(k) - 4}]$,
$\widetilde I_i^{(4)}(k) = [-2^{\widetilde l_i(k) - 3},-2^{\widetilde l_i(k) - 4}]\times[-2^{\widetilde l_i(k) - 4},2^{\widetilde l_i(k) - 4}]$ (again, the only useful case for us here is $k=4$, since otherwise $Ann_i(k)=\emptyset$).
If $x_i(k) \neq 0$, we consider a side of $\partial B(x_i(k), 2^{\widetilde l_i(k) - 4})$ which is parallel to the side of $\partial B(2^{n_0})$ containing $x_i(k)$ and does not intersect $B(2^{n_0})$. If several sides satisfy these conditions, we pick the first one in clockwise ordering.
We partition this side into four pieces of equal length $2^{\widetilde l_i(k) - 5}$, and we consider four rectangles in
$Ann(x_i(k); 2^{\widetilde l_i(k) - 4}, 2^{\widetilde l_i(k) - 3})$ of size
$2^{\widetilde l_i(k) - 5} \times 2^{\widetilde l_i(k) - 4}$ such that each of the above defined pieces of $\partial B(x_i(k), 2^{\widetilde l_i(k) - 4})$ is a side of one of the rectangles.
We then define $\widetilde I_i(k)$ to be the set of first $k$ (in clockwise ordering) of these rectangles.

Let $\widetilde E_i(k)$ be the event that $E_i(k)$ occurs, the intersection of $j$th path with $Ann(x_i(k); 2^{l_i(k)}, 2^{l_i(k)+1})$ is contained in $I_i^{(j)}(k)$, and the intersection of $j$th path with $Ann(x_i(k); 2^{\widetilde l_i(k) - 4}, 2^{\widetilde l_i(k) - 3})$ is contained in $\widetilde I_i^{(j)}(k)$.
We show that there exists a constant $C_5$ such that for any choice of $\gamma_i$'s,
\begin{equation}\label{eqArmSeparationAnnuli}
{\mathbb P}(E_i(k))\leq C_5 {\mathbb P}(\widetilde E_i(k)).
\end{equation}
\begin{proposition}
The above statement implies Lemma~\ref{lKesten}.
\end{proposition}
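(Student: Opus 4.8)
The plan is to deduce the reduced form \eqref{eq:Egammas} of Lemma~\ref{lKesten} from \eqref{eqArmSeparationAnnuli} by combining the per-annulus estimates with a bounded number of RSW gluings. Recall from the construction that the regions $S_i(k)$ (for $k<4$) and $Ann_i(4)$ are pairwise disjoint, that there are at most two nonempty level $1$ annuli and at most one nonempty annulus of each of the levels $2,3,4$, and that $E(\gamma_1,\ldots,\gamma_4)$ forces the crossing events $E_i(k)$ in the corresponding regions, whence ${\mathbb P}(E(\gamma_1,\ldots,\gamma_4))\le\prod_{i,k}{\mathbb P}(E_i(k))$ (as already recorded in the excerpt). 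Applying \eqref{eqArmSeparationAnnuli} to each of the at most five nonempty annuli gives ${\mathbb P}(E(\gamma_1,\ldots,\gamma_4))\le C_5^{5}\prod_{i,k}{\mathbb P}(\widetilde E_i(k))$, and since each $\widetilde E_i(k)$ is determined by the configuration in $S_i(k)$ (or $Ann_i(4)$) and these regions are disjoint, the events are independent, so the right-hand side equals $C_5^{5}\,{\mathbb P}\big(\bigcap_{i,k}\widetilde E_i(k)\big)$. It therefore suffices to find a universal constant $C_6$ with ${\mathbb P}\big(\bigcap_{i,k}\widetilde E_i(k)\big)\le C_6\,{\mathbb P}(\widetilde E(\gamma_1,\ldots,\gamma_4))$.

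To obtain the last bound I would glue the crossings furnished by $\bigcap_{i,k}\widetilde E_i(k)$ into the four separated arms required by $\widetilde E(\gamma_1,\ldots,\gamma_4)$. On $\bigcap_{i,k}\widetilde E_i(k)$ each nonempty annulus is crossed by its prescribed disjoint paths, landing in the designated areas $I_i(k)$ and $\widetilde I_i(k)$; by construction these landing areas were placed so that the outer landing area of the annulus surrounding a box $\widetilde B_i(k)$ sits in the region where the next-stage annulus around $\widetilde B_i(k)$ (together with its neighbours) is built, and the consecutive boxes and annuli entering each stage are comparable in size -- this is precisely the property $|l_{i+1}(k)'-l_i(k)'|\le 5$ recorded during the construction. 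Hence between each annulus and the next annulus surrounding it, between the innermost annuli and the curves $\gamma_1,\ldots,\gamma_4$, and between $Ann_i(4)$ and the sectors $\mathcal I_i(2^{n-1},2^n)$ of $\partial B(2^n)$, there is a corridor: a region of bounded aspect ratio, disjoint from all the $S_i(k)$ and $Ann_i(4)$ and from the other corridors, and, near the origin, contained in $S=B(2^{n_0})^c\cup R_1\cup R_2\cup R_3\cup R_4$, inside which one must realize a bounded number of crossings of prescribed colours joining the appropriate landing areas while keeping the open paths separated from the closed dual paths. By the RSW theorem (Section~11.7 in \cite{Grimmett}) and the generalized FKG inequality (Lemma~3 in \cite{kesten}), applied to the increasing open-connection events and the decreasing closed-dual-connection events separately, each corridor event has probability bounded below by a positive universal constant, and the number of corridors depends only on $m$. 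Since the corridors are disjoint from the regions carrying $\bigcap_{i,k}\widetilde E_i(k)$, the conditional probability that all the corridor crossings occur, given $\bigcap_{i,k}\widetilde E_i(k)$, is bounded below by a universal $c_0>0$; on the intersection of $\bigcap_{i,k}\widetilde E_i(k)$ with all the corridor events, the crossings join into four disjoint alternating paths $P_1,\ldots,P_4$ from the $\gamma_i$'s to $\partial B(2^n)$ whose traces in $\overline{Ann(2^{n-1},2^n)}$ lie in the respective $\mathcal I_i(2^{n-1},2^n)$, that is, $\widetilde E(\gamma_1,\ldots,\gamma_4)$ occurs. This yields ${\mathbb P}\big(\bigcap_{i,k}\widetilde E_i(k)\big)\le c_0^{-1}{\mathbb P}(\widetilde E(\gamma_1,\ldots,\gamma_4))$, which combined with the first paragraph gives \eqref{eq:Egammas}, and hence Lemma~\ref{lKesten}.

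The main obstacle I anticipate is bookkeeping rather than any new idea: one must verify that every corridor used in the gluing is genuinely disjoint from the annuli and from the other corridors, so that the independence and FKG steps apply, and that near $B(2^{n_0})$ the corridors fit inside the constrained region $S$ -- this is exactly where the comparability of the boxes $\widetilde B_i(k)$ produced at each stage and the placement of the landing areas $I_i(k)$, $\widetilde I_i(k)$ relative to the curves $\gamma_i$ are used. Maintaining the alternating colour pattern through every gluing step, and arranging that the final landing sectors coincide with the fixed partition $\mathcal I$, also takes care, but both are handled exactly as in the arm-separation arguments of \cite{kesten} and \cite{Nolin}.
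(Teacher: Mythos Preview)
Your proposal is correct and follows essentially the same approach as the paper: both use the already-recorded product bound, apply \eqref{eqArmSeparationAnnuli} to each of the boundedly many nonempty annuli, and then glue the separated crossings into the full event $\widetilde E(\gamma_1,\ldots,\gamma_4)$ via RSW and the generalized FKG inequality through disjoint corridors in the complement of $\cup_{i,k}Ann_i(k)$. The paper phrases the gluing as constructing disjoint regions $Q_1,\ldots,Q_4$ and bounding the probability of the joint event from below by $C_6\prod_{i,k}{\mathbb P}(\widetilde E_i(k))$, whereas you first convert the product into an intersection via independence of the $\widetilde E_i(k)$ and then condition; the two formulations are equivalent.
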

\begin{proof}
The proof of this proposition is similar to the proof of (2.43) in \cite{kesten}. It is based on the RSW theorem (Section~11.7 in \cite{Grimmett}) and the generalized FKG inequality (Lemma~3, \cite{kesten}). From the construction of $(Ann_i(k))_{i,k}$ it follows that one can define disjoint (wide enough) regions $Q_1,\ldots,Q_4$ in the complement of $\cup_{i,k}Ann_i(k)$ such that the probability of the event that\\
(1) $\widetilde E_i(k)$ occur for all $i$ and $k$, and\\
(2) the $j$th crossings of $Ann_i(k)$ are connected through $Q_j$ into a single path from $\gamma_j$ to $\partial B(2^n)$\\
is bounded from below by $C_6 \prod_{i,k=1}^4 {\mathbb P}(\widetilde E_i(k))$, with a constant $C_6$ which does not depend on $n_0$, $n$, $\gamma_i$'s.
\end{proof}
We now prove inequality \eqref{eqArmSeparationAnnuli}. We observe that the case $k=4$ follows from Lemmas~4 and 5 in \cite{kesten}. We also note that the case $k=1$ follows from the RSW theorem and the FKG inequality. Therefore it is sufficient to consider cases $k=2$ and $k=3$. We only consider here the case $k=2$. The proof of the other case is similar.

If $Ann_i(2) = \emptyset$, then there is nothing to prove, so we assume that $Ann_i(2) \neq \emptyset$. Recall that in this case
$Ann_i(2) = Ann(x_i(2); 2^{l_i(2)}, 2^{\widetilde l_i(2)-3})$ with $\widetilde l_i(2)-5> l_i(2)$.
We consider the event $E_i(k)$ that there exist paths $\bar P_i$ and $\bar P_{i+1}$ from $B(x_i(k),2^{l_i(k)})$ to $L_i(k)$ in $S_i(k)$. Without loss of generality we can assume that $\bar P_i$ is open and $\bar P_{i+1}$ is closed.

\begin{figure}
\begin{center}
\includegraphics[width=12cm]{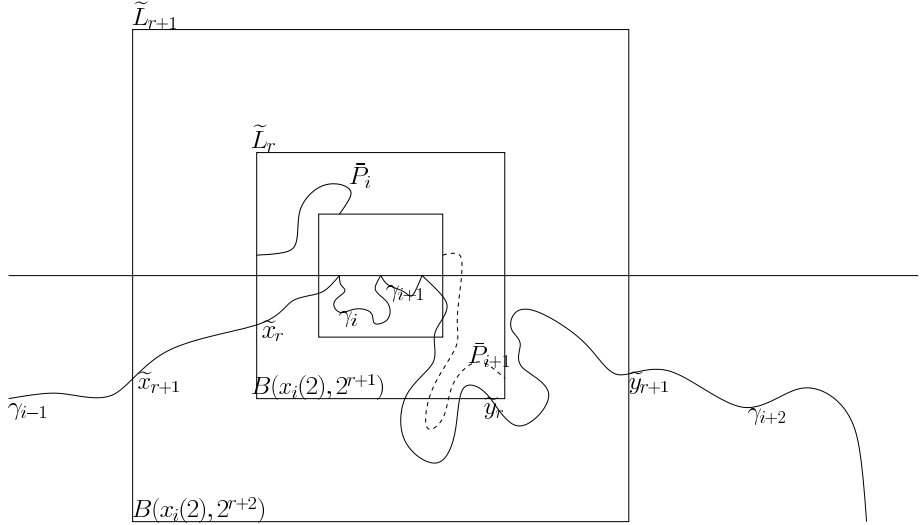}
\caption{It is not enough to have well-separated paths in $\widetilde S_r$. There should be enough space in $\widetilde S_{r+1}\setminus\widetilde S_r$ to extend those paths. In this figure, path $\bar P_i$ can be easily extended from $\widetilde L_r$ to $\widetilde L_{r+1}$ in $\widetilde S_{r+1}\setminus\widetilde S_r$, but not the path $\bar P_{i+1}$. To overcome this difficulty we introduce events $D_r$.}
\label{fEventTrap}
\end{center}
\end{figure}

Fix $r\in [l_i(2), \widetilde l_i(2) - 5)$. We define $\widetilde x_r$ as the first point on $\gamma_{i-1}$ (seen as an oriented path from $a_{i-1}$ to $a_i$) which is contained in $\partial B(x_i(2),2^{r+1})$, and $\widetilde y_r$ as the last point on $\gamma_{i+2}$ (seen as an oriented path from $a_{i+2}$ to $a_{i+3}$) that is contained in $\partial B(x_i(2),2^{r+1})$.
Note that such points always exist.
We define the set $\widetilde S_r$ as the subset of $S$ with boundary that consists of four pieces (in clockwise order): the piece of $\partial B(x_i(2),2^{r+1})$ between $\widetilde x_r$ and $\widetilde y_r$, the piece of $\gamma_{i+2}$ from $\widetilde y_r$ to the last intersection of $\gamma_{i+2}$ with
$\partial B(x_i(2),2^{l_i(2)})$, the piece of $\partial B(x_i(2),2^{l_i(2)})$ in $S$, and the piece of $\gamma_{i-1}$ from $\widetilde x_r$ to the last intersection with $\partial B(x_i(2),2^{l_i(2)})$.
Let $\widetilde L_r$ be the common piece of the boundary of $\widetilde S_r$ and $\partial B(x_i(2),2^{r+1})$ between $\widetilde x_r$ and $\widetilde y_r$.

Consider the event $E_r$ that there exists an open path and a closed dual path from $B(x_i(2),2^{l_i(2)})$ to $\widetilde L_{r+2}$ in $\widetilde S_{r+2}$ (their order is induced by the order of $\bar P_i$ and $\bar P_{i+1}$). We define the landing areas $\widetilde I_r^{(j)}$ on $Ann(x_i(2); 2^{r},2^{r+1})$ in the same way that we defined
the landing areas $I_i(k) = \{I_i^{(1)}(k),\ldots,I_i^{(k)}(k)\}$ on $Ann(x_i(k); 2^{l_i(k)}, 2^{l_i(k)+1})$. Let $\widetilde E_r$ be the event that $E_r$ occurs, the intersection of $j$th path with $Ann(x_i(2); 2^{r+2},2^{r+3})$ is contained in $\widetilde I_{r+2}^{(j)}$.

\begin{figure}
\begin{center}
\includegraphics[width=12cm]{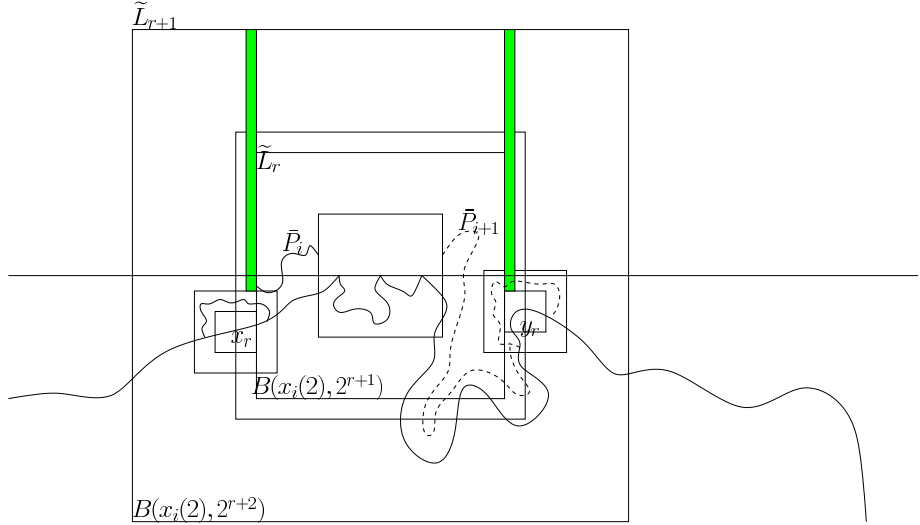}
\caption{If events $E_r$ and $D_r$ occur, the paths $\bar P_i$ and $\bar P_{i+1}$ can be extended from $\widetilde L_r$ to $\widetilde L_{r+1}$ through wide enough (green) corridors.}
\label{fEventDr}
\end{center}
\end{figure}

In essentially the same way as in the proof of Lemma~4 in \cite{kesten} (except that we need to consider events $D_r$, defined below), we
show that there exists a constant $C_7$ that does not depend on $r$ and $\gamma_i$'s such that
\begin{equation}\label{eq:peeling1}
{\mathbb P}(E_r) \leq \delta {\mathbb P}(E_{r-3}) + C_7 {\mathbb P}(\widetilde E_r).
\end{equation}
To prove this inequality, we first make some definitions. Let $\eta$ be a small positive number. We define $x_r$ to be the point on $\gamma_{i-1}\cap Ann(x_i(2); 2^{r+1},2^{r+1}+\eta 2^r)$ such that if we rotate the set
$\gamma_{i-1}\cap Ann(x_i(2); 2^{r+1},2^{r+1}+\eta 2^r)$ about $x_i(2)$ clockwise then the first point of this set which comes into contact with $\partial B(2^{n_0})$ is $x_r$.
If there are several such points, we choose one arbitrarily.
Similarly, let $y_r$ be the point on $\gamma_{i+2}\cap Ann(x_i(2); 2^{r+1},2^{r+1}+\eta 2^r)$ such that if we rotate the set
$\gamma_{i+2}\cap Ann(x_i(2); 2^{r+1},2^{r+1}+\eta 2^r)$ about $x_i(2)$ counterclockwise then the first point of this set which comes into contact with $\partial B(2^{n_0})$ is $y_r$.
If there are several such points, we choose one arbitrarily.
Let $D_r$ be the event that there is an open path in $Ann(x_r;\eta 2^r,\sqrt\eta 2^r)$ separating $x_r$ from $\partial B(2^n)$ in $S$ (one can think of this path as a connected piece in $S\cap Ann(x_r;\eta 2^r,\sqrt\eta 2^r)$ of a complete circuit in $Ann(x_r;\eta 2^r,\sqrt\eta 2^r)$ around $x_r$), and that there is a closed dual path in $Ann(y_r;\eta 2^r,\sqrt\eta 2^r)$ separating $y_r$ from $\partial B(2^n)$ in $S$.
Note that for any $\delta>0$, we can choose $\eta$ such that the probability of $D_r$ is at least $1-\delta/4$.

We now examine the event $D_r$. If $D_r$ occurs, we define $\bar \gamma_r$ as $\gamma_{i-1}$ except that we replace the piece of $\gamma_{i-1}$ between the first and the last intersections of $\gamma_{i-1}$ with the innermost open path in $Ann(x_r;\eta 2^r,\sqrt\eta 2^r)$ (from the definition of $D_r$) with the piece of this innermost path between the intersection points.
Similarly we define $\bar \gamma_r'$ as $\gamma_{i+2}$ except that we replace the piece of $\gamma_{i+2}$ between the first and the last intersections of $\gamma_{i+2}$ with the innermost closed path in $Ann(y_r;\eta 2^r,\sqrt\eta 2^r)$ (from the definition of $D_r$) with the piece of this innermost path between the intersection points.
We define the set $\overline {S_r}$ in the same way as $\widetilde S_r$ except that we use $\bar \gamma_r$ and $\bar \gamma_r'$ instead of $\gamma_{i-1}$ and $\gamma_{i+2}$. In particular, in Figure~\ref{fEventDr}, the region $\widetilde S_r$ does not contain the ``bubble'' formed by $\gamma_{i+2}$ near the bottom right corner of $\partial B(x_i(2),2^{r+1})$, but $\overline {S_r}$ does.

To show \eqref{eq:peeling1}, we consider three different cases: (1) $E_{r-3}$ occurs and $D_{r}$ does not occur;
(2) events $E_{r-3}$ and $D_{r}$ occur, but the open and closed paths (from the definition of $E_r$) from $B(2^{r})$ to $\widetilde L_{r}$ in the region $\overline {S_{r}}$ are not well-separated on $\widetilde L_{r}$ (see the proof of Lemma~4 in \cite{kesten} for the precise definition of well-separated paths);
(3) events $E_{r-3}$ and $D_{r}$ occur, and the open and closed paths (from the definition of $E_r$) from $B(2^{r})$ to $\widetilde L_{r}$ in the region $\overline {S_{r}}$ are well-separated on $\widetilde L_r$ (see the proof of Lemma~4 in \cite{kesten} for the precise definition). We bound the probability of the first two events by $\delta {\mathbb P}(E_{r-3})$. In order to bound the probability of the third event by $C_7 {\mathbb P}(\widetilde E_r)$, we first condition on the innermost open path in $Ann(x_r;\eta 2^r,\sqrt\eta 2^r)$ and on the innermost closed path in $Ann(y_r;\eta 2^r,\sqrt\eta 2^r)$ defined by $D_r$; we then repeat Kesten's proof of (2.42).
We refer the interested reader to the proof of Lemma~4 in \cite{kesten} for more details.

Along with \eqref{eq:peeling1}, we use one more inequality.  It is an application of the RSW theorem and the generalized FKG inequality (Lemma~3 in \cite{kesten}): there exists a constant $C_8$ that does not depend on $r$ or the $\gamma_i$'s such that
\begin{equation}\label{eq:peeling2}
{\mathbb P}(\widetilde E_r) \geq C_8 {\mathbb P}(\widetilde E_{r-3}).
\end{equation}
Inequalities \eqref{eq:peeling1} and \eqref{eq:peeling2} imply that there exists a constant $C_9$ that does not depend on $r$ or on the $\gamma_i$'s such that
${\mathbb P}(E_r) \leq C_9 {\mathbb P}(\widetilde E_r)$.
This inequality proves arm separation on the outer boundary of the annulus. Similar ideas apply to obtain an arm separation result on the inner boundary of the same annulus (see Lemma~5 in \cite{kesten}) and to prove (\ref{eqArmSeparationAnnuli}).

\end{proof}

\section{Proof of Theorem~\ref{thm4IIC}}\label{4IICsec}
\noindent

We prove only the first statement; the proof of the second is similar (see, e.g., the proof of the second statement in \cite[Theorem~3]{Jarai}).  We follow the same method used in \cite[Theorem 3]{Jarai} but because many difficulties arise, we present details of the entire proof.  Pick an edge $e = \langle e_x,e_y \rangle$, and let $n = 2|e|/3$ (this choice makes $e$ in the middle of $Ann(n,2n)$).
Let $\epsilon > 0$.

\bigskip
\textit{Step 1}.  First we give a lower bound for the probability that $e \in {\cal O}$.
Recall the definition of $p_n$ from Section~\ref{secCL} and the definition of $p_n(k)$ from (\ref{pdefgen}).
The constant $C_*$ will be determined later.  Consider the event $D_e$ that

\begin{enumerate}

\item there exist $p_c$-open circuits around the origin in the annuli $Ann(n/2,n)$ and $Ann(2n,4n)$;

\item there exist two disjoint $p_c$-open paths, one connecting $y$ to the circuit  in $Ann(2n,4n)$ and one connecting $x$ to the circuit in $Ann(n/2,n)$;

\item there exists a $(2p_n-p_c)$-closed dual circuit with one defect around 0 in the annulus $Ann(n,2n)^*$ which includes the edge $e^*$ as its defect;

\item $\tau_e \in [p_n,2p_n-p_c)$; and

\item the $p_c$-open circuit in $Ann(2n,4n)$ is connected to $\infty$ by a $p_n$-open path.

\end{enumerate}
An illustration of this event is in Figure \ref{Defig}.

\begin{figure}
\begin{center}
\scalebox{1}{\includegraphics[viewport = 0in 0in 3in 2.5in]{./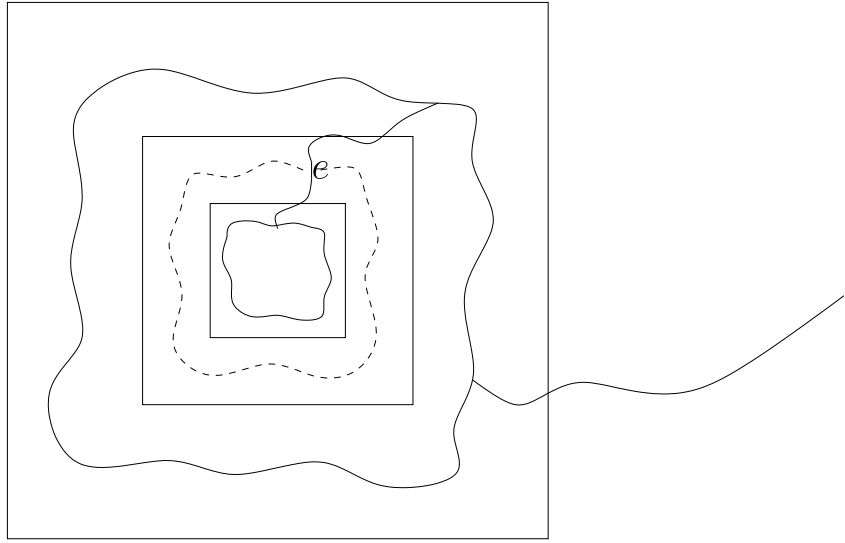}}
\end{center}
\label{Defig}
\caption{The event $D_e$.  The solid curves represent occupied paths and the dotted curves represent vacant paths.  The outer circuit is connected to $\infty$ by a $p_n$-open path.  The boxes, from smallest to largest, are $B(n), B(2n),$ and $B(4n)$.  The edge $e$ is an outlet of the invasion.}
\end{figure}
By RSW arguments, \cite[Lemma~6.3]{DSV} and the fact that $L(2p_n-p_c)$ is comparable with $n$ (see e.g. \cite[(4.35)]{kesten}),
\begin{equation}
{\P}(D_e) \asymp (p_n - p_c) {\P}_{cr}(A_n^{2,2}),
\end{equation}
where $A_n^{2,2}$ is the event that the edge $e-e_x$ (we recall this notation means $\langle 0, e_y-e_x \rangle$) is connected to $\partial B(n)$ by two disjoint $p_c$-open paths and $(e-e_x)^*$ is connected to $\partial B(n)^*$ by two disjoint $p_c$-closed dual paths such that the open and closed paths alternate.  Since $D_e$ implies that $e \in {\cal O}$, we have for all $e$,

\begin{equation}
\label{backbonelowerbdeq}
{\P}(e \in {\cal O}) \geq C_1 (p_n-p_c){\P}_{cr}(A_n^{2,2}).
\end{equation}

\bigskip
\textit{Step 2}.  Recall the definition of an open circuit with defects from Section~\ref{IICsec}.  We will now show that the event
\[ A_{N,M}(e_x, p_c) = \{ \textrm{there is a } p_c \textrm{-open circuit with 2 defects around } e_x \textrm{ in } Ann(e_x, N,M) \} \]
has ${\P}(A_{N,M}(e_x, p_c), \theta_eE ~|~e \in {\cal O})$ close to ${\P}(\theta_eE 
~|~ e \in {\cal O})$ for certain values of $N<M$.
To this end, recall the definition of the event $H_{n,k}$ in (\ref{Hnkdefeq}) and write $H$ for the event $H_{n,1}$.  By (\ref{expdecay}) and (\ref{backbonelowerbdeq}), we can choose $C_*$ independent of $n$ such that

\begin{equation}
\label{approx1eq}
{\P}(\theta_eE, H^c ~|~ e \in {\cal O}) < \epsilon.
\end{equation}
When the event $H$ occurs, the invasion enters the $p_n(1)$-open infinite cluster \textit{before} it reaches $e$.  Hence if $e$ is an outlet, then $e$ must be connected to $\partial B(e_x,n/4)$ by two disjoint $p_n(1)$-open paths and $e^*$ must be connected to $\partial B(e_x,n/4)^*$ by two disjoint $p_c$-closed dual paths such that the open and closed paths alternate and are all disjoint.  Also, the weight $\tau_e$ must be in the interval $[p_c,p_n(1)]$.  If, in addition, the event $A_{N,M}(e_x,p_c)$ does not occur, then there must be yet another $p_c$-closed dual path from $B(e_x,N)^*$ to $\partial B(e_x,M)^*$.  This crossing has the property that it is disjoint from the two $p_c$-closed paths which are already present; however, it does not need to be disjoint from the $p_n(1)$-open crossings.  Therefore,

\[ {\P}(\theta_eE, H, A_{N,M}(e_x,p_c)^c, e \in {\cal O}) \]
\[ \leq C_2(p_n(1)-p_c){\P}(A_n^{2,2}(p_n(1),p_c)){\P}(A_{N,M}^{2,3*}(p_n(1),p_c) ~|~ A_{N,M}^{2,2}(p_n(1),p_c)), \]
where $A_{N,M}^{2,2}(p,q)$ denotes the event that $B(N)$ is connected to $\partial B(M)$ by two $p$-open paths and that $B(N)^*$ is connected to $\partial B(M)^*$ by two $q$-closed paths so that the open and closed paths alternate and are all disjoint.  The symbol $A_{N,M}^{2,3*}(p,q)$ signifies the event that $A_{N,M}^{2,2}(p,q)$ occurs but that there is an additional $q$-closed path connecting $B(N)$ to $\partial B(M)$ which is disjoint from the two other $q$-closed paths but not necessarily from the two $p$-open paths.  The above inequality, along with the estimate (\ref{backbonelowerbdeq}), gives

\[ {\P}(\theta_eE, H, A_{N,M}(e_x,p_c)^c ~|~ e \in {\cal O}) \]
\[ \leq \frac{C_2 (p_n(1)-p_c){\P}(A_n^{2,2}(p_n(1),p_c))}{C_1 (p_n-p_c){\P}_{cr}(A_n^{2,2})} {\P}(A_{N,M}^{2,3*}(p_n(1),p_c) ~|~ A_{N,M}^{2,2}(p_n(1),p_c)). \]
From (\ref{ineqKesten}) and Lemma~6.3 in \cite{DSV}, we can deduce

\begin{equation}
\label{bigcompeq}
\frac{C_2 (p_n(1)-p_c){\P}(A_n^{2,2}(p_n(1),p_c))}{C_1 (p_n-p_c){\P}_{cr}(A_n^{2,2})} \leq C_3 (C_* \log n)^2,
\end{equation}
so that

\[ {\P}(\theta_eE, H, A_{N,M}(e_x,p_c)^c ~|~ e \in {\cal O}) \]
\begin{equation}
\label{firstQeq}
\leq C_3 (C_*\log n)^2 {\P}(A_{N,M}^{2,3*}(p_n(1),p_c) ~|~ A_{N,M}^{2,2}(p_n(1),p_c)).
\end{equation}
The above can be made less than $\epsilon$ provided that $M/N$ grows fast enough with $n$.  Let us assume this for the moment; we shall choose precise values for $M$ and $N$ at the end of the proof.  Therefore, using (\ref{approx1eq}), we have

\begin{equation}
\label{approxeq1}
|{\P}(\theta_eE ~|~ e \in {\cal O}) - {\P}(\theta_eE, H, A_{N,M}(e_x,p_c) ~|~ e \in {\cal O})| < 2\epsilon.
\end{equation}

\bigskip
\textit{Step 3}.  We now condition on the outermost $p_c$-open circuit with 2 defects in $Ann(e_x,N,M)$.  For any circuit ${\cal C}$ with 2 defects around the origin in the annulus $Ann(N,M)$, let $D({\cal C})$ be the event that it is the outermost $p_c$-open circuit with 2 defects.  Notice that $D({\cal C})$ depends only on the state of edges on or outside ${\cal C}$.
For distinct ${\cal C}$, ${\cal C}'$ (i.e. the sets of edges in ${\cal C}$ and ${\cal C}'$ are different or the sets of edges in ${\cal C}$ and ${\cal C}'$ are the same but the defects are different), the events $D({\cal C}), D({\cal C}')$ are disjoint.  Therefore, the second term of (\ref{approxeq1}) is equal to

\begin{equation}
\label{circuitconditioneq}
\frac{1}{{\P}(e \in {\cal O})}\sum_{{\cal C} \subset Ann(N,M)} {\P}(\theta_eE, H, \theta_eD({\cal C}), e \in {\cal O}),
\end{equation}
where it is implied that in the sum, and in future sums like it, we only use circuits which enclose the origin.

\bigskip
\textit{Step 4}.  Let
\[ Q(\theta_e{\cal C}) = \{ \textrm{there exists } f \neq e \textrm{ interior to } \theta_e{\cal C} \textrm{ with } \tau_f \in [p_c, p_n(1)] \}. \]
We will now show that with high probability, the event $Q(\theta_e{\cal C})$ does not occur.  In other words, we will bound the probability of the event $\{H, Q(\theta_e{\cal C}), \theta_eD({\cal C}),e \in {\cal O}\}$.  Supposing that this event occurs, then both $\tau_e \in [p_c,p_n(1))$ and $A_{M,n}^{2,2}(p_n(1),p_c)$ must occur.  Notice that the events $A_{M,n}^{2,2}(p_n(1),p_c)$, $\theta_eD({\cal C})$, $\{\tau_e \in [p_c,p_n(1)) \}$, and $Q(\theta_e{\cal C})$ are all independent.   Hence ${\P}(H,Q(\theta_e{\cal C}), \theta_eD({\cal C}),e \in {\cal O})$ is at most

\[ {\P}(A_{M,n}^{2,2}(p_n(1),p_c)) {\P}(\theta_eD({\cal C})) {\P}(Q(\theta_e{\cal C})) {\P}(\tau_e \in [p_c,p_n(1))) \]
\[ \leq C_4 M^2 \frac{(p_n(1) - p_c)^2}{{\P}(A_M^{2,2}(p_n(1),p_c))} {\P}(A_n^{2,2}(p_n(1),p_c)) {\P}(\theta_eD({\cal C})),\]
where in the last inequality we used Corollary~6.1 from \cite{DSV}.
Consequently,

\[ {\P}(H,Q(\theta_e{\cal C}), \theta_eD({\cal C}) | e \in {\cal O}) \leq \left[ \frac{C_4 M^2 (p_n(1) - p_c)}{{\P}_{p_n(1)}(A_M^{2,2})} \right] \left[ \frac{(p_n(1)-p_c){\P}(A_n^{2,2}(p_n(1),p_c))}{C_1 (p_n-p_c){\P}_{cr}(A_n^{2,2})} \right] {\P}(\theta_eD({\cal C})),\]
which, by (\ref{bigcompeq}), is at most

\[ \frac{C_5 (C_* \log n)^2 M^2}{{\P}_{p_n(1)}(A_M^{2,2})} (p_n(1)-p_c){\P}(\theta_eD({\cal C})). \]
As long as $M$ is not too big, from ${\P}_{p_n(1)}(A_M^{2,2}) \asymp {\P}_{cr}(A_M^{2,2}) \geq c M^{-2}$ (see, e.g., Theorem~24 and Theorem~27 in \cite{Nolin}), we get an upper bound of

\begin{equation}
\label{fourthQeq}
C_6 (C_* \log n)^2 M^4 (p_n(1) - p_c) {\P}(\theta_eD({\cal C})) < \epsilon{\P}(\theta_eD({\cal C})).
\end{equation}
We will be able to choose such an $M$ (in fact it will be of the order of a power of $\log n$), but we delay justification of this to the end of the proof.  We henceforth assume that ${\P}(\theta_eE ~|~ e \in {\cal O})$ is within $3\epsilon$ of

\begin{equation}
\label{Qconditioneq}
\frac{1}{{\P}(e \in {\cal O})}\sum_{{\cal C} \subset Ann(N,M)} {\P}(\theta_eE, H, \theta_eD({\cal C}), Q(\theta_e {\cal C})^c, e \in {\cal O}).
\end{equation}

\bigskip
\textit{Step 5}.  We write our configuration $\omega$ as $\eta \oplus \xi$, where $\eta$ is the configuration outside or on $\theta_e{\cal C}$ and $\xi$ is the configuration inside $\theta_e{\cal C}$.  We condition on both $\eta$ and $\tau_e$: the summand of the numerator in (\ref{Qconditioneq}) becomes

\begin{equation}
\label{tauconditioneq}
{\E}\left[ {\P}(\theta_eE, H, \theta_eD({\cal C}), Q(\theta_e{\cal C})^c, e \in {\cal O} ~|~ \tau_e, \eta) \right].
\end{equation}

Call the defected dual edges in $\theta_e{\cal C}$ $e_1^*$ and $e_2^*$.  Given the value of $\tau_e$, on the event $\theta_eD({\cal C}) \cap H \cap Q(\theta_e{\cal C})^c$, the event $\{e \in {\cal O}\}$ occurs if and only if all of the following occur:

\begin{enumerate}

\item $e$ is connected to $\theta_e{\cal C} \setminus \{e_1^*,e_2^*\}$ in the interior of $\theta_e{\cal C}$ by two disjoint $p_c$-open paths;

\item $e^*$ is connected to $\{e_1^*,e_2^*\}$ in the interior of $\theta_e{\cal C}$ by two disjoint $p_c$-closed dual paths so that the $p_c$-closed paths and the $p_c$-open paths from item 1 alternate and are disjoint;

\item outside of $\theta_e{\cal C}$, $\theta_e{\cal C}$ is connected by a $\tau_e$-open path to $\infty$;

\item $\tau_e \in [p_c,p_n(1))$; and

\item there exists a $\tau_e$-closed dual path $P$ outside of $\theta_e{\cal C}$, connecting $e_1^*$ to $e_2^*$ (both of which are $\tau_e$-closed) with the following properties:

\begin{enumerate}

\item $P \cup B(e_x,M)^*$ contains a circuit around the origin; and

\item the invasion graph contains a vertex from ${\cal C}$ before it contains an edge $f$ with $f^*$ from $P$.

\end{enumerate}

\end{enumerate}

\begin{figure}
\begin{center}
\scalebox{1}{\includegraphics*[viewport = 0in 0in 5in 3.3in]{./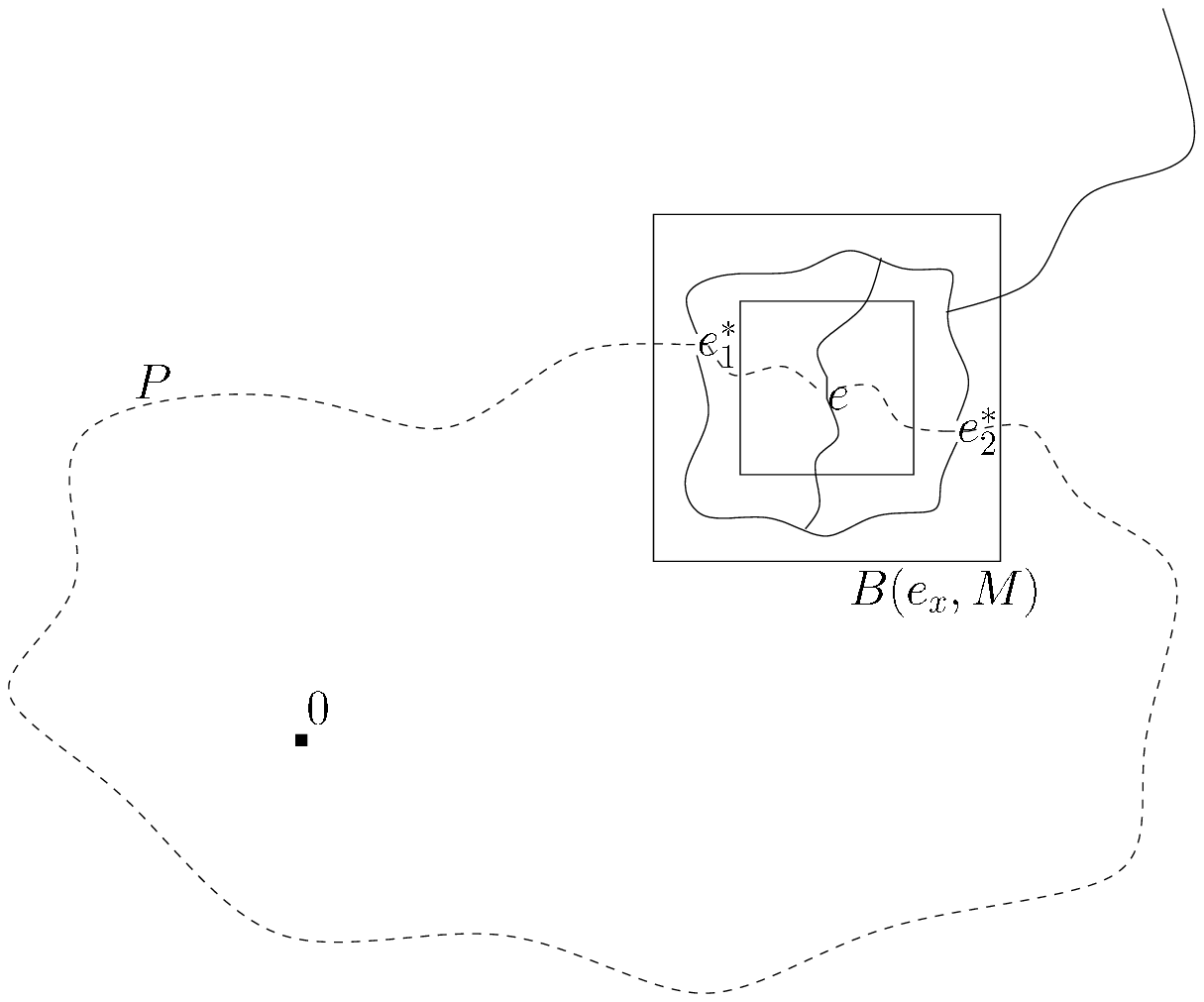}}
\end{center}
\caption{The edge $e$ is connected to the circuit $\theta_e{\cal C}$ by two $\tau_e$-open paths (the solid lines) and two $p_c$-closed paths (the dotted lines).  The outer dotted circuit represents the $\tau_e$-closed path $P$ and the circuit $\theta_e{\cal C}$ is connected to $\infty$ by a $\tau_e$-open path.  It is assumed that the invasion from the origin touches $\theta_e{\cal C}$ before it touches $P$.}
\label{fig2}
\end{figure}

We will denote by $e \leftrightarrow_{2,2,p_c} \theta_e{\cal C}$ the event that the first two events occur, we will denote by $\theta_e{\cal C} \leftrightarrow_{\tau_e} \infty$ the third event, and we will use the symbol $X({\cal C})$ for the fifth event.  See Figure \ref{fig2} for an illustration of the intersection of these events.  The term (\ref{tauconditioneq}) becomes
\begin{equation}
\label{bigeq}
{\E}\left[ {\P}(\theta_eE, H, \theta_eD({\cal C}), e \leftrightarrow_{2,2,p_c} \theta_e{\cal C}, \theta_e{\cal C} \leftrightarrow_{\tau_e} \infty, Q(\theta_e{\cal C})^c, \tau_e \in [p_c,p_n(1)), X({\cal C}) ~|~ \tau_e, \eta) \right].
\end{equation}
On the event $\left\{ e \leftrightarrow_{2,2,p_c} \theta_e{\cal C} \right\} \cap \left\{ \theta_e{\cal C} \leftrightarrow_{\tau_e} \infty \right\} \cap \tau_e \in [p_c,p_n(1))$, the event $H$ occurs if and only if there exists a $p_n(1)$-open circuit ${\cal C}_0$ enclosing the origin in $Ann(n/4,n/2)$ and either one of the following occur:

\[ {\cal C}_0 \stackrel{p_n(1)}\longleftrightarrow \theta_e{\cal C}, ~~\textrm{ or }~~ {\cal C}_0 \stackrel{p_n(1)}\longleftrightarrow \infty \textrm{ outside of } \theta_e{\cal C}. \]
Denote by $Y$ the event that such a circuit ${\cal C}_0$ exists and that either one of the above occur.  Note that $Y$ is measurable with respect to $\eta$.  The term (\ref{bigeq}) becomes
\[ {\E}\left[ {\P}(\theta_eE, Y, \theta_eD({\cal C}), e \leftrightarrow_{2,2,p_c} \theta_e{\cal C}, \theta_e{\cal C} \leftrightarrow_{\tau_e} \infty, Q(\theta_e {\cal C})^c, \tau_e \in [p_c,p_n(1)), X({\cal C}) ~|~ \tau_e, \eta) \right] \]
\begin{equation}
\label{decoupleeq}
= {\E}\left[ 1_Y 1_{\theta_eD({\cal C})} 1_{\theta_e{\cal C} \leftrightarrow_{\tau_e} \infty} 1_{\tau_e \in [p_c,p_n(1))} 1_{X({\cal C})} {\P} (\theta_eE, e \leftrightarrow_{2,2,p_c} \theta_e{\cal C}, Q(\theta_e{\cal C})^c ~|~ \tau_e, \eta) \right].
\end{equation}

We now inspect the inner conditional probability.  Clearly we have
\begin{equation}
\label{towardIICeq}
{\P}(\theta_eE, e \leftrightarrow_{2,2,p_c} \theta_e{\cal C}, Q(\theta_e{\cal C})^c ~|~ \tau_e, \eta) \leq {\P}(\theta_eE, e \leftrightarrow_{2,2,p_c}\theta_e{\cal C} ~|~ \tau_e,\eta)
\end{equation}
\[ \leq {\P}(\theta_eE, e \leftrightarrow_{2,2,p_c} \theta_e{\cal C}, Q(\theta_e{\cal C})^c ~|~ \tau_e, \eta) + {\P}(Q(\theta_e{\cal C})). \]
Using arguments similar to those that led to (\ref{Qconditioneq}), one can show that the same choice of $M$ and $N$ that will make (\ref{fourthQeq}) hold will also make

\[ \frac{1}{{\PP}(e \in {\cal O})} \sum_{{\cal C} \subset Ann(N,M)}{\E}\left[ 1_Y 1_{\theta_eD({\cal C})} 1_{\theta_e{\cal C} \leftrightarrow_{\tau_e} \infty} 1_{\tau_e \in [p_c,p_n(1))} 1_{X({\cal C})} {\P} (Q(\theta_e{\cal C})) \right] < \epsilon. \]
Therefore we conclude from (\ref{towardIICeq}) that ${\PP}(\theta_eE~|~e\in {\cal O})$ is within $4 \epsilon$ of 

\begin{equation}
\label{tempeq3}
\frac{1}{{\mathbb P}(e\in {\mathcal O})} \sum_{{\mathcal C}\subset Ann(N,M)} {\E}\left[ 1_Y 1_{\theta_eD({\cal C})} 1_{\theta_e{\cal C} \leftrightarrow_{\tau_e} \infty} 1_{\tau_e \in [p_c,p_n(1))} 1_{X({\cal C})} {\PP}(\theta_eE, e\leftrightarrow_{2,2,p_c} \theta_e{\cal C}~|~\tau_e, \eta) \right] .
\end{equation}

\bigskip
\textit{Step 6}.  Notice that since the events $\theta_eE$ and $e \leftrightarrow_{2,2,p_c} \theta_e {\cal C}$ do not depend on $\tau_e$ or on $\eta$, we have

\begin{equation}
\label{tempeq4}
{\P}(\theta_eE, e\leftrightarrow_{2,2,p_c}\theta_e{\cal C}~|~\tau_e, \eta) = {\P}_{cr}(E, 0 \leftrightarrow_{2,2} {\cal C})~\textrm{a.s.},
\end{equation}
where $0 \leftrightarrow_{2,2} {\cal C}$ denotes the event that the edge $e-e_x$ is connected to ${\cal C}$ by two open paths and the dual edge $(e-e_x)^*$ is connected to $\{(e_1-e_x)^*, (e_2-e_x)^*\}$ by two closed paths such that all of these connections occur inside ${\cal C}$ and the open and closed paths alternate.  The quantity ${\P}_{cr}(E | 0 \leftrightarrow_{2,2} {\cal C})$ from the right side of (\ref{tempeq4}) approaches $\nu^{2,2} (E)$ as long as $N \to \infty$ as $|e| \to \infty$ (this is a slight extension of Theorem~\ref{thmIIC}) so, assuming this growth on $N$, we have

\begin{equation}
\label{IICineq}
\frac{1}{(1+\epsilon)}{\P}_{cr}(E, 0 \leftrightarrow_{2,2} {\cal C}) \leq \nu^{2,2} (E) {\P}_{cr}(0 \leftrightarrow_{2,2} {\cal C})
\end{equation}

\[ \leq \frac{1}{(1-\epsilon)}{\P}_{cr}(E, 0 \leftrightarrow_{2,2} {\cal C}). \]

We will now show how to complete the proof; directly afterward, we will show how to make a correct choice of $M$ and $N$.  For convenience, let us define $R$ to be the term which comprises the entire line of  (\ref{tempeq3}) and let $S$ be the same term with the symbol $\theta_eE$ omitted.  Using the estimate from (\ref{IICineq}) and equation (\ref{tempeq4}), we get

\begin{equation}
\label{finalesteq}
(1-\epsilon)S \nu \leq R \leq (1+\epsilon)S \nu,
\end{equation}
where we write $\nu$ for $\nu^{2,2}(E)$.  By (\ref{tempeq3}) and the line of text above it, applied to the sure event ($E=\Omega$), we have

\[ 1-4\epsilon < S < 1+4\epsilon.\]
Combining this with (\ref{finalesteq}) gives

\[ (1-\epsilon)(1-4\epsilon) \nu < R < (1+\epsilon)(1+4\epsilon) \nu, \]
and the middle term is within $4\epsilon$ of ${\P}(\theta_eE ~|~ e \in {\cal O})$.  The proof is complete once we make a suitable choice of $M$ and $N$.

\bigskip
\textit{Step 7: Choice of $M$ and $N$}.  Recall, from (\ref{firstQeq}), that we need the inequality

\begin{equation}
\label{lasteq2}
C_8(C_* \log n)^2 {\P}(A_{N,M}^{2,3*}(p_n(1),p_c) ~|~ A_{N,M}^{2,2}(p_n(1),p_c)) < \epsilon
\end{equation}
to hold.  In addition, we need to satisfy (\ref{fourthQeq}).  Using the facts that ${\P}_{cr} (A_M^{2,2}) \geq C_9/M^2$ and

\begin{equation}
\label{lasteq1}
{\P}(A_{N,M}^{2,3*}(p_n(1),p_c) ~|~ A_{N,M}^{2,2}(p_n(1),p_c)) < C_{10} (\frac{N}{M})^{\beta}
\end{equation}
 for some $\beta>0$ (which is easily proved for what will be our choice of $M$ and $N$, and which we assume for the moment), the reader may check that a choice of

\[ N = \log n \textrm{ , } M = (\log n)^{2+2/\beta} \]
satisfies these two conditions for $n$ large.  The reason that this choice satisfies (\ref{fourthQeq}) is that $(\log n)^{\gamma} (p_n(1) - p_c) \to 0$ for any $\gamma$ (use (\ref{ineqKesten}) and the fact that the 4-arm exponent is strictly smaller than 2 (see, e.g., Section~6.4 in \cite{Werner})).

We now prove (\ref{lasteq1}).  Let $Q(M)$ be the event that there exists an edge in $B(M)$ which has weight in the interval $[p_c, p_n(1))$.  If $Q(M)$ does not occur then the event $A_{N,M}^{2,3*}(p_n(1),p_c)$ implies the event $A_{N,M}^{2,3}(p_n(1),p_c)$ (i.e. the same event but with all five paths disjoint).  Therefore, by Reimer's inequality,

\[ {\P}(A_{N,M}^{2,3*}(p_n(1),p_c)) \leq {\P}(A_{N,M}^{2,3}(p_n(1),p_c)) + {\P}(Q(M)) \]

\begin{equation}
\label{lasteq3}
\leq {\P}(A_{N,M}^{2,2}(p_n(1),p_c)){\P}_{cr}(A_{N,M}^{0,1}) + |B(M)|(p_n(1)-p_c),
\end{equation}
where $A_{N,M}^{0,1}$ is the event that $B(N)$ is connected to $\partial B(M)$ by a $p_c$-closed path.  Putting this estimate into (\ref{lasteq1}), the term on its left is at most

\[ {\P}_{cr}(A_{N,M}^{0,1}) + |B(M)|\frac{p_n(1)-p_c}{{\P}(A_{N,M}^{2,2}(p_n(1),p_c))}. \]
Using the fact that

\[ {\P}(A_{N,M}^{2,2}(p_n(1),p_c)) \geq \frac{{\P}(A_{N,M}^{2,3}(p_n(1),p_c))}{{\P}_{cr}(A_{N,M}^{0,1})} \geq \frac{C_{11}N^2}{M^2{\P}_{cr}(A_{N,M}^{0,1})}, \]
we see that the term (\ref{lasteq3}) is at most

\[ {\P}_{cr}(A_{N,M}^{0,1})\left[ 1+ \frac{C_{12}M^4}{N^2} (p_n(1)-p_c) \right] \leq 2{\P}_{cr}(A_{N,M}^{1,0}) \leq C_{13} \left( \frac{N}{M} \right) ^{\beta} \]
for some $\beta>0$, as we have chosen $N$ and $M$ on the order of $\log n$.  This shows (\ref{lasteq1}) and completes the proof.

\bigskip
\textbf{Acknowledgments.}  We would like to thank C. Newman for suggesting some of these problems.  We thank R. van den Berg and C. Newman for helpful discussions.  We also thank G. Pete for discussions related to arm-separation statements for multiple-armed IIC's.

\end{document}